\declaretheorem[name=Theorem,refname={Theorem,Theorems},Refname={Theorem,Theorems},numberwithin=section,]{theorem}
\declaretheorem[name=Proposition,refname={Proposition,Propositions},Refname={Proposition,Propositions},sibling=theorem,]{proposition}
\declaretheorem[name=Lemma,refname={Lemma,Lemmas},Refname={Lemma,Lemmas},sibling=theorem,]{lemma}
\declaretheorem[name=Corollary,refname={Corollary,Corollaries},Refname={Corollary,Corollaries},sibling=theorem,]{corollary}
\declaretheoremstyle[spaceabove=2pt, spacebelow=2pt,
bodyfont=\normalfont\itshape,
postheadspace=1em
]{claimnonum}
\declaretheorem[name=Claim,numbered=no,style=claimnonum]{claim*}
\declaretheorem[name=Definition,refname={Definition,Definitions},Refname={Definition,Definitions},sibling=theorem,style=definition,]{definition}
\declaretheorem[name=Definition,numbered=no,style=definition,]{definition*}
\declaretheorem[name=Notation,refname={Notation,Notations},Refname={Notation,Notations},sibling=theorem,style=definition,]{notation}
\declaretheorem[name=Example,refname={Example,Examples},Refname={Example,Examples},sibling=theorem,style=definition,]{example}
\declaretheorem[name=Examples,refname={Examples,Examples},Refname={Examples,Examples},sibling=theorem,style=definition,]{examples}
\declaretheorem[name=Remark,refname={Remark,Remarks},Refname={Remark,Remarks},sibling=theorem,style=definition,]{remark}
\renewcommand{\O}{\mathcal{O}}
\newcommand{\K}{\mathcal{K}}
\newcommand{\op}{\mathrm{op}}
\newcommand{\cat}[1]{\mathsf{#1}}
\newcommand{\ScottOp}{\mathrm{ScottOp}}
\newcommand{\ScottOpFilt}{\mathrm{ScottOpFilt}}
\newcommand{\Op}{\mathrm{Op}}
\newcommand{\KSat}{\mathrm{KSat}}
\newcommand{\ko}{\vartriangleleft}
\newcommand{\ok}{\blacktriangleleft}
\newcommand{\black}{\color{black}}
\newcommand{\F}{\mathcal{F}}
\newcommand{\I}{\mathcal{I}}
\newcommand{\B}{\mathfrak{B}}
\newcommand{\restr}[2]{#1\restriction_{#2}}  
\newcommand{\Up}{\mathrm{Up}}
\newlength\arrowheight
\DeclareRobustCommand{\doubleuparrow}{%
	\mathrel{\ThisStyle{%
			\settoheight{\arrowheight}{$\SavedStyle\uparrow$}%
			\scalerel*{\stackengine{.3\arrowheight}{$\SavedStyle\uparrow$}%
				{$\SavedStyle\uparrow$}{O}{c}{F}{F}{L}}{\uparrow}%
	}}%
}
\DeclareRobustCommand{\doubledownarrow}{%
	\mathrel{\ThisStyle{%
			\settoheight{\arrowheight}{$\SavedStyle\downarrow$}%
			\scalerel*{\stackengine{.3\arrowheight}{$\SavedStyle\downarrow$}%
				{$\SavedStyle\downarrow$}{O}{c}{F}{F}{L}}{\downarrow}%
	}}%
}
\renewcommand{\d}{{\downarrow}}
\renewcommand{\u}{{\uparrow}}
\newcommand{\uo}{{\uparrow_\O}}
\newcommand{\dk}{{\downarrow_\K}}
\newcommand{\uuk}{{\doubleuparrow_\K}}
\newcommand{\ddo}{{\doubledownarrow_\O}}
\newcommand{\R}{\mathbb{R}}
\newcommand{\Z}{\mathbb{Z}}
\newcommand{\Q}{\mathbb{Q}}
\newcommand{\N}{\mathbb{N}}
\newcommand{\rel}{\looparrowright}
\renewcommand{\r}[1]{\mathrel{#1}}
\newcommand{\KO}{\cat{KO}}
\newcommand{\Bidcpo}{\cat{BiDcpo}}
\newcommand{\DBidcpo}{\cat{DBiDcpo}}
\newcommand{\EBidcpo}{\cat{EBiDcpo}}
\newcommand{\DEBidcpo}{\cat{DEBiDcpo}}
\newcommand{\CP}{\mathrm{CP}}
\DeclareRobustCommand{\neswarrow}{%
  \mathrel{\text{\ooalign{$\swarrow$\crcr$\nearrow$}}}%
}
\newcommand{\ub}{\mathrm{ub}}
\newcommand{\lb}{\mathrm{lb}}
\newcommand{\im}{\mathrm{im}}
\def\l@subsection{\@tocline{2}{0pt}{2pc}{6pc}{}} 
\DeclareRobustCommand{\SkipTocEntry}[5]{} 
\title{On the symmetry behind duality}
\keywords{self-duality, de Groot self-duality, Lawson self-duality, sober spaces, spatial frames, stably compact spaces, continuous domains, bitopological spaces, d-frames, formal concept analysis, polarities, double base lattices, dcpos, compactness, Stone duality, categorical duality, locally compact frames, pointfree topology, frames, locales, ko-spaces, bi-dcpos}
\subjclass[2020]{Primary: 54E55. Secondary: 06D50, 06D22, 06E15, 06F30, 06B35, 06D05}
\author{Marco Abbadini}
\address[Marco Abbadini]{School of Computer Science,
University of Birmingham,
B15 2TT Birmingham (UK)}
\email{m.abbadini@bham.ac.uk}
\author{Achim Jung}
\address[Achim Jung]{School of Computer Science,
University of Birmingham,
B15 2TT Birmingham (UK)}
\email{A.Jung@bham.ac.uk}
\begin{document}

\begin{abstract}
	Dualities such as Stone duality and the duality between sober spaces and spatial frames hinge on an interaction between open sets and compact saturated sets.
In several important classes of spaces—Stone spaces, spectral spaces, and stably compact spaces—this interaction forms a perfect symmetry, reflected dually as order self-duality.
But the class of sober spaces, despite being central to Stone-like dualities, exhibits only a partial symmetry between openness and compactness.

This raises a central question:
can we enlarge the setting enough to recover a perfect symmetry, while still retaining sober spaces and preserving the conditions that make the sober–spatial-frame duality work?

We answer this question affirmatively.
We introduce \emph{ko-spaces}, whose families of open and compact saturated sets satisfy the compatibility needed for duality, and \emph{bi-dcpos}, a pointfree companion generalizing both spatial frames and continuous domains.
We prove that the categories of ko-spaces and distributive bi-dcpos are equivalent (and dually equivalent, too), and that each category carries a symmetry in the form of a self-duality.
On spaces, this extends de Groot duality; on domains, it extends Lawson duality.

Classical results fall out as special cases: the sober–spatial-frame duality reappears inside our symmetric framework, and continuous domains acquire a presentation akin to that of d-frames.
Our work suggests that an appropriate home for Stone-like duality is a fully symmetric two-sorted world in which openness and compactness play on equal footing.
\end{abstract}

\maketitle

\tableofcontents


\section{Introduction}\label{s:introduction}

Categorical dualities provide a bridge between algebra (the syntax) and topology (the semantics), offering a double perspective on the same problem.
For example, Stone's dualities for Boolean algebras and bounded distributive lattices \cite{Stone1936,Stone1938} allow for a representation of these algebras as algebras of certain subsets of certain topological spaces. Similarly, the duality between sober spaces and spatial frames allows one to replace a topological space by the poset of its open sets.

At the heart of these dualities lies the interaction between open sets and compact sets.
For example, in Stone's duality for bounded distributive lattices, a bounded distributive lattice is represented as the lattice of \emph{compact open} subsets of a certain space.
The interaction between open sets and compact sets is what typically allows the duality to take place; for example, a famous sufficient condition for a frame to be spatial is local compactness; and the frames that are spatial are precisely those that have enough Scott-open filters, which, roughly speaking, means those that have enough compact subspaces.

As emphasized by Jung and S\"underhauf \cite{JungSuenderhauf1996}, the spaces appearing in many dualities exhibit an intriguing form of symmetry between openness and compactness.
To illustrate this, let us recall that a poset $P$ is \emph{directed} if it is nonempty and every pair of its elements has a common upper bound. The following fact holds in every topological space:
 \begin{equation*}
  \tag{K}\label{eq:K}
  \parbox{\dimexpr\linewidth-4em}{%
    \strut
    For every compact set $K$, for every directed set $\I$ of open sets with $K \subseteq \bigcup \I$ there is $U \in \I$ such that $K \subseteq U$;%
    \strut
  }
\end{equation*}
In fact, the condition ``for every directed set $\I$ of open sets with $K \subseteq \bigcup \I$ there is $U \in \I$ such that $K \subseteq U$'' is equivalent to $K$ being compact \cite[Prop.~4.4.7]{GoubaultLarrecq2013}.
Let us now recall that a poset $P$ is \emph{codirected} if it is nonempty and every pair of elements has a common lower bound.
A statement symmetrical to \eqref{eq:K} holds in every Hausdorff space \cite[Exerc.~4.4.18]{GoubaultLarrecq2013}:
\begin{equation*}
  \tag{O}\label{eq:O}
  \parbox{\dimexpr\linewidth-4em}{%
    \strut
    For every open set $U$, for every codirected set $\F$ of compact sets with $\bigcap \F \subseteq U$ there is $K \in \F$ such that $K \subseteq U$.%
    \strut
  }
\end{equation*}
Note that the statements \eqref{eq:K} and \eqref{eq:O} are obtained one from the other by swapping the words ``compact'' with ``open'' and reversing all order-theoretic notions ($\subseteq$ with $\supseteq$, directed with codirected, $\bigcap$ with $\bigcup$).

The symmetry between open and compact sets can be observed also in the notion of local compactness, one of whose equivalent formulations for Hausdorff spaces is (see e.g.\ \cite[Prop.~4.8.14]{GoubaultLarrecq2013} or \cite[Lem.~7.3]{Erne2007}):
\begin{quote}
	For every compact set $K$ and open set $U$ such that $K \subseteq U$, there are an open set $U'$ and a compact set $K'$ such that $K \subseteq U' \subseteq K' \subseteq U$.
\end{quote}
\begin{center}
	\begin{tikzpicture}
	
	  \draw[thick, NavyBlue] (0,0) circle (1.9);
	  \node[NavyBlue] at (1.6,1.6) {$U$};
	
	  \draw[thick, gray, dashed] (-1.1,-1.1) rectangle (1.1,1.1);
	  \node[gray] at (1.35,0.9) {$K'$};
	
	  \draw[thick, dashed, NavyBlue!50, dashed] (0,0) circle (0.8);
	  \node[NavyBlue!50] at (0.76,0.76) {$U'$};
	
	  \draw[thick] (-0.2,-0.2) rectangle (0.3,0.3);
	  \node at (0.5,0.2) {$K$};
	
	\end{tikzpicture}
\end{center}
This property is self-symmetric: swapping open sets with compact sets and inclusions with reverse inclusions returns the same condition.

For certain classes of spaces, the symmetry between open sets and compact sets is perfect. 
In these cases, one can obtain two results at the price of one: from a statement proved for all spaces in one such class, one deduces also the symmetric statement, obtained by swapping the words ``open'' with ``compact'' and reversing all order-theoretic notions.
A simple example is the class of compact Hausdorff spaces: given a space $(X, \O)$ in this class, the space with the same underlying set $X$ and whose open sets are the complements of the compact subsets of $(X, \O)$ is still in the same class of spaces (in this special case of compact Hausdorff spaces, it is still $(X, \O)$), and doing this process twice gives back $(X, \O)$.
For example, to prove that the following two facts hold in compact Hausdorff spaces, it is sufficient to prove just one of them, since they are perfectly symmetrical.
\begin{enumerate}[label = (W\arabic*), ref= W\arabic*]

	\item \label{i:W1}
	Let $U_1, U_2$ be open sets and $K$ a compact set with $K \subseteq U_1 \cup U_2$.
	There are compact sets $L_1, L_2$ such that $L_1 \subseteq U_1$, $L_2 \subseteq U_2$ and $K \subseteq L_1 \cup L_2$.

	\item \label{i:W2}
	Let $K_1, K_2$ be compact sets and $U$ an open set with $K_1 \cap K_2 \subseteq U$.
	There are open sets $V_1, V_2$ such that $K_1 \subseteq V_1$, $K_2 \subseteq V_2$ and $V_1 \cap V_2 \subseteq U$.
\end{enumerate}

\noindent\begin{minipage}{0.5\textwidth}
	\begin{center}
	
		\begin{tikzpicture}
			
		  \draw[thick, black] (-0.8,0.1) circle (1.2);
		  \node[black] at (-1.9,1.2) {$U_1$};
		
		  \draw[thick, gray, dashed] (-1.3,-0.5) rectangle (0.09,0.7);
		  \node[gray] at (-1.65,0.15) {$\exists L_1$};

		  \draw[thick, black] (0.8,-0.1) circle (1.2);
		  \node[black] at (1.9,-1.2) {$U_2$};
		
		  \draw[thick, gray, dashed] (-0.09,-0.7) rectangle (1.3,0.5);
		  \node[gray] at (1.65,-0.15) {$\exists L_2$};
		  
		  \draw[thick, NavyBlue] (-0.9,-0.4) rectangle (0.9,0.4);
		  \node[NavyBlue] at (1.07,0.3) {$K$};

		\end{tikzpicture}
		
	\end{center}
\end{minipage}%
\begin{minipage}{0.5\textwidth}
	\begin{center}
		\begin{tikzpicture}
			
		  \draw[thick, dashed, gray] (-0.8,0.1) circle (1.2);
		  \node[gray] at (-1.9,1.2) {$\exists V_1$};
		
		  \draw[thick, black] (-1.3,-0.5) rectangle (0.09,0.7);
		  \node[black] at (-1.65,0.15) {$K_1$};

		  \draw[thick, dashed, gray] (0.8,-0.1) circle (1.2);
		  \node[gray] at (1.9,-1.2) {$\exists V_2$};
		
		  \draw[thick, black] (-0.09,-0.7) rectangle (1.3,0.5);
		  \node[black] at (1.65,-0.15) {$K_2$};
		  
		  \draw[thick, NavyBlue] (0,0) circle (1);
		  \node[NavyBlue] at (0.2,1.3) {$U$};

		\end{tikzpicture}
	\end{center}
\end{minipage}

\noindent
 (These facts are related to the so-called Wilker's conditions, called so in \cite[p.~71]{KeimelLawson2005}, with reference to \cite{Wilker1970}.)

In fact, these two symmetric statements hold, more generally, in all locally compact Hausdorff spaces\footnote{Neither of the two statements would be true in all locally compact Hausdorff spaces if ``compact'' were replaced by ``closed'', as witnessed by any non-normal locally compact Hausdorff space, such as the deleted Tychonoff plank \cite[Part~II, 87]{SteenSeebach1978}.}.
However, here comes the subtlety: for locally compact Hausdorff spaces, we cannot deduce one statement from the other anymore!
This is because the class of locally compact Hausdorff spaces lacks a full symmetry between openness and compactness: the set of complements of the compact sets of a locally compact Hausdorff space $X$ may fail to contain the empty set (as $X$ may fail to be compact) and hence to be a topology.
And there is no topological remedy: any class of topological spaces containing all locally compact Hausdorff spaces lacks a full symmetry between openness and compactness.
To get a symmetric setting that covers these classes we shall consider a generalization of topological spaces where at least $\varnothing$ is allowed to be non-open.

We seek a delicate equilibrium.
While on the one hand we need to consider a generalization of topological spaces, on the other hand we cannot generalize too much as otherwise we may be losing some properties of the class of spaces of interest.
(For example, one may be losing Wilker's conditions.)

In this paper, the property we seek to maintain is a Stone-like categorical duality.

As some Stone-like dualities involve non-Hausdorff spaces, it is worth mentioning that, outside the $T_1$ context, ``compact'' should be replaced by ``compact saturated''.
A \emph{saturated} set is an intersection of open sets; equivalently, an upset in the specialization order.
For example, in $\R$ with the upper topology, $[0, 1)$ is compact but not saturated, while its saturation $[0, \infty)$ is both compact and saturated.
In a $T_1$ space every subset is saturated and so the notions of compact and of compact saturated collapse.

If in the statement \eqref{eq:O} above we replace ``compact'' by ``compact saturated'', we get:
\begin{equation*}
  \tag{O'}\label{eq:O'}
  \parbox{\dimexpr\linewidth-4em}{%
    \strut
    For every open set $U$, for every codirected set $\F$ of \emph{compact saturated} sets with $\bigcap \F \subseteq U$, there is $K \in \F$ such that $K \subseteq U$.%
    \strut
  }
\end{equation*}
A topological space satisfying this property is called \emph{well-filtered}.
All sober spaces (of which Hausdorff spaces are particular examples) are well-filtered, and hence sober spaces enjoy a partial symmetry between open and compact saturated sets.
Interestingly, sober spaces are precisely the spaces appearing in the famous duality with spatial frames.

The symmetry between open sets and compact saturated sets is perfect in some classes of spaces enjoying well-behaved dualities: Stone spaces, compact Hausdorff spaces, spectral spaces (which are the spaces appearing in Stone's duality for bounded distributive lattices), and more generally stably compact spaces, which are to spectral spaces what compact Hausdorff spaces are to Stone spaces.
For every stably compact space $X$, there is a stably compact space $X^\partial$---called the \emph{de Groot dual} of $X$, with reference to \cite{DeGroot1967}---with the same underlying set and whose open sets are the complements of the compact saturated sets of $X$, and vice versa whose compact saturated sets are the complements of the open sets of $X$. 
For example, the de Groot dual of $[0,1]$ with the upper topology is $[0,1]$ with the lower topology.
We warn the reader that, in this instance, ``duality'' means symmetry rather than categorical duality. We refer to \cite[Sec.~9.1.2]{GoubaultLarrecq2013} for more details on de Groot duality.
As a consequence of de Groot duality, for every property involving open sets and compact saturated sets and holding in all spectral spaces, the symmetric property holds in all spectral spaces, as well.

The symmetry in spectral spaces corresponds via Stone duality to the {self-duality} of bounded distributive lattices: if $(A, \leq)$ is a bounded distributive lattice then so is its {\emph{order dual}} $(A, \geq)$, and if a sentence holds in all bounded distributive lattices so does its order-dual, obtained by reversing the order, swapping joins with meets, etc.
Via duality, this symmetry manifests itself also in the pointfree and logical settings, as summarized in the following table (see \cite{GierzKeimel1981,Smyth1986,Smyth1992,JungSuenderhauf1996} for (strong) proximity lattices and \cite{JungKegelmannEtAl1999,Kegelmann2002} for the Multi Lingual Sequent Calculus (MLS)).

\begin{table}[ht]
	\centering
	\begin{tabular}{ |c | c | c | c | }
		\hline
		\textbf{Point-based setting} & \textbf{Point-free setting} & \textbf{Finitary setting} & \textbf{Logical setting} \\
		\hline
		\hline
       Spectral spaces	& Coherent frames &	 Bound.\ distr.\ lattices	& Negation-free sequent calc.\\
       \hline
		Stably compact spaces	& Stably compact frames &	(Strong) proximity latt.& MLS \\ 
		\hline
		\hline
		\emph{De Groot self-duality} & \emph{Lawson self-duality}	& \emph{Order self-duality} & \emph{Symmetry of the calculus}\\
		\hline
	\end{tabular}
\end{table}

As recognized by Jung and Moshier \cite{JungMoshier2006}, the symmetries in Stone dualities are a fundamental part of the nature of the structures involved. Consequently, one has a way of guessing a ``right'' construction or proof by seeking one where the symmetry is preserved.

To summarize, the interaction between open sets and compact saturated sets is central in dualities. In some of these dualities, the class of topological spaces at play exhibits a perfect symmetry between these two families of subsets, but in other ones the symmetry is only partial.
In particular, the duality between sober spaces and spatial frames lacks a formulation with full symmetry.

This paper aims to illustrate that this partial symmetry is not accidental, but, rather, part of a formal, complete {symmetry} in a larger framework that is suitable for duality theory.
Again, we seek a delicate equilibrium: on the one hand, if we don't generalize enough, then we might not get a symmetric setting (or we might not cover all sober spaces); on the other hand, if we generalize too much, we might be unable to obtain a duality that specializes to the one between sober spaces and spatial frames.
What is interesting is that the desired equilibrium exists.

To reach our aim, we introduce two new classes of structures: ko-spaces (\cref{d:ko-space}) and bi-dcpos (\cref{d:bi-dcpo}).

\begin{definition*}
	A \emph{ko-space} is a triple $(X, \K, \O)$ with $X$ a poset, $\K$ a set of upsets of $X$ closed under codirected intersections and containing all principal upsets and $\O$ a set of upsets of $X$ closed under directed unions and containing all complements of principal downsets such that for all codirected $\mathcal{F} \subseteq \K$ and directed $\mathcal{I} \subseteq \O$ with $\bigcap \mathcal{F} \subseteq \bigcup \mathcal{I}$ there are $K \in \mathcal{F}$ and $U \in \mathcal{I}$ with $K \subseteq U$.
\end{definition*}

A well-filtered $T_0$ space $(X, \O)$ can be identified with the ko-space $(X, \K, \O)$ where $X$ is equipped with the specialization order and $\K$ is the set of compact saturated subsets of $X$.

We note that, in ko-spaces, ``compactness'' and ``openness'' are independent primitive notions.
This is motivated by the fact that, even within the class of Hausdorff spaces, the topology is \emph{not} determined by the family of compact saturated sets, since two distinct Hausdorff topologies on the same set may share the same compact saturated sets\footnote{For example, in any discrete space the compact sets are precisely the finite sets, but the same happens in the Arens-Fort space \cite[Part II, 26]{SteenSeebach1978} or in any Fortissimo space \cite[Part II, 25]{SteenSeebach1978}, which are non-discrete Hausdorff spaces.}. 
Symmetrically, we cannot require compact saturated sets to be determined by the topology, either.

To include some non-compact spaces we do not require ``compact subsets'' to be closed under arbitrary intersections, and, symmetrically, we do not require open sets to be closed under arbitrary unions: \emph{directed} ones are enough.\footnote{This is in line with the independent observation by various authors \cite{Banaschewski1988,HofmannLawson1984,Johnstone1985,JohnstoneVickers1991,Erne2007} that frames sometimes are not general enough for a perfect categorical or order-theoretical approach to pointfree topology, in particular with regard to universal constructions, duality theories and applications in computer science, where frequently only directed suprema are available.}
While (possibly unbounded) distributive lattices lie in the scope of dualities (see \cite[Sec.~10]{Priestley1972} and \cite[1.2.4, p.~10]{ClarkDavey1998}), we cannot represent bottomless distributive lattices as lattices of \emph{compact open} subsets of a topological space (as in the bounded setting), since the empty subset is always compact and open!
For one such representation, we need to generalize topological spaces so that at least $\varnothing$ may be non-open (or non-compact).
Here we simply note that ko-spaces have the required flexibility.
The idea is that, outside the bounded context, we may refine Stone's maxim ``one must always topologize'' to ``one must always close under directed unions''; these two processes coincide for bounded distributive lattices.

The category of ko-spaces admits a categorical self-duality whose restriction to the class of stably compact spaces is de Groot duality.

We arrive now at the second class of structures, which generalize spatial frames and continuous domains:
\begin{definition*}
  	A \emph{bi-dcpo} is a triple $(\K, \O, \ko)$ with $\ko$ a relation between a poset $\K$ with all codirected meets and a poset $\O$ with all directed joins such that (i)  for all codirected $\F \subseteq \K$ and directed $\I \subseteq \O$ with $\bigwedge \F \ko \bigvee \I$ there are $k \in \F$ and $u \in \I$ with $k \ko u$, (ii) for all $k, k' \in \K$ and $u, u' \in \O$,  $k' \leq k \ko u \leq u'$ implies $k' \ko u'$, and (iii) for all $u,v \in \O$ with $u \nleq v$ there is $k \in \K$ with $k \ko u$ and $k \not\ko v$, and (iv) for all $k, l \in \K$ with $k \nleq l$ there is $u \in \O$ with $l \ko u$ and $k \not\ko u$.
\end{definition*}
For any ko-space $(X, \K, \O)$, one obtains a bi-dcpo $(\K, \O, \subseteq)$ by equipping $\K$ and $\O$ with the inclusion order and letting ${\subseteq} \colon \K \rel \O$ be the inclusion relation.
Moreover, for every dcpo $D$ with enough Scott-open filters (such as any spatial frame and any continuous domain), one obtains a bi-dcpo $(\K, \O, \ko)$ by setting $\K$ as the set of Scott-open filters with the reverse inclusion order, $\O$ as $D$ and $\ko \colon \K \rel \O$ as the reverse membership relation $\ni$.

The category of bi-dcpos admits a categorical self-duality that extends Lawson duality for continuous domains.

Not all bi-dcpos are spatial, i.e., arise from a ko-space.
To characterize spatiality, we take inspiration from the equivalence between spatiality and distributivity for continuous lattices \cite[Sec.~V]{GierzHofmannEtAl2003}.
To overcome the lack of a lattice structure on bi-dcpos, we apply the theory of polarities to the relation $\vartriangleleft \colon \K \rel \O$ to embed both $\K$ and $\O$ into a complete lattice: its ``concept lattice''. (This can be thought of as its ``canonical extension'', for readers more familiar with this notion.)
This complete lattice is the set of upsets (= saturated sets) of the associated space, when this exists.
The bi-dcpos with a distributive concept lattice are called \emph{distributive}; using the known connection between distributivity and Gentzen’s cut rule, we express this property in first-order terms (\cref{d:distributive}).

The main results of this paper are:
\begin{itemize}
    \item a one-to-one correspondence between ko-spaces and distributive bi-dcpos (\cref{t:bij-corr}), which extends on morphisms to
    \item a categorical equivalence (and duality) between the corresponding categories (\cref{t:MAIN}).
\end{itemize}

Our results integrate insights from the study of bitopology \cite{Kelly1963}, d-frames \cite{JungMoshier2006}, formal concept analysis \cite{GanterWille1999}, Chu spaces \cite{Barr1979}, domain theory \cite{GierzHofmannEtAl2003} and canonical extensions \cite{JonssonTarski1951,GehrkeJonsson1994,GehrkeHarding2001,GehrkeJonsson2004}, offering a unified perspective on some aspects of duality theory.
Our proofs mostly rely on the basic theory of formal concept analysis \cite{GanterWille1999} (and in particular on the version of the fundamental theorem of formal concept analysis in the form of a categorical equivalence between purified polarities and double base lattices in \cite{Erne2004a}), on the study of distributivity for concept lattices in \cite{Erne1993} and on the bijective correspondence between weakening relations between posets and adjoint pairs between the corresponding lattices of upsets \cite{GalatosJipsen2020}.

\addtocontents{toc}{\SkipTocEntry}
\subsection*{Local compactness and continuous domains.}
In \cref{s:local-compactness} we focus on local compactness (whose pointfree version is called continuity in domain theory).
The emerging message is that local compactness makes the one-sorted approach (e.g.\ frames) and the two-sorted approach (e.g., bi-dcpos) equally expressive.

This is embodied in our presentation of continuous domains as a special class of bi-dcpos (\cref{t:continuous-domains-and-bicontinuous-bi-dcpos}), which may be seen as an explanation for the existence of Lawson duality for continuous domains.

Moreover, we prove that, under bicontinuity (a mild strengthening of local compactness), the distributivity of a bi-dcpo can be checked just by looking at the distributivity of either the dcpo or the co-dcpo of which the bi-dcpo is made, when it happens to be a lattice (\cref{t:distributivity-from-O-K}).
In particular, this gives another perspective on why local compactness of a frame implies spatiality: local compactness allows one to transfer the distributivity of the underlying lattice of a frame to the distributivity (and hence spatiality) of its ``canonical extension'' (in the terminology of \cite{Jakl2020}), i.e.\ of the concept lattice of the bi-dcpo associated to it.

\addtocontents{toc}{\SkipTocEntry}

\subsection*{Literature comparison}
Our approach is similar in nature to several previous works, which gave us precious insights but differ from ours in some key aspects.

For example, our theory is close to the theory of prespaces \cite{Erne2007}, which are sets equipped with a family of subsets closed under directed unions and finite intersections.
Ko-spaces differ from prespaces in two main aspects: (i) open sets in ko-spaces are not required to be closed under finite intersections, and (ii) ko-spaces have \emph{two} non-interdefinable sets $\K$ and $\O$ of subsets.
Regarding the first difference, since the closure under finite intersections is not needed for our duality, we do not require it. Moreover, on the pointfree side, by not requiring the closure under finite meets we can cover all continuous domains via bicontinuous bi-dcpos (\cref{t:continuous-domains-and-bicontinuous-bi-dcpos}).
Regarding the second difference, while under local compactness the one-family approach of prespaces suffices to get a symmetric setting\footnote{In fact, for locally compact well-filtered spaces not only compact saturated sets are definable from open sets, but also vice versa (see \cref{c:lc-d-prespace}).}, to cover all Hausdorff spaces in a symmetric setting we need two families, as already mentioned.

This brings us to the comparison with bitopological spaces \cite{Kelly1963}, which are sets equipped with two topologies.
If a bitopological space $(X, \tau_1, \tau_2)$ has some ko-space counterpart, it is $(X, \{X \setminus U \mid U \in \tau_2\}, \tau_1)$; that is, we take the \emph{complements} of the elements of one of the two topologies. We do so because, in our topological interpretation of a ko-space $(X, \K, \O)$, $\K$ is the set of compact saturated sets and $\O$ the set of open sets, and properties with this convention seem to be more familiar and natural. For example, local compactness has a particularly natural formulation in terms of inclusions between compact saturated sets and open sets (if $K \subseteq U$ then there are $K', U'$ such that $K \subseteq U' \subseteq K' \subseteq U$).
With our convention, all the basic relations are subset inclusion relations, and so one can use intuitions about partial orders; this is particularly pleasant on the pointfree side, i.e., the side of bi-dcpos, where all the basic relations are seen to be the restriction of the order on a larger complete lattice.
Our proposal to go beyond bitopological spaces is motivated by the fact that the set of complements of compact saturated sets in a sober space is not a topology in general; accordingly, the closure conditions of the two families in a ko-space are more relaxed. This allows us to cover in a natural way all sober spaces, and more generally all well-filtered $T_0$ spaces.
On the other hand, we impose a precise compatibility condition in order not to leave the two ``topologies'' unrelated, as otherwise one would lose the typical property that makes openness and compactness interact so peculiarly, and duality possible.

Turning to the pointfree side, bi-dcpos are similar to d-frames \cite{JungMoshier2006}, the pointfree counterpart of bitopological spaces. (Biframes \cite{BanaschewskiBruemmerEtAl1983} provide a different pointfree approach, which is, however, more distant from ours.)
The differences between bi-dcpos and d-frames are similar to the ones between ko-spaces and bitopological spaces.

Bi-dcpos resemble also topological systems.
A \emph{topological system}  \cite[5.1.1]{Vickers1989} consists of a set $X$ (called the set of \emph{points}), a frame $A$ (called the set of \emph{opens}) and a subset of $X \times A$ with certain properties.
Also, a \emph{continuous map} between topological systems (\cite[Def.~5.2.1]{Vickers1989}) is similar to a Galois morphism between bi-dcpos (\cref{d:Galois-morphism}).
From a narrative point of view, the terminologies slightly differ: what is for Vickers a ``\emph{point}'' is for us a ``\emph{compact saturated set}''.
From a technical point of view, there are key differences: for example, we do not require the set of opens to be a frame, but just a dcpo; moreover, we require the compact saturated sets to be a co-dcpo in the induced order, while no similar requirement appears for topological systems.


\section{Ko-spaces}

We recall that the \emph{specialization (pre)order} of a topological space is defined by stipulating $x$ to be below $y$ if and only if every open set containing $x$ contains $y$.
A space is $T_0$ if and only if its specialization preorder is a partial order, and $T_1$ if and only if it is the equality.
We also recall that a subset of a topological space is \emph{saturated} if it is an intersection of open sets; equivalently, if it is an upset in the specialization preorder.
A topological space is $T_1$ if and only if every subset is saturated.

We also recall from the introduction that a topological space $X$ is \emph{well-filtered} if for every codirected set $\F$ of compact saturated subsets of $X$ and every open subset $U$ of $X$ with $\bigcap \F \subseteq U$ there is $K \in \F$ with $K \subseteq U$ (see e.g.\ \cite[Def.~8.3.5]{GoubaultLarrecq2013}). Many spaces of interest are well-filtered; in fact,
\[
\text{Hausdorff} \Rightarrow \text{sober} \Rightarrow \text{well-filtered}, 
\]
and the converses of both implications fail in general.

\begin{notation}[$\Op$, $\KSat$]
	Given a topological space $X$, we let $\Op(X)$ denote the set of open subsets of $X$, and $\KSat(X)$ the set of compact saturated subsets of $X$.
\end{notation}

The motivating example for the notion below of a \emph{ko-space} is, for any well-filtered $T_0$ space $X$, the triple $(X, \KSat(X), \Op(X))$, where $X$ is equipped with the specialization order.
In the name ``ko-space'', ``k'' stands for compact, and ``o'' for open.

\begin{definition}[Ko-space] \label{d:ko-space}
	A \emph{ko-space} is a triple $(X, \K, \O)$, where $X$ is a poset and $\K$ and $\O$ are sets of upsets of $X$ with the following properties. 
	(We call \emph{k-sets} and \emph{o-sets} the elements of $\K$ and $\O$, respectively.)\footnote{The terms ``o-set'' and ``k-set'' should be reminiscent of ``open set'' and ``compact set''. We avoided the latter ones to suggest that o-sets and k-sets are not interdefinable.}
	\begin{enumerate}[label = (S\arabic*), ref = S\arabic*]
	
		\item \label{i:order-ps-direct}\label{i:order-ps-codirect} ((Co)directedness) $\K$ is closed under codirected intersections and $\O$ under directed unions.

		\item \label{i:double-compactness-ps}(Double compactness)\footnote{The conjunction of \eqref{i:compactness-ps} and \eqref{i:cocompactness-ps} can be equivalently expressed as follows: for every codirected subset $\F$ of $\K$ and every directed subset $\I$ of $\O$ with $\bigcap \F \subseteq \bigcup \I$ there are $K \in \F$ and $U \in \I$ with $K \subseteq U$.}
		\begin{enumerate}
		
			\item \label{i:compactness-ps}
			(Compactness of k-sets) For every $K \in \K$ and every directed subset $\I$ of $\O$ with $K \subseteq \bigcup \I$ there is $U \in \I$ with $K \subseteq U$.
		
		\item \label{i:cocompactness-ps}(Cocompactness of o-sets, or well-filteredness) For every $U \in \O$ and every codirected subset $\F$ of $\K$ with $\bigcap \F \subseteq U$ there is $K \in \F$ with $K \subseteq U$.
		
		\end{enumerate}

		\item \label{i:order-ps-ux-is-k}\label{i:order-ps-dxc-is-o} (Principal upsets are compact, principal downsets are closed) For all $x \in X$, $\u x \in \K$ and $X \setminus \d x \in \O$.
	\end{enumerate}
\end{definition}

We refer to the partial order on $X$ as the \emph{specialization order}.
It is determined both by $\O$ and by $\K$.
Indeed, for all $x, y \in X$,
\begin{equation}\label{eq:order}
x \leq y \iff (\forall U \in \O\ (x \in U \Rightarrow y \in U)) \iff (\forall K \in \K\ (x \in K \Rightarrow y \in K)).
\end{equation}

\begin{remark}[Equivalent definition of a ko-space, without the order] \label{r:equivalent-def}
	An equivalent definition where the order is not primitive is:
	a \emph{ko-space} is a triple $(X, \K, \O)$ where $X$ is a set and $\K$ and $\O$ are sets of subsets of $X$
	(whose elements we call \emph{k-sets} and \emph{o-sets}) satisfying \eqref{i:order-ps-direct} ((co)directedness), \eqref{i:double-compactness-ps} (double compactness), and
	\begin{enumerate}[label = (A\arabic*), ref = A\arabic*]
		
		\item \label{i:ps-norm}(Double fitness)
		every o-set is a union of k-sets, and every k-set is an intersection of o-sets;
		
		\item($T_0$)
		o-sets separate points; equivalently (by \eqref{i:ps-norm}), k-sets separate points;

		\item \label{i:ps-minimal-K}\label{i:ps-maximal-O}
		(Principality)
		for all $x \in X$ there is a smallest k-set containing $x$ and a largest o-set not containing $x$.
		
	\end{enumerate}

	The order can be retrieved with the formula in \eqref{eq:order}.
\end{remark}

\begin{remark}[De Groot dual]
	The notion of a ko-space is perfectly symmetrical, in the following sense.
	Let $(X, \K, \O)$ be a ko-space.
	The \emph{de Groot dual} of $(X, \K, \O)$ is 
	\[
	X^\partial \coloneqq (X^\op, \{X \setminus U \mid U \in \O\}, \{X \setminus K \mid K \in \K\}),
	\]
	where $X^\op$ is the order dual of $X$.
	This is a ko-space.
	Note also that $(X^\partial)^\partial = X$.
	The motivation for the name ``de Groot dual'' stems from de Groot duality, which assigns to a stably compact space its \emph{de Groot dual} (named so because of \cite{DeGroot1967}), i.e.\ the stably compact space with the same underlying set and whose open sets are the complements of the compact saturated sets.
\end{remark}

\begin{lemma}[Ko-spaces from topological spaces] \label{l:well-filtered-spaces}
	Let $X$ be a topological space.
	The triple $(X, \KSat(X), \Op(X))$ (with $X$ equipped with the specialization preorder) is a ko-space if and only if $X$ is $T_0$ and well-filtered.
\end{lemma}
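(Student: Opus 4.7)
The plan is to verify both directions by unpacking the definition of a ko-space axiom by axiom and matching them against the topological hypotheses.

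For the reverse direction, I would extract $T_0$-ness directly from the requirement that $X$ be a poset (not just a preordered set) under its specialization preorder, and I would read off well-filteredness as axiom \eqref{i:cocompactness-ps} specialized to $\K = \KSat(X)$ and $\O = \Op(X)$. This direction is essentially immediate.

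For the forward direction, assuming $X$ is $T_0$ and well-filtered, I would check each clause in turn. First, $T_0$-ness gives that the specialization preorder is antisymmetric. Second, it is standard that every open set and every saturated set (in particular, every compact saturated set) is an upset in the specialization preorder. Third, for \eqref{i:order-ps-direct}, $\Op(X)$ is closed under arbitrary (hence directed) unions by definition of a topology; the nontrivial half is that $\KSat(X)$ is closed under codirected intersections in any well-filtered space, which I expect to be the main obstacle. I would prove this by noting that an arbitrary intersection of saturated sets is saturated, and then verifying compactness of the intersection via the directed-cover formulation: given a codirected family $\F \subseteq \KSat(X)$ and a directed family $\I \subseteq \Op(X)$ with $\bigcap \F \subseteq \bigcup \I$, well-filteredness (with $U := \bigcup \I$) yields some $K \in \F$ with $K \subseteq \bigcup \I$, and then compactness of $K$ (in its directed-cover form, cited earlier in the paper as \cite[Prop.~4.4.7]{GoubaultLarrecq2013}) yields some $U \in \I$ with $K \subseteq U$, whence $\bigcap \F \subseteq U$.

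Fourth, for \eqref{i:double-compactness-ps}, the compactness clause \eqref{i:compactness-ps} is precisely the directed-cover characterization of compactness, and the cocompactness clause \eqref{i:cocompactness-ps} is the definition of well-filteredness. Finally, for \eqref{i:order-ps-ux-is-k}, I would observe that in any topological space $\u x$ coincides with the saturation of $\{x\}$, and the saturation of a compact set is compact, so $\u x \in \KSat(X)$; dually, $\d x$ is the closure of $\{x\}$ in any topological space, so its complement lies in $\Op(X)$. The only genuinely substantive step is the codirected-intersection closure in step three; everything else is either standard or literal unfolding.
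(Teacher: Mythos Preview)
Your proposal is correct and follows essentially the same approach as the paper's proof, checking each ko-space axiom against the topological hypotheses. The only cosmetic differences are that the paper cites \cite[Prop.~8.3.6]{GoubaultLarrecq2013} for closure of $\KSat(X)$ under codirected intersections (where you supply the short direct argument), and it phrases the openness of $X \setminus \d x$ as ``union of all opens avoiding $x$'' rather than ``complement of the closure of $\{x\}$''.
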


\begin{proof}
	($\Rightarrow$). Suppose that $(X, \KSat(X), \Op(X))$ is a ko-space.
	Then $X$ is $T_0$ because the specialization preorder is a partial order, and well-filtered by \eqref{i:cocompactness-ps} in \cref{d:ko-space}.

	($\Leftarrow$).
	Suppose that $X$ is $T_0$ and well-filtered.
	Since $X$ is $T_0$, the specialization preorder is a partial order.
	Clearly, all open sets and all compact saturated sets are upsets.
	$\Op(X)$ is clearly closed under arbitrary unions.
	Since $X$ is well-filtered, $\KSat(X)$ is closed under codirected intersections \cite[Prop.~8.3.6]{GoubaultLarrecq2013}.
	The condition \eqref{i:compactness-ps} in \cref{d:ko-space} (compactness of k-sets) holds by definition of topological compactness, and the condition \eqref{i:cocompactness-ps} in \cref{d:ko-space} (well-filteredness) holds since $X$ is well-filtered.
	Every singleton $\{x\}$ is compact, and so its saturation $\u x$ is compact and saturated.
	By definition of the specialization preorder, $X \setminus \d x$ is the union of all the open sets to which $x$ does not belong, and hence it is open.
\end{proof}

The following example provides a motivation for having both $\K$ and $\O$ as primitive notions, so that $\K$ can be the most appropriate family of compact sets in a particular context, and not necessarily the whole family of compact saturated sets; indeed, while the compact sets of relevance in spectral spaces are the compact saturated sets, the compact sets of relevance in compactly generated weakly Hausdorff spaces \cite{May1999} are the compact Hausdorff subspaces\footnote{In general, compact Hausdorff subspaces of a compactly generated weakly Hausdorff space differ from compact saturated ones, as witnessed by any compact non-Hausdorff such space, such as the one-point compactification of $\Q$.}.

\begin{examples}[Examples of ko-spaces]
	\hfill
	\begin{enumerate}
	
		\item (Compactly generated weakly Hausdorff spaces)
		A \emph{compactly generated} space (e.g.\ \cite[Def.~1.1]{Strickland2009}) is a topological space $X$ such that, for every subset $Y$ of $X$, $Y$ is closed if and only if, for every compact Hausdorff space $K$ and every continuous map $f \colon K \to X$, $f^{-1}[Y]$ is closed.
	A \emph{weakly Hausdorff} space (e.g.\ \cite[Def.~1.2]{Strickland2009}) is a topological space $X$ such that, for every compact Hausdorff space $K$ and every continuous map $f \colon K \to X$, the image $f[K]$ is closed in $X$.
	
	Let $(X, \O)$ be a compactly generated weakly Hausdorff space.
	Let $\K$ be the set of compact Hausdorff subspaces (i.e.\ those subspaces which are compact and Hausdorff in the induced topology).
	We prove that $(X, \K, \O)$---where $X$ carries the equality relation as a specialization order---is a ko-space.
	
	The set of open subsets of $X$ is obviously closed under arbitrary unions.
	The set of compact Hausdorff subspaces is closed under nonempty intersections: indeed, letting $K$ be any element in a family of compact Hausdorff subspaces, the intersection of the family is an intersection of compact subsets of the compact Hausdorff space $K$. 
	Thus, $\K$ is closed under codirected intersections.
	Every element of $\K$ is compact, and so the condition \eqref{i:compactness-ps} in \cref{d:ko-space} is satisfied.
	Let us prove that the condition \eqref{i:cocompactness-ps} in \cref{d:ko-space} (cocompactness of o-sets) is satisfied, as well.
	Let $U$ be an open subset of $X$ and $\F$ a codirected subset of $\K$ with $\bigcap \F \subseteq U$.
	Since $\F$ is nonempty, there is $K_0 \in \F$.
	Then, $	\bigcap_{K \in \F} (K \cap K_0) \subseteq U \cap K_0$.
	Since $K_0$ is compact Hausdorff and $U \cap K_0$ is open in $K_0$, there is $K \in \F$ such that $K \cap K_0 \subseteq U \cap K_0$.
	Since $\F$ is codirected, there is $L \in \F$ such that $L \subseteq K \cap K_0 \subseteq U \cap K_0 \subseteq U$.
	This proves the condition \eqref{i:cocompactness-ps} in \cref{d:ko-space} (cocompactness of o-sets).
	The singletons (i.e.\ the principal upsets) are clearly compact Hausdorff, and so they belong to $\K$.
	The complements of the singletons (i.e.\ the complements of the principal downsets) belong to $\O$ because $(X, \O)$ is weakly Hausdorff and so $T_1$.
	This proves that $(X, \K, \O)$ is a ko-space.
	
		\item \label{i:fin-cofin-etc}
		Let $X$ be a set.
		Set
		\begin{align*}
			\mathcal{P}(X)&\coloneqq \text{power set of $X$},\\
			\mathrm{Fin}(X)& \coloneqq \text{set of finite subsets of $X$},\\
			\mathrm{CoFin}(X)& \coloneqq \text{set of cofinite subsets of $X$},\\
			\mathrm{Singl}(X) & \coloneqq \text{set of singletons of $X$},\\
			\mathrm{CoSingl}(X) & \coloneqq \text{set of complements of singletons of $X$}.
		\end{align*}
		The following structures, with equality as specialization order, are ko-spaces:
		\begin{center}
			\begin{tabular}{l}
				$(X, \mathrm{Fin}(X), \mathcal{P}(X))$,\\
				$(X, \mathcal{P}(X), \mathrm{CoFin}(X))$,\\
				$(X, \mathrm{Singl}(X), \mathcal{P}(X))$,\\
				$(X,\mathcal{P}(X), \mathrm{CoSingl}(X))$,\\
				$(X, \mathrm{Singl}(X), \mathrm{CoFin}(X))$,\\
				$(X, \mathrm{Fin}(X), \mathrm{CoSingl}(X))$,\\
				$(X, \mathrm{Fin}(X), \mathrm{CoFin}(X))$,\\
				$(X, \mathrm{Singl}(X), \mathrm{CoSingl}(X))$.
			\end{tabular}
		\end{center}
		These examples show that neither the set $\K$ of k-sets nor the set $\O$ of o-sets determines the other one.
This is in contrast with the case of open sets and compact saturated sets in topology, where compact saturated sets are determined by open sets by definition.

		\item
		Let $X$ be a $T_1$ topological space.
		As in \eqref{i:fin-cofin-etc}, we let $\mathrm{Singl}(X)$ be the set of singletons of $X$ and $\mathrm{Fin}(X)$ be the set of finite subsets of $X$.
		$(X, \mathrm{Singl}(X), \Op(X))$ and $(X, \mathrm{Fin}(X), \Op(X))$ (with equality as specialization order) are ko-spaces.
		
		\item
		Set 
		\[\K \coloneqq \{[a,+\infty) \subseteq \R \mid a \in (-\infty, + \infty]\} \quad \text{and} \quad \O \coloneqq \{(a,+\infty) \subseteq \R \mid a \in [-\infty,+\infty) \}.\]
		Then $(\R, \K, \O)$, with the usual ``less-or-equal'' relation as the specialization order, is a ko-space.
		
	\end{enumerate}
\end{examples}

\begin{remark}[Finite ko-spaces]
	A finite ko-space consists of a finite poset $X$ and two sets $\K$ and $\O$ of upsets of $X$ such that, for every $x \in X$, $\u x \in \K$ and $X \setminus \d x \in \O$.
\end{remark}

We recall that a \emph{dcpo} (for \textbf{d}irected \textbf{c}omplete \textbf{p}artially \textbf{o}rdered set) is a poset that has suprema of all directed subsets; see e.g.\ \cite[Def.~O-2.1]{GierzHofmannEtAl2003}.
A \emph{Scott-closed} subset of a dcpo $D$ is a downset that is closed under directed joins.
A \emph{Scott-open} subset of a dcpo $D$ is the complement of a Scott-closed subset of $D$; i.e., an upset $U$ such that, for every directed subset $I$ of $D$, if $\bigvee I \in U$ then $I \cap U \neq \varnothing$.

\begin{notation}[$\ScottOp$]
	For a dcpo $D$, we let $\ScottOp(D)$ denote the set of Scott-open sets of $D$.
\end{notation}

\begin{example}[Ko-spaces from dcpos]\label{ex:dcpo}
	If $D$ is a dcpo, the triple
	\[
	(D, \{\u x \mid x \in D\}, \ScottOp(D))
	\]
	(with the order of $D$ as specialization order)
	and its de Groot dual 
	\[
	(D^\op, \{C \subseteq D \mid C \text{ is Scott-closed}\}, \{D \setminus \u x \mid x \in D\})
	\]
	(with the dual of the order of $D$ as specialization order) are ko-spaces.
\end{example}

\begin{proposition}[Characterization of posets underlying a ko-space] \label{p:proposition}
	A poset $X$ is the underlying poset of some ko-space if and only if, for every directed $I \subseteq X$ and every codirected $F \subseteq X$ such that each element of $I$ is below each element of $F$, there is an element of $X$ that is an upper bound for $I$ and a lower bound for $F$.
\end{proposition}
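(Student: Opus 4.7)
\emph{Necessity.} Suppose $X$ is the underlying poset of a ko-space $(X,\K,\O)$, and let $I \subseteq X$ be directed and $F \subseteq X$ codirected, with $i \leq f$ for all $i \in I, f \in F$. Form the families $\mathcal{F} := \{{\uparrow} i : i \in I\} \subseteq \K$ and $\mathcal{I} := \{X \setminus {\downarrow} f : f \in F\} \subseteq \O$, using the principality axiom. Directedness of $I$ yields codirectedness of $\mathcal{F}$ in $(\K, \subseteq)$: for $i_1, i_2 \in I$ a common upper bound $i_3 \in I$ gives ${\uparrow} i_3 \subseteq {\uparrow} i_1 \cap {\uparrow} i_2$. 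Dually, $\mathcal{I}$ is directed in $(\O, \subseteq)$. If no $x$ were simultaneously an upper bound of $I$ and a lower bound of $F$, then $\bigcap \mathcal{F} \cap \bigcap_{f \in F} {\downarrow} f = \varnothing$, which rearranges to $\bigcap \mathcal{F} \subseteq \bigcup \mathcal{I}$; double compactness would then produce $i \in I$, $f \in F$ with ${\uparrow} i \cap {\downarrow} f = \varnothing$. But $i \leq f$ by hypothesis gives $i \in {\uparrow} i \cap {\downarrow} f$, a contradiction.

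\emph{Sufficiency.} Assume the stated condition on $X$. I would take $\K := \{\bigcap_{a \in A} {\uparrow} a : A \subseteq X\}$, the family of all intersections of principal upsets (automatically closed under arbitrary intersections and containing every principal upset ${\uparrow} x$), and let $\O$ be the smallest family of upsets of $X$ containing the complements $X \setminus {\downarrow} x$ of principal downsets and closed under directed unions. The closure conditions and the principality axiom of a ko-space are then immediate by construction.

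The nontrivial step is double compactness. Given codirected $\mathcal{F} \subseteq \K$ and directed $\mathcal{I} \subseteq \O$ with $\bigcap \mathcal{F} \subseteq \bigcup \mathcal{I}$, suppose for contradiction that no $K \in \mathcal{F}$ is contained in any $U \in \mathcal{I}$. Using the Galois correspondence between subsets of $X$ and their upper-bound (resp.\ lower-bound) sets, together with a choice function picking witnesses $y_{K,U} \in K \setminus U$, one would extract a directed subset $I_0 \subseteq X$ and a codirected subset $F_0 \subseteq X$ satisfying $i \leq f$ for every $i \in I_0$ and $f \in F_0$, and such that any $x$ with $I_0 \leq x \leq F_0$ automatically lies in $\bigcap \mathcal{F} \setminus \bigcup \mathcal{I}$. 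The hypothesis on $X$ then produces such an $x$, contradicting $\bigcap \mathcal{F} \subseteq \bigcup \mathcal{I}$. The main obstacle is exactly this extraction: constructing compatible $I_0$ and $F_0$ from $\mathcal{F}$ and $\mathcal{I}$, which requires unfolding the closure operations defining $\K$ and $\O$ in terms of the polarity structure $({\leq})$ on $X$.
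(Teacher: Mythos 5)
Your necessity argument is correct and coincides with the paper's: from $I$ and $F$ you pass to the codirected family $\{\u i \mid i \in I\} \subseteq \K$ and the directed family $\{X \setminus \d f \mid f \in F\} \subseteq \O$, note that the non-existence of the required element yields $\bigcap_{i}\u i \subseteq \bigcup_{f}(X\setminus\d f)$, and contradict double compactness using $i \le f$. No issues there.

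The sufficiency direction, however, contains a concrete error and leaves the essential step unproved. The error is the choice $\K := \{\bigcap_{a\in A}\u a \mid A \subseteq X\}$: allowing arbitrary $A$ (in particular $A = \varnothing$) puts $X$ itself into $\K$, and the resulting triple then need not satisfy double compactness. Take $X = \Z$, which satisfies the hypothesis of the proposition (any nonempty subset of $\Z$ that is bounded above has a maximum, and dually). Your $\O$ contains the directed family $\mathcal{I} = \{X\setminus\d n \mid n \in \Z\} = \{\u m \mid m \in \Z\}$, whose union is $\Z$; taking $\mathcal{F} = \{\Z\}$ gives $\bigcap\mathcal{F} \subseteq \bigcup\mathcal{I}$, yet $\Z \nsubseteq \u m$ for every $m$, so $(\Z,\K,\O)$ is not a ko-space. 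The paper avoids this by closing the principal upsets only under \emph{codirected} intersections, mirroring the closure of $\O$ under directed unions; every k-set so obtained is contained in some principal upset, so $X \in \K$ only when $X$ has a bottom. Second, even with the corrected $\K$, the verification of double compactness for the \emph{generated} families is exactly the step you flag as ``the main obstacle'' and do not carry out. The reduction to the hypothesis is immediate only at the level of generators: for directed $A$ and codirected $B$, the inclusion $\bigcap_{a\in A}\u a \subseteq \bigcup_{b\in B}(X\setminus\d b)$ says precisely that no $x$ satisfies $A \le x \le B$, whence by the contrapositive of the hypothesis there are $a \in A$ and $b \in B$ with $a \nleq b$, i.e.\ $\u a \subseteq X\setminus\d b$. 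Showing that this survives the closure operations is the actual content of the implication (the paper, too, compresses it into the single sentence ``The property in the statement gives double compactness''), and your sketch via a choice of witnesses $y_{K,U}$ does not supply it.
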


\begin{proof}
	($\Rightarrow$). Let $X$ be a ko-space, let $I \subseteq X$ be directed, let $F \subseteq X$ be codirected, and suppose that for all $x \in I$ and $y \in F$ we have $x \leq y$, which implies $\u x \nsubseteq X \setminus \d y$.
	Then, by double compactness, $\bigcap_{x \in I} \u x \nsubseteq \bigcup_{y \in F} X \setminus \d y$.
	Thus, there is an upper bound for $I$ that is a lower bound for $F$.
	
	($\Leftarrow$).
	Let $X$ be a poset with the property in the statement, let $\K$ be the closure of $\{\u x\mid x \in X\}$ under codirected intersection and $\O$ the closure of $\{X \setminus \d x \mid x \in X\}$ under directed unions.
	$\K$ and $\O$ are sets of upsets, $\K$ is closed under codirected intersections and $\O$ under directed unions, and $\K$ contains all principal upsets and $\O$ all complements of principal downsets.
	The property in the statement gives double compactness.
\end{proof}

\begin{remark}[Examples of underlying posets]
	All dcpos and all co-dcpos (= order duals of dcpos) are the underlying posets of some ko-space.
	The same is true for $\Z$, which is neither a dcpo nor a co-dcpo.
	Examples of posets that are not the underlying poset of any ko-space are $\Q$ and the chain with $\omega^\op$ on top of $\omega$.
\end{remark}


\section{(Distributive) (embedded) bi-dcpos}

We will consider the following two ``pointfree versions'' of a ko-space $(X, \K, \O)$:
\begin{enumerate}
	\item the triple $(\K, \O, \ko)$, where $\ko \colon \K \rel \O$ is the inclusion relation of a k-set into an o-set.
	\item the triple $(\Up(X), \K, \O)$, where $\Up(X)$ is the poset of upsets of $X$, of which $\K$ and $\O$ are subsets.
\end{enumerate}
The ko-space $(X, \K, \O)$ can be retrieved (up to isomorphism) from either of the two.

We will axiomatize the structures arising in the first way as the ``distributive bi-dcpos'', and the ones arising in the second way as the ``distributive embedded bi-dcpos''.
Distributive bi-dcpos are special instances of what we call bi-dcpos, which are special instances of polarities.
Distributive embedded bi-dcpos are special instances of what we call embedded bi-dcpos, which are special instances of double base lattices.

\subsection{\texorpdfstring{Purified polarities $\stackrel{1:1}{\leftrightarrow}$ double base lattices}{Purified polarities <-> double base lattices}}

 \emph{Polarities} (also known as \emph{formal contexts}) and \emph{double base lattices} are the objects of study of \emph{formal concept analysis}.\footnote{For readers familiar with Chu spaces \cite{Barr1979,Pratt1995}, polarities are precisely the Chu spaces with $\{0,1\}$ as the set of values.} 
 For a textbook reference, see \cite{GanterWille1999} (or \cite[Sec.~3]{DaveyPriestley2002}).
The idea (and the term \emph{polarity}) goes back to Birkhoff \cite[Sec.~V.7, p.~122]{Birkhoff1967}.

We use the symbol $\rel$ for relations (e.g., $R \colon X \rel Y$).

\begin{definition}[{Polarity}]
	A \emph{polarity} (also known as a \emph{formal context}\footnote{See e.g.\ \cite[Def.~18]{GanterWille1999}.}) is a triple $(\K, \O, \ko)$ where $\K$ and $\O$ are sets and $\ko \colon \K \rel \O$ is a relation from $\K$ to $\O$.
\end{definition}

We invite the reader to think of $\ko$ as a ``less than or equal to''-like relation from $\K$ to $\O$, induced by an order in a certain poset where both $\K$ and $\O$ live.
Also, the letters $\K$ and $\O$ should suggest thinking of $k \ko u$ as the inclusion of a compact saturated set $k$ into an open set $u$.
The notion of polarity is symmetrical: if $(\K, \O, \ko)$ is a polarity, then $(\O, \K, \ko^\partial)$ (with $u \ko^\partial k$ if and only if $k \ko u$) is a polarity, too.

\begin{definition}[Specialization preorder, ${\uparrow_\O}$, ${\downarrow_\K}$]
	Let $(\K, \O, \ko)$ be a polarity.
	\begin{enumerate}
	
		\item
		For $k \in \K$ we set $\uo k \coloneqq \{u \in \O \mid k \ko u\}$, and for $u \in \O$ we set $\dk u \coloneqq \{k \in \K \mid k \ko u\}$.
		
		\item
		Both $\K$ and $\O$ admit a natural preorder, called the \emph{specialization preorder} (e.g.\ \cite[p.~543]{Erne1993}):
		\begin{enumerate}
			\item for $k,l\in \K$, $k \leq l$ if and only if ${\uo k} \supseteq {\uo l}$,
			\item for $u, v \in \O$, $u \leq v$ if and only if $\dk u \subseteq \dk v$.
		\end{enumerate}
	
		\item
		For $k \in \K$ we set $\dk k \coloneqq \{l \in \K \mid l \leq k\}$, and for $u \in \O$ we set $\uo u \coloneqq \{v \in \O \mid u \leq v\}$.
	\end{enumerate}
\end{definition}

\begin{remark}[Polarity between sets vs polarity between preorders] \label{r:polarity-posets}
	In the definition of a polarity, we assume to start with a relation between two \emph{sets}, which then induces a preorder on those two sets.
	Alternatively, one could also start with a relation between two \emph{preorders}, taking the preorders as primitive; then one would need axioms to make sure that the specialization preorders agree with the given preorder: for example, one can take
	\begin{enumerate}
	
		\item
		(Weakening): $k' \leq k \ko u \leq u'$ implies $k' \ko u'$,
		
		\item
		(Extensionality) $\dk u \subseteq \dk v$ implies $u \le v$, and $\uo k \supseteq \uo l$ implies $k \le l$.
	
	\end{enumerate}
	
	To see the equivalence between the two approaches, suppose first that
	$(\K,\O,\ko)$ is a polarity (i.e., a relation between two sets). If
	$k' \le k \ko u \le u'$, then $k'\le k$ means $\uo k' \supseteq \uo k$, and
	so $k'\ko u$, and $u\le u'$ means $\dk u \subseteq \dk u'$, and so $k'\ko u'$.
	Thus weakening holds. Extensionality is immediate from the definition of the
	specialization orders:
	\[
	u\le v \iff \dk u \subseteq \dk v,
	\qquad
	k\le l \iff \uo k \supseteq \uo l .
	\]
	
	Conversely, suppose that $\K$ and $\O$ are preorders and that $\ko$ satisfies weakening and extensionality. We show
	that the given preorders are exactly the specialization orders induced by $\ko$.
	If $k\le l$, then weakening gives $\uo k\supseteq \uo l$; the converse is
	extensionality. Hence
	\[
	k\le l \iff \uo k \supseteq \uo l .
	\]
	Similarly, if $u\le v$, then weakening gives $\dk u\subseteq \dk v$; the
	converse is extensionality. Hence
	\[
	u\le v \iff \dk u \subseteq \dk v .
	\]
	Thus the specialization preorders are the given preorders.
\end{remark}

\begin{definition}[Purified polarities]
	A polarity $(\K, \O, \ko)$ is called \emph{purified} (or \emph{clarified}, by some authors) if the specialization preorders on $\K$ and $\O$ are partial orders. 
\end{definition}

That is, a polarity is purified if distinct elements of $\K$ are separated via an element of $\O$ and vice versa.

Whenever we have a purified polarity $(\K, \O, \ko)$ and treat $\K$ and $\O$ as partial orders, it is always assumed that the partial orders are the specialization orders.

\begin{notation}[$\ub_\O$, $\lb_\K$]
	Let $(\K, \O, \ko)$ be a polarity.	
	For $A \subseteq \K$ we define the ``set of upper bounds of $A$ in $\O$''
	\[
	\ub_\O(A) \coloneqq \{u \in \O \mid A \subseteq \dk u\},
	\]
	and for $B \subseteq \O$ we define the ``set of lower bounds of $B$ in $\K$''
	\[
	\lb_\K(B) \coloneqq \{k \in \K \mid B \subseteq \uo k\}.
	\]
\end{notation}

\begin{definition}[Concept lattice, basic functions]
	Let $(\K, \O, \ko)$ be a polarity.
	\begin{enumerate}
	
		\item
		A \emph{formal concept} of $(\K, \O, \ko)$ is a pair $(A, B)$ with $A \subseteq \K$ and $B \subseteq \O$ such that $B = \ub_\O(A)$ and $A = \lb_\K(B)$.
		The set $\mathfrak{B}(\K, \O, \ko)$ of formal concepts of $(\K, \O, \ko)$ is equipped with the \emph{hierarchical order}:
		\[
		(A_1, B_1) \leq (A_2, B_2) \iff A_1 \subseteq A_2 \iff B_2 \subseteq B_1.
		\]
		$\mathfrak{B}(\K, \O, \ko)$ is a complete lattice, called the \emph{concept lattice of $(\K, \O, \ko)$} \cite[Defs.~20-21, pp.~18-19]{GanterWille1999}.
		
		\item
		The maps
		\begin{align*}
			\iota_\K \colon \K & \longrightarrow \mathfrak{B}(\K, \O, \ko) & \iota_\O \colon \O & \longrightarrow \mathfrak{B}(\K, \O, \ko)\\
			k & \longmapsto (\dk k, \uo k) & u & \longmapsto (\dk u, \uo u),
		\end{align*}
		preserve and reflect the preorder, and are
		called \emph{basic functions} \cite{DeitersErne1998}.
	\end{enumerate}
\end{definition}

The basic functions are preorder-preserving and preorder-reflecting.
A polarity is purified if and only if the basic functions are injective; in this case, they are order-embeddings.

\begin{definition}[Double base lattice {\cite[p.\ 130]{Erne2004a}}] \label{d:dbl}
	A \emph{double base lattice} is a triple $(L, \K, \O)$ where $L$ is a complete lattice, $\K$ and $\O$ are subsets of $L$, and
	\begin{enumerate}
		\item[] (Density)\footnote{We use the term ``density'' as this property is reminiscent of the density property in canonical extensions, which is related to the fact that this property can be expressed as ``$\K$ is join-dense in $L$ and $\O$ is meet-dense in $L$''.} every element of $L$ is a join of elements of $\K$ and a meet of elements of $\O$.
	\end{enumerate}
\end{definition}

The notion of double base lattice is symmetrical: if $(L, \K, \O)$ is a double base lattice, then $(L^\op, \O, \K)$ is a double base lattice, too.

In this and the following section, we will speak of the \emph{isomorphism class} of a certain set equipped with some structure (or something similar).
Although we have not yet defined a category, the meaning should be clear: the isomorphism class of a structured set $X$ is the class of all structured sets $Y$ for which there is a bijection between $X$ and $Y$ that preserves and reflects the structure.
We do this to keep the treatment light and close to the intuition, without the categorical machinery to distract some readers from the core ideas; moreover, some of the morphisms we will introduce are relations, and so it is not obvious that the invertible morphisms are precisely the structure-preserving bijections (although they are, see \cref{r:iso-are-bijections}).

For the following, we refer to \cite[Thm.~3, p.~20]{GanterWille1999}, \cite[pp.~70-72]{DaveyPriestley2002} and \cite[Proposition~1.3]{Erne2004a}.
\begin{theorem}[The Fundamental Theorem of Formal Concept Analysis] \label{t:fundam-thm-FCA}
	There is a bijective correspondence between (isomorphism classes of)
	\begin{enumerate}
	
		\item 
		purified polarities and
		
		\item
		double base lattices.
		
	\end{enumerate}
		
\end{theorem}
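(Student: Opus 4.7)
The plan is to exhibit an explicit pair of constructions going between (isomorphism classes of) purified polarities and double base lattices, and to verify that they are mutually inverse. The key tool is the concept lattice $\mathfrak{B}(\K, \O, \ko)$ together with the basic functions $\iota_\K, \iota_\O$ already introduced.

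In one direction, given a purified polarity $(\K, \O, \ko)$, I would form the triple $(\mathfrak{B}(\K, \O, \ko), \iota_\K(\K), \iota_\O(\O))$. Since the polarity is purified, the basic functions are order-embeddings and in particular injective, so $\iota_\K$ and $\iota_\O$ identify $\K$ and $\O$ with subsets of the complete lattice $\mathfrak{B}(\K, \O, \ko)$. The only nontrivial axiom to check is density. For a concept $(A, B)$, one has $A = \lb_\K(B)$ and $B = \ub_\O(A)$; one verifies directly that $(A, B) = \bigvee_{k \in A}\iota_\K(k)$ in $\mathfrak{B}(\K, \O, \ko)$ (the join is $(\lb_\K\ub_\O(A), \ub_\O(A)) = (A,B)$) and symmetrically $(A,B) = \bigwedge_{u \in B}\iota_\O(u)$, giving the required density.

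In the opposite direction, given a double base lattice $(L, \K, \O)$, I would define $\ko \colon \K \rel \O$ by $k \ko u$ iff $k \leq u$ in $L$, and check that the resulting polarity is purified. For this, suppose $k,l \in \K$ have $\uo k = \uo l$; by density, $k = \bigwedge\{u \in \O \mid k \leq u\} = \bigwedge\{u \in \O \mid l \leq u\} = l$, and the dual argument handles $\O$.

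It remains to check that these two passages are mutual inverses up to isomorphism. Starting from a purified polarity and going back, the induced relation on $(\iota_\K(\K), \iota_\O(\O))$ inside $\mathfrak{B}(\K, \O, \ko)$ reproduces $\ko$ precisely because $k \ko u$ iff $\dk k \subseteq \dk u$ iff $\iota_\K(k) \leq \iota_\O(u)$ in the hierarchical order. The main technical step is the converse direction: starting from a double base lattice $(L, \K, \O)$, forming its polarity and then its concept lattice, one must identify $L$ with $\mathfrak{B}(\K, \O, \ko)$. I would define $\varphi \colon L \to \mathfrak{B}(\K, \O, \ko)$ by $\varphi(x) = (\{k \in \K \mid k \leq x\}, \{u \in \O \mid x \leq u\})$. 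Checking that $\varphi(x)$ is a formal concept, and that $\varphi$ is an order-isomorphism sending the subsets $\K, \O \subseteq L$ to $\iota_\K(\K), \iota_\O(\O) \subseteq \mathfrak{B}(\K, \O, \ko)$, is where density is used in both directions: if $u \in \ub_\O(\{k \in \K \mid k \leq x\})$, then $u$ is above every $k \in \K$ below $x$, hence above $x$ (since $x$ is a join of such elements), so $u \in \{u \in \O \mid x \leq u\}$; the reverse inclusion is trivial, and the lower-bound side is symmetric. The injectivity and surjectivity of $\varphi$ again reduce to density.

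The main obstacle I expect is bookkeeping rather than depth: one must carefully keep track of what is being identified with what (concepts as pairs versus elements of $L$, and $\K$, $\O$ as abstract sets versus as subsets of the ambient lattice), and verify that all the identifications are compatible with the preorders, relations and (co)completeness structures in sight. Once the correspondence is set up cleanly on objects, the claim about isomorphism classes follows immediately, since both constructions respect structure-preserving bijections by design.
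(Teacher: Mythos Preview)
Your proposal is correct and follows exactly the constructions the paper describes: the paper does not prove this theorem but cites \cite{GanterWille1999}, \cite{DaveyPriestley2002} and \cite{Erne2004a}, then spells out the two assignments (concept lattice with images of the basic functions in one direction, restriction of the order in the other) and the witnessing isomorphisms $a \mapsto (\dk a, \uo a)$ and $(A,B) \mapsto \bigvee A = \bigwedge B$, which are precisely your $\varphi$ and its inverse. Your sketch supplies the routine verifications (density, purification, that $\varphi(x)$ is a concept, bijectivity) that the paper leaves to the references.
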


The bijective correspondence is:
\begin{enumerate}
	\item[] (1)$\to$(2):
	one associates to a purified polarity $(\K, \O, \ko)$ its concept lattice $\mathfrak{B}(\K, \O, \ko)$ together with the images $\iota_\K[\K]$ and $\iota_\O[\O]$ of the basic functions;
	\item[] (2)$\to$(1):
	one associates to a double base lattice $(L, \K, \O)$ the polarity $(\K, \O, \leq)$, where $\leq \colon \K \rel \O$ is the restriction of the order on $L$.
\end{enumerate}
The isomorphisms witnessing the bijectivity are:
\begin{enumerate}

	\item
	given a purified polarity $(\K, \O, \ko)$, the pair of maps
	\[
		\iota_\K \colon \K \to \im(\iota_\K) \quad \quad \iota_\O \colon \O \to \im(\iota_\O)
	\]
	provide an isomorphism from the polarity $(\K, \O, \ko)$ to the polarity $(\im(\iota_\K), \im(\iota_\O), \leq)$ obtained from $(\mathfrak{B}(\K, \O, \ko), \im(\iota_\K), \im(\iota_\O))$.

	\item
	given a double base lattice $(L, \K, \O)$, the following inverse functions witness an isomorphism:
	\begin{align*}
		L & \longrightarrow \mathfrak{B}(\K, \O, \leq) & \mathfrak{B}(\K, \O, \leq) & \longrightarrow L\\
		a & \longmapsto (\dk a,\uo a), & (A,B) & \longmapsto \bigvee A = \bigwedge B;
	\end{align*}

\end{enumerate}

\subsection{Bi-dcpos}

For a good intuition for the next definition, we suggest thinking of $\K$ as the set of compact saturated subsets of some sober space (ordered by inclusion), $\O$ as the set of open subsets of the same space (ordered by inclusion), and $\ko$ as the relation of inclusion between compact saturated sets and open sets. In particular, $\ko$ is to be interpreted as a two-sorted ``less than or equal to''.

\begin{definition}[Bi-dcpo] \label{d:bi-dcpo}
	A \emph{bi-dcpo}\footnote{Not to be confused with a dcpo whose order dual is a dcpo.} is a purified polarity $(\K, \O, \ko)$ with the following properties.
	(We call \emph{k-elements} and \emph{o-elements} the elements of $\K$ and $\O$, respectively.)
	\begin{enumerate}[label = (L\arabic*), ref = L\arabic*]
	
		\item ((Co)directedness) $\K$ has all codirected meets and $\O$ all directed joins (in the specialization orders).		
		
		\item (Double compactness)
		\begin{enumerate}
		
			\item (Compactness of k-elements) For every $k \in \K$ and every directed subset $I$ of $\O$ with $k \ko \bigvee I$ there is $u \in I$ with $k \ko u$.
		
			\item (Cocompactness of o-elements) For every $u \in \O$ and every codirected subset $F$ of $\K$ with $\bigwedge F \ko u$ there is $k \in F$ with $k \ko u$.
		
		\end{enumerate}
		
	\end{enumerate}
\end{definition}

The name ``bi-dcpo'' is due to the fact that $\O$ is a dcpo and $\K$ is the order dual of a dcpo.

The notion of a bi-dcpo is symmetrical: if $(\K, \O, \ko)$ is a bi-dcpo then $(\O, \K, \ko^\partial)$ (with $u \ko^\partial k \Leftrightarrow k \ko u$) is a bi-dcpo, too.
This is reminiscent of Lawson duality for continuous domains (see \cref{r:Lawson-duality}).

\begin{remark}[Equivalent definition of a bi-dcpo, with the orders as primitive]
	If we want the orders on $\K$ and $\O$ to be primitive, then, by \cref{r:polarity-posets}, a bi-dcpo is a triple $(\K, \O, \ko)$ where $\ko$ is a relation between two posets $\K$ and $\O$ such that
	\begin{enumerate}
	
		\item
		(Weakening): $k' \leq k \ko u \leq u'$ implies $k' \ko u'$,
		
		\item
		(Extensionality) $\dk u \subseteq \dk v$ implies $u \le v$, and $\uo k \supseteq \uo l$ implies $k \le l$.
		
		\item ((Co)directedness) $\K$ has all codirected meets and $\O$ all directed joins.
		
		\item (Double compactness)
		\begin{enumerate}
		
			\item (Compactness of k-elements) For every $k \in \K$ and every directed subset $I$ of $\O$ with $k \ko \bigvee I$ there is $u \in I$ with $k \ko u$.
		
			\item (Cocompactness of o-elements) For every $u \in \O$ and every codirected subset $F$ of $\K$ with $\bigwedge F \ko u$ there is $k \in F$ with $k \ko u$.
		
		\end{enumerate}
	\end{enumerate}
	This is the condition mentioned in the introduction.
\end{remark}

\begin{example}[Bi-dcpos from ko-spaces]
	For every ko-space $(X, \K, \O)$, the triple $(\K, \O, \subseteq \colon \K \rel \O)$ is a bi-dcpo.
	The fact that $(\K, \O, \subseteq \colon \K \rel \O)$ is purified with inclusion as specialization order for both $\K$ and $\O$ follows from the facts that $\K$ and $\O$ are sets of upsets of the poset $X$ and that, for every $x \in X$, $\u x \in \K$ and $X \setminus \d x \in \O$.
\end{example}

\begin{example}[Bi-dcpos from bounded distributive lattices]
	For every bounded distributive lattice $A$, the triple
	\[
		(\text{set of filters of $A$},\ \text{set of ideals of $A$},\ \ko),
	\]
	where $F \ko I \Leftrightarrow F \cap I \neq \varnothing$, is a bi-dcpo.
	The specialization orders are the reverse inclusion order on the set of filters and the inclusion order on the set of ideals.
	This triple is isomorphic to the triple $(\K, \O, \subseteq \colon \K \rel \O)$ with $\K$ the set of compact saturated subsets of the spectral space dual to $A$ and $\O$ the set of open subsets.
\end{example}

We call a bi-dcpo \emph{spatial} if, up to isomorphism, it is of the form $(\K, \O, \subseteq)$ for some ko-space $(X, \K, \O)$.
Not every bi-dcpo is spatial: for example, $(\{a, b, c\}, \{a, b, c\}, =)$ is not spatial.
We will see that a bi-dcpo is spatial if and only if its concept lattice is distributive. In the case of $(\{a, b, c\}, \{a, b, c\}, =)$, this is the non-distributive diamond lattice $M_3$.
{
	\tikzcdset{every arrow/.append style={dash}}
	\[
	\begin{tikzcd}[column sep = tiny, row sep = tiny] 
		&\top \dlar  \dar \drar&\\
		a \drar & b \dar & c \dlar\\
		& \bot &
	\end{tikzcd}
	\]
}

\begin{remark}[Bi-dcpos as dcpos with some Scott-open subsets] \label{r:identify}
	Given a bi-dcpo $(\K, \O, \ko)$, by extensionality we can identify an element $k \in \K$ with the subset $\uo k$ of $\O$.\footnote{This kind of identification can be done at the general level of purified polarities, and gives a $T_0$ \emph{base space} \cite[Sec.~1]{Erne2004}. The self-duality of purified polarities can be expressed nicely also in this setting \cite[Lem.~1.1]{Erne2004}.}
	In this light, a bi-dcpo is precisely a dcpo $\O$ equipped with an order-separating set $\K$ of Scott-open subsets that is closed under directed unions (as, for example, all Scott-open subsets, or all Scott-open filters if $\O$ is open-filter-determined; see below).
\end{remark}

\begin{example}[Bi-dcpos from dcpos] \label{ex:bi-dcpos}
	Any dcpo $D$ can be identified with the bi-dcpo $(\ScottOp(D), D, \ni)$.
	The specialization order on $\ScottOp(D)$ is the reverse inclusion order, and the specialization order on $D$ is the original order.
\end{example}

We recall that an element $u$ of a dcpo $D$ is said to be \emph{way below} an element $v \in D$ if, for every directed subset $E \subseteq D$ with $v \leq \bigvee E$, there is $e \in E$ with $u \leq e$ \cite[Definition~I-1.1]{GierzHofmannEtAl2003}.
We also recall that a \emph{continuous domain} is a dcpo $D$ such that, for all $u \in D$, the set of elements way below $u$ is directed, and $u$ is its supremum (see e.g.\ \cite[Definition I-1.6.(ii)]{GierzHofmannEtAl2003}, where continuous domains are called simply \emph{domains}).

A dcpo $D$ is said to be \emph{open-filter-determined} (\cite[Def.~IV-2.11]{GierzHofmannEtAl2003}) if for all $x, y \in D$ with $x \nleq y$ there is a Scott-open filter $U$ such that $x \in U$ and $y \notin U$.
The notion of an open-filter-determined dcpo generalizes both the notion of a continuous domain and the notion of a spatial frame.
(We recall that if a locale is spatial then it is open-filter-determined, and that the converse is equivalent to the Ultrafilter Principle\label{open-filtered-then-spatial}.\footnote{
Indeed, ``the ultrafilter principle is equivalent to the fact that every frame which is spatial as a preframe is spatial as a frame'' \cite[Thm.~5.2]{Erne2007}, and a frame is spatial as a preframe if and only if it has enough Scott-open filters \cite[Thm.~5.1]{Erne2007}. We also recall that a frame is open-filter-determined if and only if it has enough compact fitted sublocales; this follows from \cite[Lem.~3.4]{Johnstone1985}, which proves, without any choice principle, the bijection between Scott-open filters and compact fitted sublocales---a general localic version of the Hofmann--Mislove theorem \cite{HofmannMislove1981}.})

\begin{notation}[$\ScottOpFilt(D)$]
	 For a dcpo $D$, we let $\ScottOpFilt(D)$ denote the set of Scott-open filters of $D$.
\end{notation}

\begin{example}[Bi-dcpos from open-filter-determined dcpos] \label{ex:open-filter-determined} 
	Any open-filter-determined dcpo $D$ (e.g., a continuous domain, or a spatial frame) can be identified with the bi-dcpo $(\ScottOpFilt(D), D, \ni)$.
	This identification differs from the one in \cref{ex:bi-dcpos}. This difference will be spotted by the morphisms (Scott-continuous functions vs open filter morphisms).
\end{example}

The relation $\ko$ plays a role similar to the role of the ``$\mathrm{tot}$'' predicate in d-frames \cite{JungMoshier2006}.
The role of the ``$\mathrm{con}$'' relation is played by the relation $\ok \colon \O \rel \K$ defined from $\ko$ in \cref{n:con}.


\subsection{Embedded bi-dcpos}

Given a ko-space $(X, \K, \O)$, the triple $(\Up(X), \K, \O)$ where $\Up(X)$ is the poset of upsets of $X$ is an instance of what we call a \emph{distributive embedded bi-dcpos}. But we first give the definition without distributivity.
Let us recall that a double base lattice (\cref{d:dbl}) is a triple $(L, \K, \O)$ with $L$ a complete lattice, $\K$ a join-dense subset of $L$, and $\O$ a meet-dense subset of $L$.
The following gives another way to think of a bi-dcpo $(\K, \O, \ko)$: instead of thinking $\K$ and $\O$ as living in separate universes, they live in the same complete lattice.

\begin{definition}[Embedded bi-dcpo]
	An \emph{embedded bi-dcpo} is a double base lattice $(L, \K, \O)$ (see \cref{d:dbl}) with the following properties. 
	(We call  \emph{k-elements} and \emph{o-elements} the elements of $\K$ and $\O$, respectively.)
	\begin{enumerate}
	
		\item ((Co)directedness) $\K$ is closed in $L$ under codirected meets and $\O$ under directed joins.
		
		\item (Double compactness)\hfill
		\begin{enumerate}
			
			\item (Compactness of k-elements) For every $k \in \K$ and every directed subset $I$ of $\O$ with $k \leq \bigvee I$ there is $u \in I$ with $k \leq u$.
		
			\item (Cocompactness of o-elements) For every $u \in \O$ and every codirected subset $F$ of $\K$ with $\bigwedge F \leq u$ there is $k \in F$ with $k \leq u$.
			
		\end{enumerate}
		
	\end{enumerate}
\end{definition}

The terminology ``embedded bi-dcpo'' is justified because, as we will see, given an embedded bi-dcpo $(L, \K, \O)$, we have a polarity $(\K, \O, \leq)$, where ${\leq} \colon \K \rel \O$, that satisfies all the properties of a bi-dcpo. Thus, $(L, \K, \O)$ is indeed a bi-dcpo together with an embedding into a complete lattice $L$.

\begin{example}[Embedded bi-dcpos from ko-spaces] \label{ex:d-proto-gives-embedded-bi-dcpo}
	For any ko-space $(X, \K, \O)$, the triple $(\Up(X),\K, \O)$ is an embedded bi-dcpo.
	The fact that $(\Up(X), \K, \O)$ is a double base lattice (i.e., density) follows from the fact that every upset is a union of principal upsets and an intersection of complements of principal downsets and that the principal upsets belong to $\K$ and the complements of the principal downsets belong to $\O$.
	The remaining properties are clear.
\end{example}

\begin{example}[Canonical extensions as embedded bi-dcpos]
	It is known that every bounded lattice $A$ has a canonical extension $A \hookrightarrow L$.\footnote{The theory of canonical extensions has its origin in the work of J\'{o}nsson and Tarski on Boolean algebras with operators \cite{JonssonTarski1951,JonssonTarski1952}. The canonical extension $A \hookrightarrow L$ of a bounded distributive lattice $A$ is isomorphic to the inclusion of the family of compact open subsets of the spectral space $X$ dual to $A$ into the complete lattice of upsets of $X$. Canonical extensions allow one to have a single order-theoretic structure encompassing both an algebra and its dual, which makes it easier and more intuitive to understand how to extend additional operations and further structure. For the canonical extension of a bounded distributive lattice, we refer to \cite{GehrkeJonsson1994} and the more comprehensive \cite{GehrkeJonsson2004}; for the not-necessarily-distributive case, we refer to \cite{GehrkeHarding2001}. At the time of writing, a book by Mai Gehrke and Wesley Fussner with a particular focus on canonical extensions is expected to be released soon.}
	With the terminology of this paper, an injective bounded lattice homomorphism $A \hookrightarrow L$ with $L$ complete is a canonical extension if and only if 
	\[
	(L,	\ \text{closure of $A$ in $L$ under arbitrary meets},\ \text{closure of $A$ in $L$ under arbitrary joins})
	\]
	is an embedded bi-dcpo.%
	\footnote{Concerning canonical extensions, we also mention that the \emph{canonical extension} of a locally compact frame $\O$ in the sense of \cite{Jakl2020} is the basic function $\O \hookrightarrow \mathfrak{B}(\ScottOpFilt(\O), \O, \ni)$ mapping $\O$ into the concept lattice of the bi-dcpo $(\ScottOpFilt(\O), \O, \ni)$ (i.e., the underlying complete lattice of the corresponding embedded bi-dcpo). A further treatment of canonical extensions of frames is in \cite{JaklSuarez2025}. There, various classes of filters on a frame $\O$ are considered: completely prime filters, Scott-open filters, exact filters, and strongly exact filters. We note that, for any frame $\O$, the set of completely prime filters $\mathrm{CPFilt}(\O)$ of $\O$ is closed under directed unions; moreover, every completely prime filter is Scott-open. Therefore, if $\O$ has enough completely prime filters (i.e., if it is spatial), $(\mathrm{CPFilt}(\O), \O, \ni)$ is a bi-dcpo (see \cref{r:identify}), and the complete lattice in the embedded bi-dcpo associated to it is what is called in \cite{JaklSuarez2025} the \emph{$\mathrm{CPFilt}(\O)$-extension $\O^{\mathrm{CPFilt}(\O)}$} of $\O$. The same thing happens for the family of Scott-open filters (\cref{ex:open-filter-determined}). Instead, we do not fall within the realm of bi-dcpos for the families of exact and strongly exact filters; however, we still fall within the setting of polarities and double base lattices, as illustrated in \cite{JaklSuarez2025}.}
\end{example}

We call an embedded bi-dcpo $(L, \K, \O)$ \emph{spatial} if it is isomorphic to $(\Up(X),\K', \O')$ for some ko-space $(X, \K', \O')$.
It is not difficult to show that this is equivalent to $L$ being order-isomorphic to the set of upsets of some poset\footnote{The left-to-right implication is clear. For the right-to-left implication: (co)directedness and double compactness are clear; every principal upset $\u x$ is a k-set because, by density, it is a union of k-sets, and so one of these k-sets has to contain $x$ and hence be $\u x$; analogously, the complement of every principal downset is an o-set.}.
We say that a ko-space $(X, \K, \O)$ \emph{represents} an embedded bi-dcpo if the embedded bi-dcpo is isomorphic to $(\Up(X),\K, \O)$.

We will show that an embedded bi-dcpo $(L, \K, \O)$ is spatial if and only if the lattice $L$ is distributive (\cref{c:spatial-iff-distributive}).
An example of a non-spatial embedded bi-dcpo is $(L, \K, \O)$, where $L$ is a non-distributive finite lattice, and $\K$ and $\O$ are respectively join-dense and meet-dense subsets (as, for instance, $L$ itself).


\subsection{\texorpdfstring{Bi-dcpos $\stackrel{1:1}{\leftrightarrow}$ embedded bi-dcpos}{Bi-dcpos <-> embedded bi-dcpos}}

\begin{theorem}[Bi-dcpos $\stackrel{1:1}{\leftrightarrow}$ embedded bi-dcpos] \label{t:one-to-one}
	The bijective correspondence in \cref{t:fundam-thm-FCA} restricts to a bijective correspondence between (isomorphism classes of)
	 \begin{enumerate}
	 
	 	\item
	 	bi-dcpos and
		
		\item
		embedded bi-dcpos.
	 
	 \end{enumerate}
\end{theorem}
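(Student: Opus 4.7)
The plan is to show that the bijection of \cref{t:fundam-thm-FCA}, when restricted to bi-dcpos on the polarity side, lands precisely in embedded bi-dcpos on the double-base-lattice side, and vice versa. Since the underlying bijection is already in place, only the extra axioms ((co)directedness and double compactness) need to be matched; the work is to transfer them along the basic functions in one direction, and along the restriction of the ambient order in the other.

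For the direction bi-dcpo $\Rightarrow$ embedded bi-dcpo, let $(\K, \O, \ko)$ be a bi-dcpo and consider the associated double base lattice $(\mathfrak{B}(\K, \O, \ko),\ \iota_\K[\K],\ \iota_\O[\O])$. The key input is \cref{l:preservation-of-meets-and-joins}: $\iota_\K$ preserves all existing meets (in particular codirected meets that exist in $\K$ by bi-dcpo (co)directedness) and $\iota_\O$ preserves all existing joins. Combined with the fact that the basic functions are order embeddings, this gives that $\iota_\K[\K]$ is closed in $\mathfrak{B}$ under codirected meets (a codirected family in $\iota_\K[\K]$ pulls back to a codirected $F \subseteq \K$, whose meet exists in $\K$ and is sent by $\iota_\K$ to the meet in $\mathfrak{B}$) and dually for $\iota_\O[\O]$. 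Double compactness transfers similarly: a directed $I' \subseteq \iota_\O[\O]$ comes from a directed $I \subseteq \O$, and $\bigvee_\mathfrak{B} \iota_\O[I] = \iota_\O(\bigvee I)$ by the lemma, so an inequality $\iota_\K(k) \leq \bigvee_\mathfrak{B} \iota_\O[I]$ unfolds via the definition of the hierarchical order to $k \ko \bigvee I$, whence bi-dcpo compactness produces $u \in I$ with $k \ko u$, i.e., $\iota_\K(k) \leq \iota_\O(u)$; symmetrically for cocompactness.

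For the converse direction embedded bi-dcpo $\Rightarrow$ bi-dcpo, let $(L, \K, \O)$ be an embedded bi-dcpo and consider $(\K, \O, \leq)$ with $\leq \colon \K \rel \O$ the restriction of the order of $L$. I would first verify that the specialization preorders on $\K$ and on $\O$ induced by this polarity coincide with the restrictions of $\leq_L$: for $k, l \in \K$, the condition ``for every $u \in \O$, $l \leq u \Rightarrow k \leq u$'' is equivalent to $k \leq_L l$ by meet-density of $\O$ in $L$, and dually for $\O$ using join-density of $\K$. Consequently, codirected meets in $\K$ with respect to the specialization order coincide with codirected meets in $L$ (which lie in $\K$ by the embedded-bi-dcpo axiom), and dually directed joins in $\O$ coincide with those in $L$. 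Double compactness in the polarity is then read off directly from double compactness in the embedded bi-dcpo: an inequality $k \ko \bigvee_\O I$ is the same inequality $k \leq_L \bigvee_L I$, and the embedded-bi-dcpo axiom yields $u \in I$ with $k \leq_L u$, i.e., $k \ko u$.

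Finally, I would remark that the two passages above are mutually inverse because they are built from the two inverse operations of \cref{t:fundam-thm-FCA}, which already establishes the bijection at the level of purified polarities and double base lattices; the above verifications only show that the restricted operations do not leave the respective subclasses. The main obstacle I expect is the identification of the specialization orders on the polarity side with the restriction of $\leq_L$ on the embedded side, together with the matching of meets and joins computed in $\K$ and $\O$ with those computed in the ambient complete lattice. Once this identification is nailed down using density and \cref{l:preservation-of-meets-and-joins}, everything else is a direct translation of the axioms.
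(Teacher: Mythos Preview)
Your proposal is correct and follows essentially the same approach as the paper: both directions hinge on \cref{l:preservation-of-meets-and-joins}, and the embedded-to-polarity direction is handled by identifying the specialization orders with the restriction of $\leq_L$ via density. The paper's proof is much terser---it calls the embedded-to-bi-dcpo direction ``clear'' and for the converse only records that $\iota_\K$ and $\iota_\O$ preserve codirected meets and directed joins---so your write-up simply spells out the double-compactness transfer and the order identification that the paper leaves implicit.
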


\begin{proof}
	The polarity associated to an embedded bi-dcpo is clearly a bi-dcpo.
	Vice versa, given a bi-dcpo $(\K, \O, \ko)$, the triple $(\mathfrak{B}(\K, \O, \ko), \mathrm{im}(\iota_\K), \mathrm{im}(\iota_\O))$ is an embedded bi-dcpo because, by \cref{l:preservation-of-meets-and-joins}, the basic functions $\iota_\K \colon \K \hookrightarrow \mathfrak{B}(\K, \O, \ko)$ and $\iota_\O \colon \O \hookrightarrow \mathfrak{B}(\K, \O, \ko)$ preserve codirected meets and directed joins, respectively.
\end{proof}

\subsection{\texorpdfstring{Distributive bi-dcpos $\stackrel{1:1}{\leftrightarrow}$ distributive embedded bi-dcpos}{Distributive bi-dcpos <-> distributive embedded bi-dcpos}}

We are interested in distributivity because it is equivalent to spatiality (\cref{c:spatial-iff-distributive} below).

\begin{definition}[Distributive embedded bi-dcpo]
	An embedded bi-dcpo $(L, \K, \O)$ is \emph{distributive} if the lattice $L$ is distributive.
\end{definition}

We will express the distributivity of the embedded bi-dcpo associated to a given bi-dcpo $(\K, \O, \ko)$ via a first-order condition.\footnote{Later on, we will show its equivalence with the existence of enough completely prime pairs, another first-order condition.}
The crucial observation is the following well-known fact.
\begin{remark} [Distributivity = cut rule] \label{r:distributivity-and-cut-rule}
	 The distributivity of a lattice $L$ is equivalent to
	\[
	\forall k, u, c \in L\ (k \leq u \lor c,\, c \land k \leq u) \Rightarrow k \leq u.
	\]
\end{remark}
This condition, expressed in a sequent calculus style, is the cut rule\footnote{The connection between distributivity and the cut rule seems to go back to Lorenzen: the connection is implicit in the study of entailment relations in \cite[Thm.~7]{Lorenzen1951} (see \cite{Lorenzen2017} for an English translation), of which a proof is only sketched after the statement of Theorem~8 in the same paper (but see \cite[Thm.~1]{CederquistCoquand2000} for a full proof). We would like to thank Thierry Coquand and Henri Lombardi for directing us to Lorenzen's work.}:
\begin{prooftree}
  \AxiomC{$k \Rightarrow u,\, c$}
  \AxiomC{$c,\, k \Rightarrow u$}
  \RightLabel{\ (Cut)}
  \BinaryInfC{$k \Rightarrow u$}
\end{prooftree}
To transpose it in the setting of polarities, we replace $c$ by two symbols $l$ and $v$:
\begin{prooftree}
  \AxiomC{$k \Rightarrow u,\, l$}
  
  \AxiomC{$l \Rightarrow v$}
  
  \AxiomC{$v,\, k \Rightarrow u$}

  \RightLabel{\ (Cut')}
  \TrinaryInfC{$k \Rightarrow u$.}
\end{prooftree}
Note that, under the axiom $a \Rightarrow a$ and the weakening rules, (Cut) and (Cut') are interderivable.
(Cut') corresponds to the following condition on a lattice $L$, which is equivalent to distributivity:
\begin{equation*} \label{eq:cut-rule-two-cuts}
	\forall k,u,l,v \in L\ (k \leq u \lor l,\, l \leq v, \, v \land k \leq u) \Rightarrow k \leq u. \tag{$\dagger$}
\end{equation*}
This leads to the following definition, which was called \emph{weak distributivity} in \cite[p.~550]{Erne1993} in the more general setting of polarities.
In our context (i.e., for bi-dcpos), we drop the word ``weak'' and call it simply ``distributivity''. This is a good name because, for a bi-dcpo, as opposed to what happens for general polarities, this property is equivalent to the distributivity of its concept lattice (\cref{t:corr-bi-dcpos-embedded-bi-dcpos}); this happens because, by Zorn's lemma, double compactness implies a property called ``double foundedness''.

\begin{definition}[Distributive bi-dcpo] \label{d:distributive}
	A bi-dcpo $(\K, \O, \ko)$ is called \emph{distributive} if, for all $k, l \in \K$ and $u,v \in \O$ such that
	\begin{enumerate}
		\item \label{i:first} $k \in \lb_\K(\uo u \cap \uo l)$,
		\item \label{i:second}
		$l \ko v$, and
		\item \label{i:third} $u \in \ub_\O(\dk v \cap \dk k)$,
	\end{enumerate}
	we have $k \ko u$.
\end{definition}

Informally, we can read \eqref{i:first} as $k \leq u \lor l$, \eqref{i:second} as $l \leq v$, and \eqref{i:third} as $v \land k \leq u$, making the link to \eqref{eq:cut-rule-two-cuts} more explicit.
Note that this condition is first-order in the language of bi-dcpos.

\begin{example}[Distributivity for open-filter-determined dcpos] \label{ex:distributive}
	By \cref{ex:open-filter-determined}, if $D$ is an open-filter-determined dcpo, then $(\ScottOpFilt(D), D, \ni)$ is a bi-dcpo.
	We now show that if $D$ is additionally a distributive lattice\footnote{Here, it is \emph{not} necessary that $D$ is a \emph{bounded} distributive lattice.} (for example, if $D$ is an open-filter-determined frame) then the bi-dcpo $(\ScottOpFilt(D), D, \ni)$ is distributive.
	Once we will have proved that distributivity is equivalent to spatiality (\cref{c:spatial-iff-distributive}), this will imply the known fact that every open-filter-determined frame is spatial (already discussed at p.~\pageref{open-filtered-then-spatial}).
	
	Let $u, v \in D$ and $K, L \in \ScottOpFilt(D)$.
	Suppose that the following conditions hold.
	\begin{enumerate}
	
		\item \label{i:d-1}
		$(\u u) \cap L\subseteq K$.
		
		\item \label{i:d-2}
		$v \in L$.
		
		\item \label{i:d-3}
		For every $H \in \ScottOpFilt(D)$ such that $K \cup \{v\} \subseteq H$ we have $u \in H$.
		
	\end{enumerate}
	We shall prove $u \in K$.
	
	By \eqref{i:d-1} and \eqref{i:d-2}, $v \vee u \in K$.
	Set $H \coloneqq \{w \in D \mid w \vee u \in K\}$.
	Clearly, $H$ is Scott-open.
	We show that it is a filter.
	$H$ is clearly an upset since $K$ is an upset.
	$H$ is nonempty because $K$ is a nonempty upset.
	For every $h_1, h_2 \in H$ we have, by definition of $H$, $h_1 \vee u \in K$ and $h_2 \vee u \in K$; thus, since $K$ is a filter, we have $(h_1 \vee u) \wedge (h_2 \vee u) \in K$, i.e., by distributivity, $(h_1 \wedge h_2) \vee u \in K$, i.e. $h_1 \wedge h_2 \in H$.
	This shows that $H$ is a filter.
	Moreover, $v \in H$ because $v \vee u \in K$.
	Furthermore, $K \subseteq H$ because $K$ is an upset.
	Therefore, by \eqref{i:d-3}, $u \in H$, i.e.\ $u \vee u \in K$, i.e.\ $u \in K$, as desired.
\end{example}

We now show the non-trivial fact that the distributivity of a bi-dcpo is equivalent to the distributivity of its concept lattice.

\begin{theorem}[Distributive bi-dcpos $\stackrel{1:1}{\leftrightarrow}$ distributive embedded bi-dcpos] \label{t:corr-bi-dcpos-embedded-bi-dcpos}
	The bijective correspondence in \cref{t:one-to-one} restricts to a bijective correspondence between (isomorphism classes of)
	\begin{enumerate}
	
		\item
		distributive bi-dcpos and
		
		\item
		distributive embedded bi-dcpos.
	
	\end{enumerate}
\end{theorem}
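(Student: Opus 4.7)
The plan is to use the bijective correspondence of \cref{t:one-to-one}: a bi-dcpo $(\K,\O,\ko)$ is matched with the embedded bi-dcpo $(L,\iota_\K[\K],\iota_\O[\O])$ where $L=\mathfrak{B}(\K,\O,\ko)$ is its concept lattice, and I must show that \cref{d:distributive} holds on the bi-dcpo side if and only if $L$ is distributive.

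The first step is to rephrase the three hypotheses of \cref{d:distributive} as inequalities in $L$. Since $\iota_\K[\K]$ is join-dense and $\iota_\O[\O]$ is meet-dense in $L$, one computes
\[
\iota_\O(u) \vee \iota_\K(l) \;=\; \bigwedge \iota_\O[\uo u \cap \uo l], \qquad \iota_\O(v) \wedge \iota_\K(k) \;=\; \bigvee \iota_\K[\dk v \cap \dk k].
\]
These identities turn $k \in \lb_\K(\uo u \cap \uo l)$ into $\iota_\K(k) \leq \iota_\O(u) \vee \iota_\K(l)$, and $u \in \ub_\O(\dk v \cap \dk k)$ into $\iota_\O(v) \wedge \iota_\K(k) \leq \iota_\O(u)$, while $l \ko v$ and $k \ko u$ read simply as $\iota_\K(l) \leq \iota_\O(v)$ and $\iota_\K(k) \leq \iota_\O(u)$. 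In other words, \cref{d:distributive} is precisely the cut rule \eqref{eq:cut-rule-two-cuts} in $L$ restricted to $k,l \in \iota_\K[\K]$ and $u,v \in \iota_\O[\O]$. The direction $(\Leftarrow)$ is then immediate from \cref{r:distributivity-and-cut-rule}.

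For the harder direction $(\Rightarrow)$ the task is to lift the cut rule from the generating sets $\iota_\K[\K] \cup \iota_\O[\O]$ to all of $L$. The classical tool, going back to Erné, is \emph{double foundedness}: whenever $(k,u) \in \K \times \O$ satisfies $k \not\ko u$, there exist $k^\flat \leq k$ in $\K$ and $u^\sharp \geq u$ in $\O$ that are respectively minimal and maximal with $k^\flat \not\ko u^\sharp$. I would derive double foundedness from double compactness by Zorn's lemma: any chain $\{k'\}$ in $\K$ with $k' \leq k$ and $k' \not\ko u$ is codirected, hence has a meet $k^\flat$ in $\K$, and cocompactness of $u$ rules out $k^\flat \ko u$; a symmetric argument produces $u^\sharp$ using compactness of $k^\flat$. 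With double foundedness in hand, I would appeal to the result of \cite{Erne1993} that for doubly founded polarities the weak distributivity property---exactly our \cref{d:distributive}---is equivalent to distributivity of the concept lattice: a hypothetical failure $a \not\leq b$ in $L$ is witnessed by an extremal pair $(k^\flat, u^\sharp)$ with $k^\flat \leq a$, $b \leq u^\sharp$ and $k^\flat \not\ko u^\sharp$, and the extremality of this pair is precisely what is needed to run the cut rule on the bi-dcpo side and derive a contradiction. The main obstacle is this final lifting step; once double foundedness is set up, the cut argument itself amounts to careful manipulation of minimal and maximal witnesses.
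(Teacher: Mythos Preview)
Your proposal is correct and follows essentially the same route as the paper: establish double foundedness of the bi-dcpo from double compactness via Zorn's lemma, then invoke Ern\'e's theorem \cite[Thm.~7.9]{Erne1993} that for doubly founded polarities weak distributivity is equivalent to distributivity of the concept lattice. Your added step of explicitly rewriting the hypotheses of \cref{d:distributive} as the inequalities $\iota_\K(k)\leq\iota_\O(u)\vee\iota_\K(l)$ and $\iota_\O(v)\wedge\iota_\K(k)\leq\iota_\O(u)$ makes the easy direction $(\Leftarrow)$ transparent, but the substance of the argument is the same.
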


\begin{proof}
	We recall from \cite[p.~89]{Wille1985} (see \cite[Def.~26]{GanterWille1999} for a textbook reference) that a polarity $(\K, \O, \ko)$ is called \emph{doubly founded} if, for all $k \in \K$ and $u \in \O$ with $k \not\ko u$, (i) the set $\{v \in \O \mid k \not\ko v\}$ has a maximal element\footnote{In a preordered set, an element is \emph{maximal} if it is above every element above it, and \emph{minimal} if it is below every element below it.} that is above $u$, and (ii) the set $\{l \in \K \mid l \not\ko u\}$ has a minimal element that is below $k$.
	Using Zorn's lemma and double compactness, it is easily shown that every bi-dcpo is doubly founded.
	
	The desired statement then follows from the fact that any bi-dcpo $(\K, \O, \ko)$ is doubly founded and the fact that a doubly founded polarity is weakly distributive (in the terminology of \cite[p.~550]{Erne1993}, i.e.\ it satisfies the condition in \cref{d:distributive}) if and only if its concept lattice is distributive \cite[Thm.~7.9, (c)~$\Leftrightarrow$~(h)]{Erne1993}.
\end{proof}

	We stress that the ``only if'' in the cited result from \cite{Erne1993} in the proof of \cref{t:corr-bi-dcpos-embedded-bi-dcpos} seems to be far from trivial, and by piggybacking on it we have simplified the presentation a lot.

One may wonder whether the distributivity of a bi-dcpo $(\K, \O, \ko)$ is related to the distributivity of $\O$ or $\K$, if these are lattices.
In \cref{t:distributivity-from-O-K} below we show that, if $(\K, \O, \ko)$ is \emph{bicontinuous} and either $\K$ or $\O$ is a distributive lattice, then $(\K, \O, \ko)$ is distributive.


\section{\texorpdfstring{On objects: ko-spaces $\stackrel{1:1}{\leftrightarrow}$ distributive (embedded) bi-dcpos}{On objects: ko-spaces <-> distributive (embedded) bi-dcpos}}

\subsection{\texorpdfstring{Posets $\stackrel{1:1}{\leftrightarrow}$ Raney lattices}{Posets <-> Raney lattices}}

Completely prime pairs will play the role of ``abstract points''.

\begin{definition}[Completely prime pair {\cite[p.~3683]{DaveyPriestley1996}}]
	Let $L$ be a complete lattice.
	A pair $(a, b)$ of elements of $L$ is called \emph{completely prime} (or \emph{splitting} by some authors\footnote{This terminology goes back to the work of Whitman \cite{Whitman1943}, in which the partitions $(\u a, \d b)$ arising from such pairs $(a,b)$ are called \emph{principal splittings}.}) if $\u a$ and $\d b$ partition $L$.\footnote{The terminology ``completely prime pair'' is justified by the following facts, which hold in every complete lattice (\cite[Exercise 10.14, p.~246]{DaveyPriestley2002}): (i) the assignment $(a,b) \mapsto a$ is a bijection from the set of completely prime pairs of $L$ to the set of completely join prime elements of $L$, with inverse $a \mapsto (a, \bigvee (L \setminus \u a))$; (ii) the assignment $(a,b) \mapsto b$ is a bijection from the set of completely prime pairs of $L$ to the set of completely meet prime elements of $L$, with inverse $b \mapsto (\bigwedge (L \setminus \d b), b)$.}
\end{definition}

The following condition characterizes lattices of upsets of posets (as shown in \cref{t:char-Raney} below).

\begin{definition}[Raney lattice]
	A \emph{Raney lattice} is a complete lattice $L$ such that, for all $a, b \in L$ with $a \nleq b$, there is a completely prime pair $(k,u) \in L^2$ such that $k \in \d a$ and $u \in \u b$.
\end{definition}

In other words, a Raney lattice is a complete lattice with enough completely prime pairs.

Raney lattices are also sometimes called \emph{superalgebraic} (and presented as the complete lattices where every element is a join of completely join-prime elements) \cite[p.~381]{BanaschewskiPultr1990/91}.
The terminology ``Raney lattice'' was introduced in \cite{BezhanishviliHarding2020} in honor of Raney's work \cite{Raney1952} via an equivalent condition (a completely distributive complete
lattice that is join-generated by its completely join prime elements). We refer to \cite[Thm.~10.29]{DaveyPriestley2002} for more equivalent conditions.
The overlap of terminology is innocuous since all these conditions characterize lattices of upsets of posets (equipped with the inclusion order).

\begin{theorem}[Posets $\stackrel{1:1}{\leftrightarrow}$ Raney lattices] \label{t:char-Raney}
	There is a bijective correspondence between (isomorphism classes of) 
	\begin{enumerate}
	
		\item
		posets and 
		
		\item
		Raney lattices.
	
	\end{enumerate}
\end{theorem}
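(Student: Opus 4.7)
The plan is to construct the bijection explicitly in both directions and then verify the round trips. From a poset $P$, I would form the complete lattice $\Up(P)$ of upsets, ordered by inclusion. In the other direction, from a Raney lattice $L$ I would extract the poset $\CP(L)$ of completely prime pairs, ordered by $(k_1,u_1) \le (k_2,u_2)$ iff $k_1 \le k_2$ (equivalently, iff $u_2 \le u_1$, since the map $(k,u) \mapsto k$ is a bijection onto the completely join-prime elements and the map $(k,u) \mapsto u$ is a bijection onto the completely meet-prime elements, as recalled in the footnote to the definition).

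First I would check that $\Up(P)$ is a Raney lattice. Completeness is obvious. For the completely prime pairs, observe that for each $x \in P$ the pair $(\u x,\, P \setminus \d x)$ is completely prime in $\Up(P)$: an upset $U$ contains $\u x$ iff $x \in U$, and fails to contain $x$ iff $U \subseteq P \setminus \d x$. Given upsets $A \nsubseteq B$, pick $x \in A \setminus B$; then $\u x \subseteq A$ and $B \subseteq P \setminus \d x$, producing the separating completely prime pair. One also sees that every completely prime pair of $\Up(P)$ is of this form, since the completely join-prime elements of $\Up(P)$ are precisely the principal upsets.

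Next I would verify that, for a Raney lattice $L$, the set $\CP(L)$ forms a poset (antisymmetry follows from the mutual bijections with join-prime/meet-prime elements) and that $L \cong \Up(\CP(L))$ via the maps
\[
\varphi \colon L \longrightarrow \Up(\CP(L)), \quad a \longmapsto \{(k,u) \in \CP(L) \mid k \le a\},
\]
with inverse sending an upset $S$ of $\CP(L)$ to $\bigvee \{k \mid (k,u) \in S\}$. That $\varphi(a)$ is an upset in $\CP(L)$ is immediate from the definition of the order. Injectivity of $\varphi$ is exactly the Raney condition: if $a \nleq b$ there is a completely prime pair $(k,u)$ with $k \le a$ and $b \le u$, whence $(k,u) \in \varphi(a)$ but $(k,u) \notin \varphi(b)$ (since $k \le b$ would force $k \le u$, contradicting $k \nleq u$). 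Surjectivity, and the fact that $\varphi$ preserves the order strictly, then follow from the completely join-prime characterization: every element of $L$ is the join of the completely join-prime elements below it, which is precisely what the Raney property guarantees.

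Finally, the round trip on the poset side gives $\CP(\Up(P)) \cong P$ via $x \mapsto (\u x,\, P \setminus \d x)$, which is an order-isomorphism because $x \le y$ iff $\u x \supseteq \u y$ iff $\u y \subseteq \u x$. I would then package these constructions and note that any order-isomorphism $P \to P'$ lifts to a lattice isomorphism $\Up(P) \to \Up(P')$, and any lattice isomorphism $L \to L'$ restricts to a bijection on completely prime pairs, so the correspondence respects isomorphism classes. The main obstacle is the direction $L \cong \Up(\CP(L))$: all the content lives in verifying that the Raney separation condition is strong enough to recover $L$ from the order on its completely prime pairs, which is where the explicit description of completely join-prime elements and the footnoted bijections do the real work.
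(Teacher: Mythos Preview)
Your approach is essentially the paper's: both directions use $\Up(P)$ and $\CP(L)$, with the same explicit maps $a \mapsto \{(k,u) : k \le a\}$ and $S \mapsto \bigvee\{k : (k,u) \in S\}$, and the same verification that $\Up(P)$ is Raney via principal upsets.

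There is, however, a consistent orientation error running through your write-up. Your claimed equivalence ``$k_1 \le k_2$ iff $u_2 \le u_1$'' is backwards: for completely prime pairs, $k_1 \le k_2$ implies $\u k_1 \supseteq \u k_2$, hence $\d u_1 = L \setminus \u k_1 \subseteq L \setminus \u k_2 = \d u_2$, hence $u_1 \le u_2$. More seriously, with your chosen order $(k_1,u_1) \le (k_2,u_2) \iff k_1 \le k_2$, the set $\varphi(a) = \{(k,u) : k \le a\}$ is a \emph{downset} of $\CP(L)$, not an upset, and the map $x \mapsto (\u x,\, P \setminus \d x)$ is order-\emph{reversing} (your own computation gives $x \le y \iff \u y \subseteq \u x$, which under your convention means the image of $y$ is below the image of $x$). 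The paper avoids this by ordering $\CP(L)$ via $(k,u) \le (l,v) \iff k \ge l \iff u \le v$; with that convention $\varphi(a)$ is genuinely an upset and the round trip on posets is an order-isomorphism. Once you flip the sign, your argument goes through and matches the paper's.
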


\begin{proof}
	We prove the statement with the following assignments witnessing the correspondence:
	\begin{enumerate}
	
		\item
		to a poset $X$, we assign the Raney lattice $\Up(X)$ of upsets of $X$;
		
		\item
		to a Raney lattice $L$, we assign the poset $\CP(L)$ of completely prime pairs in $L$, ordered by $(k, u) \leq (l,v) \iff k \geq l \iff u \geq v$. 
	
	\end{enumerate}
	
	Moreover, we prove that the following isomorphisms witness the bijectivity of the correspondence:
	\begin{enumerate}
	
		\item
		for every poset $X$, the functions
		\begin{align*}
			X & \longrightarrow \CP(\Up(X)) & \CP(\Up(X)) & \longrightarrow X\\
			x & \longmapsto (\u x, X \setminus \d x) & (A,B) & \longmapsto \text{unique $x \in X$ such that $x \in A \setminus B$};
		\end{align*}
	
		\item
		for every Raney lattice $L$, the functions
		\begin{align*}
			L & \longrightarrow \Up(\CP(L)) & \Up(\CP(L)) & \longrightarrow L\\
			a & \longmapsto \{(k,u) \in \CP(L) \mid k \leq a\} = \{(k,u) \in \CP(L) \mid a \nleq u\} & A & \longmapsto \bigvee_{(k,u) \in A} k.
		\end{align*}
	\end{enumerate}

	We prove that, for every poset $X$, $\Up(X)$ is indeed a Raney lattice.
	Let $A, B \in \Up(X)$ be such that $A \nsubseteq B$.
	Then there is $x \in A \setminus B$.
	The pair $(\u x, X \setminus \d x)$ is completely prime, and we have $\u x \subseteq A$ and $B \subseteq X \setminus \d x$.
	This shows that $\Up(X)$ is indeed a Raney lattice.
	
	The proof of the fact that the maps $X \to \CP(\Up(X))$ and $\CP(\Up(X)) \to X$ are inverses is straightforward.
	
	Let $L$ be a Raney lattice.
	We prove that the maps $\alpha \colon L \to \Up(\CP(L))$ and $\beta \colon \Up(\CP(L)) \to L$ are inverses.
	The function $\alpha$ is clearly order-preserving.
	It is also order-reflecting because $L$ is a Raney lattice.
	Thus, it is injective.
	It is then enough to show that $\alpha \circ \beta$ is the identity on $\Up(\CP(L))$.
	Let $A$ be an upset of $\CP(L)$.
	We prove $\alpha(\bigvee_{(k,u) \in A} k) = A$.
	The inclusion $A \subseteq \alpha(\bigvee_{(k,u) \in A} k)$ is obvious. 
	Let us prove the converse inclusion, i.e.\ $\alpha(\bigvee_{(k,u) \in A} k) \subseteq A$.
	Let $(a_0,b_0) \in \alpha(\bigvee_{(k,u) \in A} k)$.
	Then, $a_0 \leq \bigvee_{(k,u) \in A} k$.
	Since $a_0$ is completely join prime, there is $(k,u) \in A$ such that $a_0 \leq k$.
	Since $A$ is an upset, it follows that $(a_0, b_0) \in A$.
	This proves that $\alpha \circ \beta$ is the identity, and so that $\alpha$ and $\beta$ are inverses.
\end{proof}

\Cref{t:char-Raney} is the object-level part of the duality baptized in \cite{BezhanishviliHarding2020} as \emph{Raney duality}, having its origins in the work of Raney \cite{Raney1952}, where the object level of the correspondence was established (see also Balachandran \cite{Balachandran1955} and Bruns \cite{Bruns1959}). The statement of the duality can be found for example in \cite[Thm.~2.4]{BezhanishviliHarding2020}.

\subsection{\texorpdfstring{Bifounded purified polarities $\stackrel{1:1}{\leftrightarrow}$ bifounded double base lattices}{Bifounded purified polarities <-> bifounded double base lattices}}

In the next subsection we will prove that every (embedded) distributive bi-dcpo is spatial.
For this, we shall prove that the complete lattice $L$ in a distributive embedded bi-dcpo $(L, \K, \O)$ is isomorphic to the lattice of upsets of some poset.
Not every distributive complete lattice is isomorphic to the lattice of upsets of some poset.
One needs a further condition, which we call \emph{bifoundedness}; a complete lattice is isomorphic to the lattice of upsets of some poset if and only if it is distributive and bifounded (\cref{t:char-Raney-dist-bifounded}). A simple application of Zorn's lemma will show that the complete lattice $L$ in any embedded bi-dcpo $(L, \K, \O)$ is bifounded (\cref{t:bifounded}).

\begin{definition}[$\neswarrow$]\hfill
	\begin{enumerate}
	
		\item
		If $(\K, \O, \ko)$ is a polarity, $k \in \K$ and $u \in \O$, we write $k \neswarrow u$ if $u$ is a maximal element of $\{v \in \O \mid k \not\ko v\}$ and $k$ a minimal element of $\{l \in \K \mid l \not\ko u\}$ (see, e.g.\ {\cite[Def.~25]{GanterWille1999}}).
			
		\item
		For a complete lattice $L$ and $a,b \in L$, we write $a \neswarrow b$ if $b$ is a maximal element of $\{c \in L \mid a \nleq c\}$ and $a$ a minimal element of $\{c \in L \mid c \nleq b\}$.

	\end{enumerate}
\end{definition}

\begin{definition}[Bifounded]\hfill
	\begin{enumerate}
	
		\item
		A polarity $(\K, \O, \ko)$ is \emph{bifounded} if, for all $k \in \K$ and $u \in \O$ with $k \not\ko u$, there are $l \in \dk k$ and $v \in \uo u$ such that $l \neswarrow v$.
		
		\item
		A complete lattice $L$ is \emph{bifounded} if, for all $a,b \in L$ with $a \nleq b$, there are $k \in \d a$ and $u \in \u b$ such that $k \neswarrow u$.
	
	\end{enumerate}
	
\end{definition}

\begin{lemma}[$\neswarrow$ in polarity $=$ $\neswarrow$ in double base lattice]\label{l:same}
	Let $(L, \K, \O)$ be a double base lattice.
	The set of pairs $(a,b) \in L \times L$ such that $a \neswarrow b$ in the complete lattice $L$ coincides with the set of pairs $(k,u) \in \K \times \O$ such that $k \neswarrow u$ in the polarity $(\K, \O, \leq)$.
\end{lemma}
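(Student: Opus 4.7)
The plan is to prove the two containments separately, using density (i.e.\ that $\K$ is join-dense and $\O$ is meet-dense in $L$) to transfer extremality back and forth.

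\textbf{$(\subseteq)$} Suppose $(a,b) \in L \times L$ satisfies $a \neswarrow b$ in $L$. I first claim $b \in \O$. By density, $b = \bigwedge \{u \in \O \mid b \leq u\}$. Since $a \nleq b$, there must be some $u \in \O$ with $b \leq u$ and $a \nleq u$. But $u$ then belongs to $\{c \in L \mid a \nleq c\}$ and lies above $b$, so by maximality $b = u \in \O$. Symmetrically, density of $\K$ in $L$ gives $a \in \K$. Now maximality of $b$ in $\{v \in \O \mid a \nleq v\}$ follows at once from the fact that this set is contained in $\{c \in L \mid a \nleq c\}$, and symmetrically for the minimality of $a$.

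\textbf{$(\supseteq)$} Suppose $(k,u) \in \K \times \O$ satisfies $k \neswarrow u$ in the polarity. To see that $u$ is maximal in $\{c \in L \mid k \nleq c\}$, take any $c \in L$ with $u \leq c$ and $k \nleq c$. Using $c = \bigwedge\{v \in \O \mid c \leq v\}$ and $k \nleq c$, pick $v \in \O$ with $c \leq v$ and $k \nleq v$. Then $u \leq v$ and $v \in \{v' \in \O \mid k \nleq v'\}$, so the polarity-level maximality of $u$ gives $u = v$, hence $c \leq v = u$ and combined with $u \leq c$ we get $c = u$. The minimality statement for $k$ is entirely dual: given $c \leq k$ with $c \nleq u$, use join-density of $\K$ to produce $l \in \K$ with $l \leq c$ and $l \nleq u$, apply polarity-level minimality to conclude $l = k$, and hence $c = k$.

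The main subtlety is making sure that the witnesses produced by density automatically fall on the ``correct'' side of the strict inequality (above $b$/$u$ in the meet case, below $a$/$k$ in the join case); both times this follows because the defect $a \nleq b$ (respectively $k \nleq c$) prevents $a$ from lying below the whole meet, so some factor of the meet must also be a non-upper-bound of $a$. No other axioms of bi-dcpos or distributivity are needed—only the density clause of a double base lattice—so the lemma holds in the generality stated.
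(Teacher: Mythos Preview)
Your proof is correct and follows essentially the same route as the paper's: in both directions you use join-density of $\K$ and meet-density of $\O$ to produce a witness in $\K$ (resp.\ $\O$) that the extremality hypothesis then forces to coincide with the given element. The only differences are cosmetic (which of $a\in\K$ or $b\in\O$ is argued first, and which extremality is spelled out first in the $(\supseteq)$ direction).
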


\begin{proof}
	($\subseteq$).
	Let $a,b \in L$ be such that $a \neswarrow b$ in the complete lattice $L$.
	We start by proving $a \in \K$ and $b \in \O$.
	Since $a \nleq b$, by density there is $k \in \K \cap \d a$ with $k \nleq b$.
	From $a \neswarrow b$, $k \nleq b$ and $k \leq a$ we deduce $a = k$, and so $a \in \K$.
	Dually, $b \in \O$.
	Since $a \neswarrow b$ in $L$, it is clear that we have $a \neswarrow b$ in the polarity $(\K, \O, \leq)$.
	
	($\supseteq$).
	Let $(k,u) \in \K \times \O$ be such that $k \neswarrow u$ in the polarity $(\K, \O, \leq)$, and let us prove that we have $k \neswarrow u$ in the complete lattice $L$.
	We first prove that $k$ is a minimal element of $\{c \in L \mid c \nleq u\}$ in $L$.
	Let $c \in \d k$ be such that $c \nleq u$.
	By density, there is $l \in \K \cap \d c$ with $l \nleq u$.
	Then, $l \leq c \leq k$, and so $l \leq k$.
	From $k \neswarrow u$ in $(\K, \O, \leq)$, $l \nleq u$ and $l \leq k$ we deduce $l = k$ and so $c = k$.
	Hence, $k$ is a minimal element of $\{c \in L \mid c \nleq u\}$ in $L$.
	Dually, $u$ is a maximal element of $\{c \in L \mid k \nleq c\}$.
\end{proof}

\begin{proposition} [Bifounded polarity $\Leftrightarrow$ bifounded concept lattice]\label{p:bifounded-iff-bifounded}
	A polarity is bifounded if and only if its concept lattice is bifounded.
\end{proposition}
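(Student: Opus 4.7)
The plan is to use \cref{l:same} as a bridge: after forming the double base lattice $(\mathfrak{B}(\K, \O, \ko), \iota_\K[\K], \iota_\O[\O])$, the $\neswarrow$-pairs living in the complete lattice $\mathfrak{B}(\K, \O, \ko)$ coincide with those living in the polarity $(\iota_\K[\K], \iota_\O[\O], \leq)$, and the latter is (via the basic functions) isomorphic to $(\K, \O, \ko)$ as a polarity. Hence bifoundedness of one side transfers to the other, provided one can produce, starting from two arbitrary lattice elements with $a \nleq b$, a k-element below $a$ and an o-element above $b$ that still separate. This is exactly what density gives us.

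For the direction ``polarity bifounded $\Rightarrow$ concept lattice bifounded'': I would take $a, b \in \mathfrak{B}(\K, \O, \ko)$ with $a \nleq b$. Using that $\iota_\K[\K]$ is join-dense, pick $k \in \K$ with $\iota_\K(k) \leq a$ and $\iota_\K(k) \nleq b$. Using that $\iota_\O[\O]$ is meet-dense, pick $u \in \O$ with $b \leq \iota_\O(u)$ and $\iota_\K(k) \nleq \iota_\O(u)$. The last inequality translates to $k \not\ko u$ in the polarity. Bifoundedness of the polarity then yields $l \in \K$ and $v \in \O$ with $l \leq k$, $u \leq v$, and $l \neswarrow v$ in the polarity; by \cref{l:same} the same $\neswarrow$ relation holds in $\mathfrak{B}(\K, \O, \ko)$, and $\iota_\K(l) \leq \iota_\K(k) \leq a$, $b \leq \iota_\O(u) \leq \iota_\O(v)$ gives the witnesses needed for bifoundedness of the concept lattice.

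For the converse ``concept lattice bifounded $\Rightarrow$ polarity bifounded'': given $k \in \K$ and $u \in \O$ with $k \not\ko u$, I would view $\iota_\K(k) \nleq \iota_\O(u)$ in $\mathfrak{B}(\K, \O, \ko)$ and apply bifoundedness of $\mathfrak{B}(\K, \O, \ko)$ to obtain $a \leq \iota_\K(k)$ and $b \geq \iota_\O(u)$ with $a \neswarrow b$ in $\mathfrak{B}(\K, \O, \ko)$. By \cref{l:same}, automatically $a \in \iota_\K[\K]$ and $b \in \iota_\O[\O]$, so $a = \iota_\K(l)$ and $b = \iota_\O(v)$ for some $l \in \K$ with $l \leq k$ and $v \in \O$ with $u \leq v$; and the same lemma tells us $l \neswarrow v$ in $(\K, \O, \ko)$.

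I do not foresee a serious obstacle here: the whole argument is a direct translation through \cref{l:same} and the two density axioms of a double base lattice. The only mild subtlety is remembering to apply density twice in the first direction, in order to replace an arbitrary failure of $a \leq b$ by a failure $k \not\ko u$ genuinely coming from the polarity.
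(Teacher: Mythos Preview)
Your proposal is correct and follows essentially the same route as the paper's proof. The paper streamlines the presentation by assuming without loss of generality that the polarity is purified and then working directly inside a double base lattice $(L,\K,\O)$, whereas you spell out the same argument explicitly through the concept lattice and the basic functions $\iota_\K,\iota_\O$; in both cases the two directions hinge on density (to pass from arbitrary $a\nleq b$ to some $k\not\ko u$) together with \cref{l:same} (to transfer $\neswarrow$ between the polarity and the complete lattice).
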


\begin{proof}
	 Without loss of generality, we can assume the polarity to be purified and work in a double base lattice $(L, \K, \O)$.
	
	($\Rightarrow$).
	Suppose $(\K, \O, \leq)$ is bifounded.
	Let $a,b \in L$ be such that $a \nleq b$.
	By density, there are $k \in \K \cap \d a$ and $u \in \O \cap \u b$ such that $k \nleq u$.
	Then, there are $l \in \K \cap \d k$ and $v \in \O \cap \u u$ such that $l \neswarrow v$ in the polarity $(\K, \O, \leq)$, and hence $l \neswarrow v$ in the lattice $L$ by \cref{l:same}.
	
	($\Leftarrow$).
	Suppose $L$ is bifounded. Let $k \in \K$ and $u \in \O$ be such that $k \nleq u$.
	There are $l,v \in L$ such that $l \leq k$, $v \geq u$ and $l \neswarrow v$ in the complete lattice $L$.
	By \cref{l:same}, $l \in \K$, $v \in \O$ and $l \neswarrow v$ in $(\K, \O, \leq)$.
\end{proof}

\begin{remark}[Bifounded vs double founded]
	We compare the notion of bifoundedness with the similar, stronger notion of \emph{double foundedness}, which exists both in the context of complete lattices and in the context of polarities (for both concepts see \cite[p.~89]{Wille1985} or, e.g., \cite[Def.~26, p.~32]{GanterWille1999}). 
It is easily seen that every doubly founded polarity is bifounded, and that the converse is false (a witness being $(\N \cup \{+\infty\}, \N \cup \{+\infty\}, \leq)$).
	The first reason why we work with bifoundedness rather than double foundedness is that the theorem ``isomorphic to the lattice of upsets $\Leftrightarrow$ distributive + bifounded'' would be false if bifoundedness was replaced by double foundedness: for example, the lattice of upsets of $\omega^\op$ is not doubly founded. Moreover, bifoundedness is well-behaved from a further perspective: the concept lattice of a polarity is bifounded if and only if the polarity satisfies a simple condition, which we still call bifoundedness (\cref{p:bifounded-iff-bifounded}). In contrast, it is false that a polarity is doubly founded if and only if its concept lattice is doubly founded, with the standard terminology.
	(Instead, a complete lattice $L$ is doubly founded if and only if \emph{every} polarity obtained from a double base lattice $(L, \K, \O)$ with $L$ as the first component is doubly founded.)
	This comparison raises the question of whether the usage of double foundedness in the proof of \cref{t:corr-bi-dcpos-embedded-bi-dcpos} could be replaced by bifoundedness: more precisely, is a bifounded polarity weakly distributive (in the sense of \cite[p.~550]{Erne1993}) if and only if its concept lattice is distributive?
\end{remark}

\subsection{\texorpdfstring{Ko-spaces $\stackrel{1:1}{\leftrightarrow}$ distributive (embedded) bi-dcpos}{Ko-spaces <-> distributive (embedded) bi-dcpos}}

The following is the key lemma, and outlines the main usage of distributivity.

\begin{lemma}[The key lemma: completely prime $\Leftrightarrow$ $\neswarrow$]\label{l:weak-strong}
	Let $L$ be a distributive complete lattice. A pair $(k,u) \in L^2$ is completely prime if and only if $k \neswarrow u$.
\end{lemma}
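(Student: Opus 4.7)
My plan is to prove the two implications separately, noting in advance that distributivity is used in only one direction: namely, the implication $k \neswarrow u \Rightarrow (k,u)$ completely prime. The forward direction is essentially definition chasing.

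For the direction $(\Rightarrow)$, I assume $(k,u)$ is completely prime, so $L = {\u k} \sqcup {\d u}$. Since $u \in \d u$ we have $u \notin \u k$, i.e.\ $k \nleq u$, so $u$ belongs to $\{v \in L \mid k \nleq v\}$. For maximality, let $v \geq u$ with $k \nleq v$; then $v \notin \u k$, so $v \in \d u$ by the partition, giving $v \leq u$ and hence $v = u$. The symmetric argument shows $k$ is minimal in $\{l \in L \mid l \nleq u\}$. Thus $k \neswarrow u$.

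For the direction $(\Leftarrow)$, which is where the real content lies, I assume $k \neswarrow u$, so in particular $k \nleq u$. Fix an arbitrary $a \in L$; I need to show $k \leq a$ or $a \leq u$ (these being mutually exclusive, since $k \leq a \leq u$ would contradict $k \nleq u$). The key step, and the place where distributivity does the work, is the following: if both $k \leq a \vee u$ and $a \wedge k \leq u$ held, then by distributivity
\[
k \;=\; k \wedge (a \vee u) \;=\; (k \wedge a) \vee (k \wedge u) \;\leq\; u \vee u \;=\; u,
\]
contradicting $k \nleq u$. Therefore at least one of $k \nleq a \vee u$ and $a \wedge k \nleq u$ must fail. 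In the first case, $a \vee u \in \{v \mid k \nleq v\}$ and $a \vee u \geq u$, so maximality of $u$ forces $a \vee u = u$, i.e.\ $a \leq u$. In the second case, $a \wedge k \in \{l \mid l \nleq u\}$ and $a \wedge k \leq k$, so minimality of $k$ forces $a \wedge k = k$, i.e.\ $k \leq a$.

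The only nontrivial step is the little distributive computation exhibiting a ``cut''; this is exactly the same algebraic fact encoded in Remark~\ref{r:distributivity-and-cut-rule} and \eqref{eq:cut-rule-two-cuts}, and it is where the hypothesis that $L$ is distributive gets used. Everything else is bookkeeping with the definitions of $\neswarrow$ and of completely prime pairs.
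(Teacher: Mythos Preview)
Your proof is correct and follows essentially the same route as the paper: the forward direction is trivial, and the converse uses distributivity (the paper invokes the cut-rule formulation of Remark~\ref{r:distributivity-and-cut-rule}, you unfold the computation $k = k \wedge (a \vee u) = (k\wedge a)\vee(k\wedge u) \leq u$) together with the maximality of $u$ and minimality of $k$. One small slip of phrasing: after the distributive step you write that at least one of $k \nleq a \vee u$ and $a \wedge k \nleq u$ ``must fail'', but you mean that at least one must \emph{hold}; your subsequent case analysis treats exactly these two cases, so the argument is fine.
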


\begin{proof}
	$(\Rightarrow)$. This implication is immediate (and does not require distributivity).
	
	$(\Leftarrow)$. 
	Suppose $k \neswarrow u$.
	Then $k \nleq u$ and so $\u k$ and $\d u$ are disjoint.
	We prove that their union is $L$.
	Let $a \in L$.
	We should prove that $a \in \u k$ or $a \in \d u$, i.e.\ that $a \nleq u$ implies $k \leq a$.
	Suppose $a \nleq u$.
	Then $u \lneq u \lor a$.
	Thus, since $k \neswarrow u$, we have $k \leq u \lor a$.
	By distributivity of $L$ (see \cref{r:distributivity-and-cut-rule}), from $k \nleq u$ and $k \leq u \lor a$ we deduce $a \land k \nleq u$.
	From $k \neswarrow u$, $a \land k \nleq u$ and $a \land k \leq k$, we deduce $k = k \land a$, i.e.\ $k \leq a$.
\end{proof}

\begin{theorem}[Raney $\Leftrightarrow$ distributive + bifounded] \label{t:char-Raney-dist-bifounded}
	A complete lattice is a Raney lattice if and only if it is distributive and bifounded.
\end{theorem}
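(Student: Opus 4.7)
The plan is to use the key lemma (\cref{l:weak-strong}) to convert between completely prime pairs and the $\neswarrow$ relation, so that Raney-ness and bifoundedness become essentially the same statement modulo distributivity.

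For the $(\Leftarrow)$ direction, assume $L$ is distributive and bifounded, and let $a,b \in L$ with $a \nleq b$. Bifoundedness supplies $k \in \d a$ and $u \in \u b$ with $k \neswarrow u$. By the key lemma (\cref{l:weak-strong}), since $L$ is distributive, $(k,u)$ is completely prime. This witnesses the Raney condition at $(a,b)$.

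For the $(\Rightarrow)$ direction, suppose $L$ is a Raney lattice. To prove bifoundedness, let $a \nleq b$, pick a completely prime pair $(k,u)$ with $k \leq a$ and $u \geq b$, and use the easy implication of the key lemma (which does not require distributivity) to conclude $k \neswarrow u$. For distributivity, I would invoke \cref{t:char-Raney}: every Raney lattice is order-isomorphic to $\Up(X)$ for some poset $X$, and lattices of upsets are distributive. (Alternatively, one may argue directly: if $a \land (b \lor c) \nleq (a \land b) \lor (a \land c)$, take a completely prime pair $(k,u)$ separating them; then $k \leq b \lor c$ together with completeness of the pair forces $k \leq b$ or $k \leq c$, which combined with $k \leq a$ contradicts $k \nleq u \geq (a \land b) \lor (a \land c)$.)

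There is no real obstacle here, since the key lemma has already done the heavy lifting; the proof mainly reorganizes earlier results. The only subtlety to double-check is that the ``easy'' direction of \cref{l:weak-strong} truly does not need distributivity (it doesn't: a completely prime pair $(k,u)$ partitions $L$ into $\u k$ and $\d u$, and maximality of $u$ in $\{c \mid k \nleq c\}$ and minimality of $k$ in $\{c \mid c \nleq u\}$ are immediate from this partition), and that \cref{t:char-Raney} can legitimately be invoked in the forward direction.
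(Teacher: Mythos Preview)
Your proposal is correct and follows essentially the same approach as the paper's proof: both use \cref{t:char-Raney} to obtain distributivity from Raney-ness, and both use the key lemma \cref{l:weak-strong} to identify the Raney condition with bifoundedness in the presence of distributivity. The paper packages the remaining equivalence into a single sentence (``by \cref{l:weak-strong}, a distributive complete lattice is a Raney lattice if and only if it is bifounded''), whereas you unpack the two directions explicitly; your additional direct argument for distributivity is a nice alternative but not needed.
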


\begin{proof}
	Any Raney lattice is clearly distributive because it is the lattice of upsets of some poset (\cref{t:char-Raney}).
	By \cref{l:weak-strong}, a distributive complete lattice is a Raney lattice if and only if it is bifounded.
\end{proof}

\begin{theorem}[Bi-dcpo $\Rightarrow$ bifounded] \label{t:bifounded}
	Every bi-dcpo is bifounded (as a purified polarity), and the complete lattice in any embedded bi-dcpo is bifounded.
\end{theorem}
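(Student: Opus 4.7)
The plan is to prove the first claim—that every bi-dcpo $(\K, \O, \ko)$ is bifounded—via a single application of Zorn's lemma on pairs; the second claim (that the complete lattice $L$ in any embedded bi-dcpo $(L, \K, \O)$ is bifounded) will then follow from \cref{p:bifounded-iff-bifounded} applied to the associated polarity $(\K, \O, \leq)$, which is a bi-dcpo by \cref{t:one-to-one} and whose concept lattice is $L$ up to isomorphism by \cref{t:fundam-thm-FCA}.

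For the core argument I would fix $k_0 \in \K$ and $u_0 \in \O$ with $k_0 \not\ko u_0$ and consider the set
\[
S \coloneqq \{(l, v) \in \K \times \O \mid l \leq k_0,\ v \geq u_0,\ l \not\ko v\}
\]
ordered by $(l, v) \preceq (l', v')$ iff $l' \leq l$ in $\K$ and $v \leq v'$ in $\O$, so that moving up in $S$ refines the counterexample by shrinking the k-element and enlarging the o-element. The set is nonempty since $(k_0, u_0) \in S$, and given a nonempty chain $\{(l_\alpha, v_\alpha)\}_\alpha$, the (co)directedness axiom supplies a candidate upper bound $(l^*, v^*) \coloneqq (\bigwedge_\alpha l_\alpha, \bigvee_\alpha v_\alpha)$, automatically satisfying $l^* \leq k_0$ and $v^* \geq u_0$.

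The main obstacle will be showing $l^* \not\ko v^*$, so that the candidate lies in $S$, and I expect this to be precisely where both halves of double compactness enter. Assuming for contradiction $l^* \ko v^*$, cocompactness of the o-element $v^*$ applied to the codirected family $\{l_\alpha\}$ will produce some $\alpha_0$ with $l_{\alpha_0} \ko v^*$; then compactness of the k-element $l_{\alpha_0}$ applied to the directed family $\{v_\alpha\}$ will produce some $\alpha_1$ with $l_{\alpha_0} \ko v_{\alpha_1}$; picking an index $\alpha_2$ above both $\alpha_0$ and $\alpha_1$ in the chain (so $l_{\alpha_2} \leq l_{\alpha_0}$ and $v_{\alpha_2} \geq v_{\alpha_1}$) and using weakening will give $l_{\alpha_2} \ko v_{\alpha_2}$, contradicting $(l_{\alpha_2}, v_{\alpha_2}) \in S$.

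Once Zorn's lemma yields a maximal $(l, v) \in S$, checking $l \neswarrow v$ should be routine: for $w \in \O$ with $w \geq v$ and $l \not\ko w$, the pair $(l, w)$ lies in $S$ and is $\preceq$-above $(l, v)$, so $w = v$, giving maximality of $v$ in $\{w \in \O \mid l \not\ko w\}$; the symmetric argument on the $\K$-side, using $m \leq l$ with $m \not\ko v$, gives minimality of $l$ in $\{m \in \K \mid m \not\ko v\}$. Since $l \leq k_0$ and $v \geq u_0$, this exhibits the witnesses required for bifoundedness of the polarity, and \cref{p:bifounded-iff-bifounded} then delivers the bifoundedness of the concept lattice.
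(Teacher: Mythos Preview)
Your proposal is correct and follows essentially the same approach as the paper's proof, which simply says ``using double compactness, an application of Zorn's lemma shows that every bi-dcpo is bifounded'' and then invokes \cref{t:one-to-one} and \cref{p:bifounded-iff-bifounded} for the embedded case. You have filled in exactly the details the paper leaves implicit: the simultaneous Zorn argument on pairs, with double compactness ensuring that chain suprema stay in $S$, is the natural implementation of that sketch.
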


\begin{proof}
	Using double compactness, an application of Zorn's lemma shows that every bi-dcpo is bifounded.
	By the one-to-one correspondence between bi-dcpos and embedded bi-dcpos (\cref{t:one-to-one}), and by \cref{p:bifounded-iff-bifounded}, the complete lattice in any embedded bi-dcpo is bifounded.
\end{proof}

\begin{theorem} [Distributive embedded bi-dcpo $\Rightarrow$ Raney] \label{t:distr-spatial}
	If the lattice $L$ in an embedded bi-dcpo $(L, \K, \O)$ is distributive, it is a Raney lattice.
\end{theorem}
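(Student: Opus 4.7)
The statement is an immediate corollary of the two theorems just proved, so the plan is essentially to invoke them in sequence. My strategy is as follows.

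First, I would recall Theorem \ref{t:char-Raney-dist-bifounded}, which gives the characterization: a complete lattice is a Raney lattice if and only if it is both distributive and bifounded. This reduces the problem to checking these two conditions for the complete lattice $L$ in the given embedded bi-dcpo $(L, \K, \O)$.

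Distributivity of $L$ is given by hypothesis, so the only remaining content is bifoundedness. For this I would cite Theorem \ref{t:bifounded}, which asserts precisely that the complete lattice in any embedded bi-dcpo is bifounded (the underlying reason being a Zorn's lemma argument powered by double compactness, together with \cref{p:bifounded-iff-bifounded} transferring bifoundedness between the polarity side and the lattice side).

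Combining these two facts, $L$ is distributive and bifounded, hence a Raney lattice by \cref{t:char-Raney-dist-bifounded}. There is no real obstacle here, since the key lemma \ref{l:weak-strong} (equating $\neswarrow$-pairs with completely prime pairs under distributivity) and the Zorn's lemma argument for bifoundedness have already done the work; this theorem simply packages them together.
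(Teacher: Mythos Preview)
Your proposal is correct and essentially matches the paper's proof. The only cosmetic difference is that the paper cites \cref{t:bifounded} together with the key lemma \cref{l:weak-strong} directly, whereas you route through the packaged characterization \cref{t:char-Raney-dist-bifounded} (whose proof is itself just an application of \cref{l:weak-strong}); the underlying argument is identical.
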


\begin{proof}
	By \cref{t:char-Raney-dist-bifounded,t:bifounded}.
\end{proof}

\begin{corollary} [Distributivity $\Leftrightarrow$ spatiality] \label{c:spatial-iff-distributive}\hfill
	\begin{enumerate}
		
		\item \label{i:epl-spatial-distr}
		An embedded bi-dcpo is spatial if and only if it is distributive.
	
		\item \label{i:pl-spatial-distr}
		A bi-dcpo is spatial if and only if it is distributive.
	
	\end{enumerate}
\end{corollary}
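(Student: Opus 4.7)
The plan is to prove part \eqref{i:epl-spatial-distr} first, where essentially all the hard work has already been done, and then to derive \eqref{i:pl-spatial-distr} by transferring the result across the correspondence between bi-dcpos and embedded bi-dcpos.

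For \eqref{i:epl-spatial-distr}, the strategy is the chain of equivalences: $(L, \K, \O)$ spatial $\iff$ $L \cong \Up(X)$ for some poset $X$ $\iff$ $L$ is a Raney lattice $\iff$ $L$ is distributive. The first equivalence is exactly the remark recorded just after the definition of a spatial embedded bi-dcpo. The second equivalence is Theorem \ref{t:char-Raney}. So the whole statement reduces to showing that, within the world of embedded bi-dcpos, being a Raney lattice is equivalent to being a distributive lattice.

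For this final equivalence, the forward direction is immediate since every lattice of upsets is distributive. The reverse direction is precisely Theorem \ref{t:distr-spatial}, which is available and whose proof combines Theorem \ref{t:bifounded} (bifoundedness of the lattice in any embedded bi-dcpo, obtained by a Zorn's lemma argument from double compactness) with the key Lemma \ref{l:weak-strong} (under distributivity, $\neswarrow$ already witnesses complete primeness). This is the real heart of the matter, and it has already been dispatched; everything I need to do here is just to assemble the pieces. With this, \eqref{i:epl-spatial-distr} is proved.

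For \eqref{i:pl-spatial-distr}, I would invoke Theorem \ref{t:corr-bi-dcpos-embedded-bi-dcpos}, which restricts the bijection of Theorem \ref{t:one-to-one} to distributive bi-dcpos and distributive embedded bi-dcpos. The only remaining point to check is that this bijection also restricts on the spatial side: given a ko-space $(X, \K, \O)$, the bi-dcpo $(\K, \O, \subseteq)$ corresponds under Theorem \ref{t:one-to-one} to the embedded bi-dcpo $(\Up(X), \K, \O)$ of Example \ref{ex:d-proto-gives-embedded-bi-dcpo} (since by density the concept lattice of $(\K, \O, \subseteq)$ is isomorphic to $\Up(X)$, with the basic functions corresponding to the obvious inclusions $\K, \O \hookrightarrow \Up(X)$), and conversely a spatial embedded bi-dcpo $(\Up(X), \K, \O)$ comes back via Theorem \ref{t:one-to-one} to the spatial bi-dcpo $(\K, \O, \subseteq)$. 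This last step is routine bookkeeping rather than a genuine obstacle; combining it with \eqref{i:epl-spatial-distr} yields \eqref{i:pl-spatial-distr}.
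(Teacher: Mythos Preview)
Your proposal is correct and follows essentially the same route as the paper: for \eqref{i:epl-spatial-distr} you reduce spatiality to $L$ being a Raney lattice via the remark after the definition and \cref{t:char-Raney}, then invoke \cref{t:distr-spatial} (bifoundedness plus \cref{l:weak-strong}) for the distributive direction; for \eqref{i:pl-spatial-distr} you transfer across \cref{t:corr-bi-dcpos-embedded-bi-dcpos}, exactly as the paper does. The only cosmetic difference is that the paper, in the $(\Leftarrow)$ direction of \eqref{i:epl-spatial-distr}, re-verifies $\u x \in \K$ and $X \setminus \d x \in \O$ explicitly via \cref{l:same} and \cref{l:weak-strong}, whereas you appeal directly to the remark (whose footnote argument uses only density); both are valid.
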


\begin{proof}
	\eqref{i:epl-spatial-distr}.
	($\Rightarrow$). This implication follows from the fact that joins and meets in the lattice of upsets of a poset are unions and intersections, and that every power set is distributive.
	($\Leftarrow$).
	Suppose that $(L, \K, \O)$ is a distributive embedded bi-dcpo.
	Then, by \cref{t:distr-spatial}, $L$ is a Raney lattice, and so there is a poset $X$---namely, the set of completely prime pairs of $L$---such that $L \cong \Up(X)$.
	Without loss of generality, we may suppose $L = \Up(X)$.
	For each $x \in X$, the pair $(\u x, X \setminus \d x)$ is a completely prime pair of $L = \Up(X)$.
	By \cref{l:same,l:weak-strong}, $\u x \in \K$ and $X \setminus \d x \in \O$.
	The remaining properties are easily verified (using that arbitrary joins and meets in $\Up(X)$ are unions and intersections).
	
	\eqref{i:pl-spatial-distr}. $(\Rightarrow)$.
	Let $(X, \K, \O)$ be a ko-space.
	We shall prove that the bi-dcpo $(\K, \O, \subseteq)$ is distributive.
	By \eqref{i:epl-spatial-distr}, the embedded bi-dcpo $(\Up(X), \K, \O)$ is distributive, and by \cref{t:corr-bi-dcpos-embedded-bi-dcpos} also the bi-dcpo $(\K, \O, \subseteq)$ is distributive.
	$(\Leftarrow)$. Suppose that $(\K, \O, \ko)$ is a distributive bi-dcpo.
	Then, by \cref{t:corr-bi-dcpos-embedded-bi-dcpos} and \eqref{i:epl-spatial-distr}, the associated embedded bi-dcpo is spatial. It follows that also $(\K, \O, \ko)$ is such.
\end{proof}

\Cref{c:spatial-iff-distributive} is analogous to the fact that a continuous lattice is spatial if and only if it is distributive \cite[Thm.~V-5.5]{GierzHofmannEtAl2003}.
In conclusion, in our context, distributivity, spatiality and Gentzen's cut rule coincide.

We now have all the ingredients for (i) a bijection between ko-spaces and distributive embedded bi-dcpos and (ii) a bijection between distributive embedded bi-dcpos and distributive bi-dcpos.
Before putting the ingredients together, we define \emph{completely prime pairs} in a purified polarity to make explicit the composite of the two correspondences. 
Completely prime pairs will play the role of ``abstract points''.
The sensibleness of this definition is shown in \cref{l:same-cp}.

\begin{definition}[Completely prime pairs of purified polarities]
	Let $(\K, \O, \ko)$ be a purified polarity.
	A \emph{completely prime pair} of $(\K, \O, \ko)$ is a pair $(k,u) \in \K \times \O$ such that $k$ is the minimum of $\{l \in \K \mid l \not\ko u\}$ and $u$ is the maximum of $\{v \in \O \mid k \not\ko v\}$.
\end{definition}

The following shows that the notion of a completely prime pair in a purified polarity is the right one.

\begin{lemma}[Completely prime in polarity = completely prime in lattice] \label{l:same-cp}
	Let $(L, \K, \O)$ be a double base lattice.
	The set of completely prime pairs in the lattice $L$ coincides with the set of completely prime pairs of the polarity $(\K, \O, \leq)$.
\end{lemma}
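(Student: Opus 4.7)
The plan is to prove the two inclusions separately, in each case exploiting density of $\K$ and $\O$ in $L$.

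For the inclusion ``$\subseteq$'': suppose $(a,b) \in L \times L$ is completely prime in $L$. First I would show that $a \in \K$ and $b \in \O$. Since $a \nleq b$, by density of $\K$ there is $k \in \K$ with $k \leq a$ and $k \nleq b$. Because $\u a$ and $\d b$ partition $L$, from $k \nleq b$ we get $a \leq k$, hence $k = a$ and $a \in \K$; dually $b \in \O$. Then I would verify the polarity condition: any $l \in \K$ with $l \nleq b$ must lie in $\u a$, so $a \leq l$, showing $a$ is the minimum of $\{l \in \K \mid l \nleq b\}$; and symmetrically for $b$ being the maximum of $\{v \in \O \mid a \nleq v\}$.

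For the inclusion ``$\supseteq$'': suppose $(k,u) \in \K \times \O$ is a completely prime pair of the polarity $(\K,\O,\leq)$. I need to check that $\u k$ and $\d u$ partition $L$. Disjointness is immediate: if $a$ were in both, then $k \leq a \leq u$, contradicting $k \nleq u$ (which holds because $k$ is the minimum of $\{l \in \K \mid l \nleq u\}$ and so in particular belongs to this set). For the union, I would argue that every $a \in L$ falls in at least one of $\u k$, $\d u$: given $a \in L$ with $a \nleq u$, use join-density of $\K$ to write $a$ as a join of k-elements; some such k-element $l$ then satisfies $l \leq a$ and $l \nleq u$, whence the minimality of $k$ in $\{l \in \K \mid l \nleq u\}$ yields $k \leq l \leq a$, so $a \in \u k$.

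Both halves are routine once the correct use of density is identified; I do not anticipate any real obstacle. The only subtlety worth flagging is that the definition of a completely prime pair in a purified polarity uses \emph{minimum} and \emph{maximum} (not merely minimal/maximal), which is exactly what lets the above arguments go through without an appeal to Zorn or to bifoundedness, and this is also what ensures that the notion recorded here coincides, via the Fundamental Theorem of Formal Concept Analysis and the preceding lemma, with the notion obtained by restricting completely prime pairs of $L$ to $\K \times \O$.
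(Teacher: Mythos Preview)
Your proof is correct and follows essentially the same approach as the paper's: both directions use join-density of $\K$ (and meet-density of $\O$) in precisely the way you describe. The paper's version is more terse---for ($\subseteq$) it only states that $a \in \K$ and $b \in \O$ by density, leaving implicit that the polarity minimum/maximum conditions follow immediately from the lattice partition, and for ($\supseteq$) it phrases the goal as $k = \min\{a \in L \mid a \nleq u\}$ rather than as a partition statement---but the underlying argument is identical to yours.
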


\begin{proof}
	($\subseteq$). If $(a,b)$ is a completely prime pair in $L$, then $a \in \K$ and $b \in \O$, by density.
	
	($\supseteq$). Suppose that $(k,u)$ is a completely prime pair in $(\K, \O, \leq)$, and let us prove $k = \min \{a \in L \mid a \nleq u\}$.
	Let $a \in L$ be such that $a \nleq u$.
	By density, there is $l \in \K \cap \d a$ such that $l \nleq u$.
	Then $k \leq l \leq a$, and so $k \leq a$.
	Therefore, $k = \min \{a \in L \mid a \nleq u\}$. Dually, $u = \max\{a \in L \mid k \nleq a\}$.
\end{proof}

\begin{corollary}[Completely prime $\Leftrightarrow$ $\neswarrow$]
	Let $(\K, \O, \ko)$ be a purified polarity with a distributive concept lattice.
	A pair $(k,u) \in \K \times \O$ is completely prime if and only if $k \neswarrow u$.
\end{corollary}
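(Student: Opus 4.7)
The plan is to reduce this statement to the three preceding lemmas by passing through the double base lattice associated to the purified polarity via the Fundamental Theorem of Formal Concept Analysis.

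First, I would invoke Theorem \ref{t:fundam-thm-FCA} to pass from the purified polarity $(\K, \O, \ko)$ to the associated double base lattice $(L, \iota_\K[\K], \iota_\O[\O])$, where $L = \mathfrak{B}(\K, \O, \ko)$ and the basic functions $\iota_\K$, $\iota_\O$ are order-embeddings identifying $\K$ and $\O$ with join-dense and meet-dense subsets of $L$, respectively. By hypothesis, $L$ is distributive. Under these order-embeddings, the polarity $(\K, \O, \ko)$ is isomorphic to $(\iota_\K[\K], \iota_\O[\O], \leq)$, and both the notion of completely prime pair in the polarity sense and the relation $\neswarrow$ are preserved under polarity isomorphism, so it suffices to prove the statement for $(\iota_\K[\K], \iota_\O[\O], \leq)$ inside $L$.

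Next, I would chain the equivalences. A pair $(k, u) \in \iota_\K[\K] \times \iota_\O[\O]$ is completely prime as a pair in the polarity $(\iota_\K[\K], \iota_\O[\O], \leq)$ if and only if it is a completely prime pair in the complete lattice $L$: this is exactly the content of Lemma \ref{l:same-cp}. Then, since $L$ is distributive, the key Lemma \ref{l:weak-strong} yields that $(k, u)$ is completely prime in $L$ if and only if $k \neswarrow u$ in the lattice $L$. Finally, Lemma \ref{l:same} gives that $k \neswarrow u$ in $L$ is equivalent to $k \neswarrow u$ in the polarity $(\iota_\K[\K], \iota_\O[\O], \leq)$. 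Composing these three equivalences yields the desired biconditional.

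Since every step is a direct citation of a previously established lemma, I do not anticipate any genuine obstacle; the only thing to be careful about is making the transfer through the basic functions explicit, so that the notions of ``completely prime pair'' and of $\neswarrow$ in the original purified polarity are correctly identified with their counterparts in the double base lattice. Once this bookkeeping is done, the proof is essentially a one-line chain of three previously proved equivalences.
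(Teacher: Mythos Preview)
Your proposal is correct and follows exactly the paper's approach: the paper's proof is the single line ``By \cref{l:weak-strong,l:same,l:same-cp}'', which is precisely the chain of three equivalences you describe, passing through the associated double base lattice. Your extra care about transferring along the basic functions $\iota_\K$, $\iota_\O$ is fine bookkeeping but not strictly necessary, since Lemmas \ref{l:same} and \ref{l:same-cp} are already stated for arbitrary double base lattices and thus apply directly to $(\mathfrak{B}(\K,\O,\ko), \iota_\K[\K], \iota_\O[\O])$.
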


\begin{proof}
	By \cref{l:weak-strong,l:same,l:same-cp}.
\end{proof}

\begin{remark}[Point of a locale = completely prime pair] \label{r:point=cp-pair}
	Let $D$ be an open-filter-determined dcpo (such as a continuous domain or a spatial frame).
	The completely prime pairs of the bi-dcpo $(\ScottOpFilt(D), D, \ni)$ (see \cref{ex:open-filter-determined}) are precisely the pairs $(K, u) \in \ScottOpFilt(D) \times D$ such that $K = D \setminus \d u$; these are in bijection with the elements $u$ of $D$ such that $D \setminus \d u$ is a filter.
	
	If the dcpo $D$ has all finite meets, then $D \setminus \d u$ is a filter if and only if $u$ is meet-prime.
	If $D$ is a frame, by distributivity, an element is meet-prime if and only if it is meet-irreducible; therefore, $D \setminus \d u$ is a filter if and only if $u \neq \top$ and for every $v, w \in D$ if $v \wedge w = u$ then $v = u$ or $w = u$.
	In conclusion, for spatial frames, the usual notion of a point coincides with the notion of a completely prime pair of the bi-dcpo $(\ScottOpFilt(D), D, \ni)$ associated to it.
\end{remark}

We are now ready to state the main result of this section.

\begin{theorem}[Ko-spaces $\stackrel{1:1}{\leftrightarrow}$ distributive bi-dcpos $\stackrel{1:1}{\leftrightarrow}$ distributive embedded bi-dcpos]\label{t:bij-corr}
	There is a bijective correspondence between (isomorphism classes of)
	\begin{enumerate}
	
		\item
		ko-spaces,
		
		\item
		distributive bi-dcpos, and
		
		\item
		distributive embedded bi-dcpos.
		
	\end{enumerate}
\end{theorem}

In the following, we prove the theorem and make the bijective correspondences explicit.

\begin{description}
	
	\item[Ko-space $\rightsquigarrow$ distributive embedded bi-dcpo]
	To any ko-space $(X, \K, \O)$ we assign the distributive embedded bi-dcpo $(\Up(X),\K, \O)$.
	
	\item
	[Distributive embedded bi-dcpo $\rightsquigarrow$ ko-space]
	To any distributive embedded bi-dcpo $(L, \K, \O)$ we assign the ko-space $(X, \hat{\K}, \hat{\O})$, where $X$ is the set of completely prime pairs of $L$ ordered with the order induced by $L^\op \times L^\op$, and $\hat{\K}$ and $\hat{\O}$ are as follows.
	For each $a \in L$, we set
	\[
	\widehat{a} \coloneqq \{(k_0,u_0) \in X \mid k_0 \leq a\} = \{(k_0,u_0) \in X \mid a \not\leq u_0\}.
	\]
	Then, we set $\hat{\K} \coloneqq \{\hat{k} \mid k \in \K\}$ and $\hat{\O} \coloneqq \{\hat{u} \mid u \in \O\}$.	
\end{description}

The first assignment is well-defined by \cref{ex:d-proto-gives-embedded-bi-dcpo,c:spatial-iff-distributive}.
The second assignment is well-defined by the proof of \cref{c:spatial-iff-distributive} and the description of the assignments in \cref{t:char-Raney}.
Doing the first assignment and then the second one is the identity on ko-spaces by \cref{t:char-Raney}.
Doing the second assignment and then the first one is the identity on distributive embedded bi-dcpos by \cref{c:spatial-iff-distributive}.

\begin{description}
	
	\item[Distributive embedded bi-dcpo $\rightsquigarrow$ distributive bi-dcpo]
	To any distributive embedded bi-dcpo $(L, \K, \O)$ we assign the distributive bi-dcpo $(\K, \O, \leq)$.
	\item[Distributive bi-dcpo $\rightsquigarrow$ distributive embedded bi-dcpo]
	To any distributive bi-dcpo $(\K, \O, \ko)$ we assign the distributive embedded bi-dcpo $(\mathfrak{B}(\K, \O, \ko), \mathrm{im}(\iota_\K), \mathrm{im}(\iota_\O))$ where $\mathfrak{B}(\K, \O, \ko)$ is the concept lattice of $(\K, \O, \ko)$ and $\mathrm{im}(\iota_\K)$ and $\mathrm{im}(\iota_\O)$ are the images of the basic functions.
\end{description}
	
The fact that these two assignments are well-defined and mutually inverse is \cref{t:corr-bi-dcpos-embedded-bi-dcpos}.

\begin{description}

	\item[Ko-space $\rightsquigarrow$ distributive bi-dcpo]
	To any ko-space $(X, \K, \O)$ we assign the distributive bi-dcpo $(\K, \O, \subseteq)$.

	\item
	[Distributive bi-dcpo $\rightsquigarrow$ ko-space]
	To any distributive bi-dcpo $(\K, \O, \ko)$ we assign the ko-space $(X,\hat{\K}, \hat{\O})$ where $X$ is the set of completely prime pairs of $(\K, \O, \ko)$, equipped with the partial order
	\[
	(k_0,u_0)\leq(k_1,u_1)
	\quad\Longleftrightarrow\quad
	k_0\geq k_1
	\quad\Longleftrightarrow\quad
	u_0\geq u_1,
	\]
	and equipped with two subsets $\hat{\K}$ and $\hat{\O}$ as follows.
	$\hat{\K}$ is the set of elements of the following form for some $k \in \K$:
	\[
	\widehat{k} \coloneqq \{(k_0,u_0) \in X \mid k_0 \leq k\} = \{(k_0,u_0) \in X \mid k \not\ko u_0\}.
	\]
	$\hat{\O}$ is the set of elements of the following form for some $u \in \O$:
	\[
	\widehat{u} \coloneqq \{(k_0,u_0) \in X \mid k_0 \ko u\} = \{(k_0,u_0) \in X \mid u \not\leq u_0\}.
	\]
\end{description}

These assignments are obtained as the composites of the assignments above; \cref{l:same-cp} allows replacing completely prime pairs of the concept lattice of $(\K, \O, \ko)$ by completely prime pairs of the bi-dcpo $(\K, \O, \ko)$.

\begin{example}[Dcpos with Scott-open sets]
	Let $D$ be a dcpo.
	The bi-dcpo $(\ScottOp(D), D, \ni)$ is distributive, and the ko-space associated to it is the ko-space
	\[
	(D^\op ,\{C \subseteq D \mid C \text{ is Scott-closed}\},\{D \setminus \u u \mid u \in D\}),
	\]
	already encountered in \cref{ex:dcpo}.
	In this case, the bi-dcpo $(\ScottOp(D), D, \ni)$ has so many k-elements that its representation via a ko-space does not give any simplification in representing $D$.
\end{example}

We have started with bi-dcpos, which are very general.
But we have ended up obtaining certain completely distributive lattices, which are very special.
If this seems puzzling, the explanation is that the completely distributive lattice we obtain is the lattice of upsets of some poset, and these are always completely distributive.


\section{\texorpdfstring{On morphisms: ko-spaces $\stackrel{1:1}{\leftrightarrow}$ distributive (embedded) bi-dcpos}{On morphisms: ko-spaces <-> distributive (embedded) bi-dcpos}}

In this section, we turn the bijections of \cref{t:bij-corr} into (dual) equivalences.

\begin{notation}
	Given a relation $R \colon X \rel Y$ and subsets $A \subseteq X$ and $B \subseteq Y$, we set
	\[
		R[A] \coloneqq \{y \in Y \mid \exists x \in A : x \r{R} y\} \quad \text{ and } \quad R^{-1}[B] \coloneqq \{x \in X \mid \exists y \in B : x \r{R} y\}.
	\]
	We use the shorthands $R[x] \coloneqq R[\{x\}]$ and $R^{-1}[y] \coloneqq R^{-1}[\{y\}]$.
	Moreover, we set\footnote{We took the notation $R^{\leftarrow}$ from \cite[p.~14]{Erne2004}.
Alternative notations are $\forall R$ \cite{JungKegelmannEtAl2001} and $\Box_R$ (e.g.\ \cite{BezhanishviliGabelaiaEtAl2019}).}
	\[
	R^\leftarrow[B] \coloneqq \{ x \in X \mid R[x] \subseteq B\} = X \setminus R^{-1}[Y \setminus B]
	\]
\end{notation}

\begin{definition}[Weakening relation]
	A \emph{weakening relation}\footnote{The name ``weakening relation'' (see e.g.\ \cite{GalatosJipsen2020}) stems from the connection with the weakening rule in logic, which derives from the sequent $\Gamma \Rightarrow \Delta$ the sequents $\Gamma, A \Rightarrow \Delta$ and $\Gamma \Rightarrow \Delta, A$. Weakening relations have been used with different names in the literature (for example, they are called \emph{monotone relations} in \cite[Def.~2.1]{BilkovaKurzEtAl2011} and \emph{stable relations} in \cite[Def.~5]{StellSchmidtEtAl2016}), and are the specialization to posets of \emph{weighted relations} in the context of quantales (see e.g.\ \cite{KurzTzimoulis}). In the context of categories enriched over some symmetric closed monoidal category, weakening relations generalize to profunctors, sometimes called bimodules, distributors or relators.} from a poset $X$ to a poset $Y$ is a relation $R \colon X \rel Y$ such that, for all $x,x' \in X$ and $y,y' \in Y$, $x' \leq x \r{R} y \leq y'$ implies $x' \r{R} y'$.
\end{definition}


\subsection{Morphisms of ko-spaces}

\begin{definition}[C-relation] \label{d:c-relation}
	A \emph{c-relation} from a ko-space $(X, \K_X, \O_X)$ to a ko-space $(Y, \K_Y, \O_Y)$ is a weakening relation $R \colon X \rel Y$ such that $R[K] \in \K_Y$ for all $K \in \K_X$, and $R^\leftarrow[U] \in \O_X$ for all $U \in \O_Y$.
\end{definition}

The name ``c-relation'' should suggest ``closed relation'', as it restricts to the standard notion of closed relation between compact Hausdorff spaces; more on this is in \cref{l:c-relations=closed-relations}.

\begin{definition}[$\KO$]
	We let $\KO$ denote the category of ko-spaces and c-relations between them. 
	The composition of c-relations is the usual composition of relations.
	The identity morphism is the specialization order.
\end{definition}

To give a piece of evidence that c-relations are a meaningful notion, we show that they preserve the relevant structure; this is a version of Esakia's lemma (see e.g.\ \cite[Lem.~3.3.12]{Esakia2019}).

\begin{lemma}[A version of Esakia's lemma] \label{l:Esakia}
	Let $R \colon X \rel Y$ be a c-relation.
	\begin{enumerate}
		
		\item \label{i:fact-preserves-codirected} 
		For every codirected subset $\F$ of $\K_X$ we have $R[\bigcap \F] = \bigcap \{ R[K] \mid K \in \F \}$.
		
		\item \label{i:fact-preserves-directed}
		For every directed subset $\I$ of $\O_Y$ we have $R^\leftarrow[\bigcup \I] = \bigcup \{R^{\leftarrow}[U] \mid U \in \I\}$.
	
	\end{enumerate}
\end{lemma}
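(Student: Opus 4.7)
The lemma is a two-part statement; in each part one inclusion is immediate from monotonicity, and the other is the substantive one, witnessing that c-relations are well-behaved with respect to the key closure operations $\K$ (codirected intersections) and $\O$ (directed unions). The plan is to treat the two parts in parallel, exploiting the fact that a weakening relation $R$ is ``downward in $X$ and upward in $Y$''.

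For part (1), the easy inclusion $R[\bigcap\F]\subseteq\bigcap_{K\in\F}R[K]$ is just the observation that $\bigcap\F\subseteq K$ for every $K\in\F$. For the reverse inclusion, I would argue by contradiction: suppose $y\in\bigcap_{K\in\F}R[K]$ but $y\notin R[\bigcap\F]$. This says $\bigcap\F\cap R^{-1}[y]=\varnothing$, i.e.\ $\bigcap\F\subseteq X\setminus R^{-1}[y]$. The crucial step is to recognize that the set $X\setminus R^{-1}[y]$ lies in $\O_X$. Indeed, since $R$ is a weakening relation, $R^{-1}[y]$ is a downset and in fact $R^{-1}[y]=R^{-1}[\d y]$, so $X\setminus R^{-1}[y]=R^{\leftarrow}[Y\setminus\d y]$; now $Y\setminus\d y\in\O_Y$ by \eqref{i:order-ps-dxc-is-o}, and $R^{\leftarrow}$ sends o-sets to o-sets by \cref{d:c-relation}. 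Well-filteredness \eqref{i:cocompactness-ps} applied to the codirected $\F\subseteq\K_X$ and the o-set $R^{\leftarrow}[Y\setminus\d y]$ then yields some $K\in\F$ with $K\subseteq R^{\leftarrow}[Y\setminus\d y]$, i.e.\ $K\cap R^{-1}[y]=\varnothing$, contradicting $y\in R[K]$.

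For part (2), the inclusion $\supseteq$ is immediate since $U\subseteq\bigcup\I$ for $U\in\I$. For $\subseteq$, fix $x\in R^{\leftarrow}[\bigcup\I]$, so $R[x]\subseteq\bigcup\I$. The key trick here is to exploit the weakening property in the other direction: if $z\geq x$ and $z\r R y$, then $x\r R y$, so $R[z]\subseteq R[x]$; combined with $R[x]\subseteq R[\u x]$, this gives $R[\u x]=R[x]$. Since $\u x\in\K_X$ by \eqref{i:order-ps-ux-is-k}, and $R$ carries k-sets to k-sets, we deduce $R[x]\in\K_Y$. Its compactness \eqref{i:compactness-ps} applied to the directed cover $\I$ produces $U\in\I$ with $R[x]\subseteq U$, i.e.\ $x\in R^{\leftarrow}[U]$.

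The main obstacle is the nontrivial direction of (1): the contrapositive reformulation requires rewriting ``$y\notin R[\bigcap\F]$'' as an o-set inclusion containing $\bigcap\F$, which is precisely where the weakening property of $R$ (making $R^{-1}[y]$ a downset equal to $R^{-1}[\d y]$) interacts with the axiom that complements of principal downsets are o-sets. Once that o-set is identified, well-filteredness does the rest. Part (2) is the perfect formal dual, with the role of well-filteredness taken by compactness, and $R[\u x]=R[x]$ playing the role dual to $R^{-1}[y]=R^{-1}[\d y]$.
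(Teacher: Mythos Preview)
Your proof is correct. Both arguments hinge on the same two ingredients---pulling back (or pushing forward) along $R$ to land in $\O_X$ (resp.\ $\K_Y$), and then applying cocompactness (resp.\ compactness)---but the presentation differs in a small, instructive way. For part~(1), the paper argues that $R[\bigcap\F]$ is an upset of $Y$ and hence the intersection of all o-sets above it, so it suffices to show that any o-set $U\supseteq R[\bigcap\F]$ also contains $\bigcap_{K\in\F}R[K]$; one then passes to $R^\leftarrow[U]\in\O_X$ and applies cocompactness. Your argument is the pointwise version: instead of testing against an arbitrary o-set $U$, you test against the generating o-sets $Y\setminus\d y$, which amounts to a direct element-level contradiction. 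Your route is slightly more elementary (it does not invoke the fact that every upset is an intersection of o-sets), while the paper's route makes the ``equal iff contained in the same o-sets'' structure more visible. For part~(2) the paper simply says ``dual''; your explicit argument via $R[\u x]=R[x]\in\K_Y$ is exactly what that dual spells out.
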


\begin{proof}
	\eqref{i:fact-preserves-codirected}.
	$(\subseteq)$. This inclusion is immediate.
	$(\supseteq)$.
	Since $R$ is a weakening relation, $R[\bigcap \F]$ is an upset, and so it is the intersection of all o-sets containing it.
	So, it is enough to show that every o-set of $Y$ containing $R[\bigcap \F]$ contains $\bigcap \{ R[K] \mid K \in \F \}$.
	Let $U$ be an o-set of $Y$ such that $R[\bigcap \F] \subseteq U$.
	Then $\bigcap \F \subseteq R^\leftarrow[U]$.
	Therefore, since $R^\leftarrow[U]$ is an o-set, by cocompactness of o-sets there is $L \in \F$ such that $L \subseteq R^\leftarrow[U]$, and hence $R[L] \subseteq U$.
	Therefore, $\bigcap \{ R[K] \mid K \in \F \} \subseteq R[L] \subseteq U$, as desired.
	
	\eqref{i:fact-preserves-directed}.
	This is the dual of \eqref{i:fact-preserves-codirected}.
\end{proof}

\begin{remark}[De Groot self-duality on ko-spaces]
	If $R \colon X \rel Y$ is a c-relation between ko-spaces, then the converse relation $R^\partial$ is a c-relation from the de Groot dual $Y^\partial$ of $Y$ to the de Groot dual $X^\partial$ of $X$.
	Therefore, the category $\KO$ is self-dual.
\end{remark}

Given compact Hausdorff spaces $X$ and $Y$, the c-relations from $X$ to $Y$ are precisely the closed subsets of $X \times Y$.
This follows from \cref{l:c-relations=closed-relations} below, which is in the more general setting of stably compact spaces.
Following \cite[p.~213]{JungKegelmannEtAl2001}, a \emph{closed relation} from a stably compact space $X$ to a stably compact space $Y$ is a closed subset of the product of $X$ and the de Groot dual $Y^\partial$ of $Y$. 
Spelling out this condition, a closed relation from $X$ to $Y$ is a relation $R \colon X \rel Y$ such that for all $x \in X$ and $y \in Y$ with $(x,y) \notin R$ there are an open neighborhood $U$ of $x$ and a compact saturated set $K \subseteq Y$ with $y \notin K$ such that $(U \times (Y \setminus K)) \cap R = \varnothing$.\footnote{Equivalently, it is a weakening relation that is a closed subset of the product of $X$ and $Y$ equipped with the patch topologies.}

\begin{lemma}[Closed relation $=$ c-relation] \label{l:c-relations=closed-relations}
	The closed relations (in the sense of \cite{JungKegelmannEtAl2001}) from a stably compact space $X$ to a stably compact space $Y$ are exactly the c-relations from the ko-space $(X, \KSat(X), \Op(X))$ to the ko-space $(Y, \KSat(Y), \Op(Y))$.
\end{lemma}

\begin{proof}
	($\subseteq$).
	Suppose that $R$ is a closed subset of $X \times Y^\partial$.
	It is clear that $R$ is a weakening relation.
	By \cite[Prop.~3.3]{JungKegelmannEtAl2001}, $R^\leftarrow[-]$ maps open sets to open sets.
	Dually, $R[-]$ maps compact saturated sets to compact saturated sets.
	Therefore, $R$ is a c-relation.
	
	($\supseteq$).
	Suppose that $R \colon X \rel Y$ is a c-relation, and let us prove that it is a closed relation.
	Let $(x,y) \in (X \times Y) \setminus R$.
	Since $R$ is a weakening relation and $(x,y) \notin R$, we have $R[\u x] \subseteq Y \setminus \d y$.
	Since $Y$ is locally compact and $R[\u x]$ is compact saturated and $Y \setminus \d y$ open, there are an open set $U \subseteq Y$ and a compact saturated subset $K \subseteq Y$ such that $R[\u x] \subseteq U \subseteq K \subseteq Y \setminus \d y$.
	From the three inequalities we get $x \in R^\leftarrow[U]$, $(R^\leftarrow[U] \times (Y \setminus K)) \cap R = \varnothing$, and $y \notin K$, respectively.
	Thus, $R$ is closed.
\end{proof}

Therefore, the category of stably compact spaces and closed relations is a full subcategory of the category $\KO$ of ko-spaces and c-relations between them.
Note that the self-duality of $\KO$ restricts to de Groot self-duality for stably compact spaces.

To give a further example of a c-relation, we recall that the \emph{hypergraph} of an order-preserving function $f \colon X \to Y$ between posets is the relation $R_f \colon X \rel Y$ defined by $x \mathrel{R_f} y$ if and only if $f(x) \leq y$.

\begin{example}[Hypergraph is c-relation]
	The hypergraph of a continuous function $f \colon X \to Y$ between well-filtered $T_0$ spaces is a c-relation from $(X, \KSat(X), \Op(X))$ to $(Y, \KSat(Y), \Op(Y))$.
\end{example}

\begin{proof}
	It is easily seen that the hypergraph $R_f$ of $f$ is a weakening relation since $f$ is order-preserving.
	For every open set $U$ of $Y$, the set $R_f^\leftarrow[U]$ is $f^{-1}[U]$, which is open.
	For every compact saturated set $K \subseteq X$, the set $R_f[K]$ is compact and saturated since it is the saturation $\u f[K]$ of the compact set $f[K]$.
\end{proof}

\begin{remark}[Isomorphism = structure-preserving bijection] \label{r:iso-are-bijections}
	By a \emph{structure-preserving bijection} between two ko-spaces $X$ and $Y$ we mean an order-isomorphism $f \colon X \to Y$ such that, for every $A \subseteq X$, (i) $A$ is a k-set if and only if $f[A]$ is a k-set, and (ii) $A$ is an o-set if and only if $f[A]$ is an o-set.
	Every structure-preserving bijection $f \colon X \to Y$ between ko-spaces gives rise to an isomorphism in $\KO$, witnessed by the relations $R \colon X \rel Y$ and $S \colon Y \rel X$ defined by $x \r{R} y$ if and only if $f(x) \leq y$ and $y \r{S} x$ if and only if $f(x) \geq y$.
	Moreover, every isomorphism in $\KO$ arises in this way; this fact relies on the well-known analogous fact holding in the category of posets and weakening relations (see, e.g., \cite[Sec.~5.2.2]{KurzMoshierEtAl2023}).
\end{remark}


\subsection{Morphisms of (embedded) bi-dcpos}

\subsubsection{On morphisms: purified polarities $\stackrel{1:1}{\leftrightarrow}$ double base lattices}

\begin{definition}[Galois morphism {\cite[p.~129]{Erne2004a}}] \label{d:Galois-morphism}
	A \emph{Galois morphism} from a purified polarity $(\K_1, \O_1, \ko_1)$ to a purified polarity $(\K_2, \O_2, \ko_2)$ is a pair of functions $(\blacklozenge \colon \K_1 \to \K_2, \Box \colon \O_2 \to \O_1)$ such that, for all $k \in \K_1$ and $u \in \O_2$, $\blacklozenge(k) \ko_2 u$ if and only if $k \ko_1 \Box(u)$. 
\end{definition}

Galois morphisms generalize Galois connections (i.e., adjunctions) between posets, and, if we see polarities as 2-valued Chu spaces, coincide with \emph{Chu transforms} \cite{Pratt1995}.
As in the case of Galois connections between posets, one component of a Galois morphism $(\blacklozenge, \Box)$ between purified polarities determines the other, and hence the whole morphism.
Thus, one may regard $\blacklozenge$ (or $\Box$) alone as a morphism between the given polarities; this is called a \emph{Galois map} in {\cite[p.~129]{Erne2004a}}.

\begin{definition}[Embedded Galois morphism] \label{l:double-base-morphism}
	An \emph{embedded Galois morphism} (called \emph{double base lattice morphism} in \cite{Erne2004a}) from a double base lattice $(L_1, \K_1, \O_1)$ to a double base lattice $(L_2, \K_2, \O_2)$ is a pair of maps $(\blacklozenge \colon L_1 \to L_2, \Box \colon L_2 \to L_1)$ such that
	\begin{enumerate}
	
		\item
		$\blacklozenge$ is left adjoint to $\Box$, i.e.: for all $x \in L_1$ and $y \in L_2$, $\blacklozenge(x) \leq y$ if and only if $x \leq \Box(y)$;
		
		\item
		$\blacklozenge$ maps elements of $\K_1$ to elements of $\K_2$ and
		$\Box$ maps elements of $\O_2$ to elements of $\O_1$.
	
	\end{enumerate}
\end{definition}

The Fundamental Theorem of Formal Concept Analysis can be strengthened to an equivalence of categories:

\begin{theorem}[Categorical equivalence between purified polarities and double base lattices {\cite{Erne2004a}}] \label{t:equiv-formal-concept-analysis}
	The category of purified polarities and Galois morphisms between them is equivalent to the category of double base lattices and embedded Galois morphisms between them.
\end{theorem}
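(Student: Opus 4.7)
The plan is to promote the object-level bijection of \cref{t:fundam-thm-FCA} to a categorical equivalence by defining functors in both directions and then recognising the object-level isomorphisms displayed after that theorem as natural. Let $F$ send a purified polarity $(\K, \O, \ko)$ to the double base lattice $(\B(\K, \O, \ko), \iota_\K[\K], \iota_\O[\O])$, and let $G$ send a double base lattice $(L, \K, \O)$ to the purified polarity $(\K, \O, \leq)$ obtained by restricting the order.

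The definition of $G$ on morphisms is immediate: an embedded Galois morphism $(\blacklozenge \colon L_1 \to L_2, \Box \colon L_2 \to L_1)$ restricts to $\blacklozenge|_{\K_1} \colon \K_1 \to \K_2$ and $\Box|_{\O_2} \colon \O_2 \to \O_1$, and the lattice-level adjointness restricts to the polarity-level one. For $F$ on morphisms, I start from a Galois morphism $(\blacklozenge_0, \Box_0)$ and first observe that both components are order-preserving---this follows from the adjointness condition together with the definition of the specialization orders via $\uo{(-)}$ and $\dk{(-)}$. I then extend each component by density:
\[
\widetilde{\blacklozenge}(a) := \bigvee \{\iota_{\K_2}(\blacklozenge_0(k)) \mid k \in \K_1,\ \iota_{\K_1}(k) \leq a\}, \quad \widetilde{\Box}(b) := \bigwedge \{\iota_{\O_1}(\Box_0(u)) \mid u \in \O_2,\ b \leq \iota_{\O_2}(u)\}.
\]
By construction $\widetilde{\blacklozenge}$ preserves joins and $\widetilde{\Box}$ preserves meets, each restricts to the original map on the dense subset, and $\widetilde{\blacklozenge}$ sends $\iota_{\K_1}[\K_1]$ into $\iota_{\K_2}[\K_2]$ with the dual statement for $\widetilde{\Box}$.

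The main obstacle will be verifying the adjointness $\widetilde{\blacklozenge}(a) \leq b \iff a \leq \widetilde{\Box}(b)$. I expect to reduce it by join-density of $\iota_{\K_1}[\K_1]$ in $\B(\K_1,\O_1,\ko_1)$ and meet-density of $\iota_{\O_2}[\O_2]$ in $\B(\K_2,\O_2,\ko_2)$ to the case $a = \iota_{\K_1}(k)$ and $b = \iota_{\O_2}(u)$, where the equivalence becomes $\iota_{\K_2}(\blacklozenge_0(k)) \leq \iota_{\O_2}(u) \iff \iota_{\K_1}(k) \leq \iota_{\O_1}(\Box_0(u))$, and this is precisely the Galois-morphism condition $\blacklozenge_0(k) \ko_2 u \iff k \ko_1 \Box_0(u)$ translated through the basic functions. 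Functoriality of $F$ is then a consequence of the fact that a join-preserving map is determined by its restriction to a join-dense set (so the extension of a composite agrees with the composite of the extensions), and dually for $\widetilde{\Box}$; identities clearly extend to identities.

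To complete the equivalence, the unit $\eta_{(\K, \O, \ko)}$ is the Galois morphism with forward component $\iota_\K$ and backward component $\iota_\O^{-1}$---an isomorphism of polarities by the object part of \cref{t:fundam-thm-FCA}---and naturality at a Galois morphism $(\blacklozenge_0, \Box_0)$ says that restricting its extension recovers the original pair, which holds by construction. The counit at $(L, \K, \O)$ is the embedded Galois morphism carried by the lattice isomorphism $\B(\K, \O, \leq) \cong L$ sending $(A,B)$ to $\bigvee A = \bigwedge B$; its naturality amounts to the statement that an embedded Galois morphism, being a pair of adjoints between complete lattices, is uniquely determined by (and agrees with) the density-extension of its restriction to $\K$ and $\O$---which is exactly how $F$ was defined on morphisms.
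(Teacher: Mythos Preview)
Your proof is correct and follows the same construction the paper describes: the paper cites this result from \cite{Erne2004a} and only records the two functors (restriction on one side, density-extension $\bar{\blacklozenge}(a) = \bigvee_{k \in \dk a} \blacklozenge(k)$ and $\bar{\Box}(b) = \bigwedge_{u \in \uo b} \Box(u)$ on the other), which are exactly your $G$ and $F$. Your reduction of the adjointness $\widetilde{\blacklozenge}(a) \leq b \iff a \leq \widetilde{\Box}(b)$ to the dense generators via join-density of $\iota_{\K_1}[\K_1]$ and meet-density of $\iota_{\O_2}[\O_2]$ is the right argument; one minor remark is that the claim ``by construction $\widetilde{\blacklozenge}$ preserves joins'' is cleanest to justify \emph{after} establishing the adjunction rather than before, but this does not affect the correctness of the overall argument.
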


The assignment on objects is as in \cref{t:fundam-thm-FCA}.
The bijection on homsets between two double base lattices and their associated purified polarities is as follows.
Let $(L_1, \K_1, \O_1)$ and $(L_2, \K_2, \O_2)$ be double base lattices.
\begin{description}
	\item[From double base lattices to purified polarities] (See the functor $\K_0$ in \cite[p.~130]{Erne2004a}.)
	To a morphism 
	\[
	(\blacklozenge \colon L_1 \to L_2, \Box \colon L_2 \to L_1)
	\]
	of double base lattices from $(L_1, \K_1, \O_1)$ to $(L_2, \K_2, \O_2)$ we associate the pair of restrictions
	\[
		(\restr{\blacklozenge}{\K_1,\K_2} \colon \K_1 \to \K_2, \restr{\Box}{\O_2, \O_1} \colon \O_2 \to \O_1),
	\]
	which is a morphism of purified polarities.
	
	\item [From purified polarities to double base lattices]
	(See the \emph{base concept lattice functor $\mathcal{L}_0$} in \cite[p.~131]{Erne2004a}.)
	To a morphism of polarities
	\[
	(\blacklozenge \colon \K_1 \to \K_2, \Box \colon \O_2 \to \O_1)
	\]
	from $(\K_1, \O_1, \leq)$ to $(\K_2, \O_2, \leq)$
	we associate the pair of functions
	\[
	(\bar{\blacklozenge} \colon L_1 \to L_2, \bar{\Box} \colon L_2 \to L_1)
	\]
	given by, for $a \in L_1$ and $b \in L_2$,
	\[
	\bar{\blacklozenge}(a) = \bigvee_{k \in {{\downarrow_{\K_1}} a}} \blacklozenge(k) \quad \text{ and } \quad \bar{\Box}(b) = \bigwedge_{u \in {{\uparrow_{\O_2}} b}} \Box(u),
	\]
	which is a morphism of double base lattices from $(L_1, \K_1, \O_1)$ to $(L_2, \K_2, \O_2)$.
\end{description}

\subsubsection{Morphisms of bi-dcpos}

\begin{definition}[$\Bidcpo$, $\DBidcpo$]
	We let $\Bidcpo$ denote the category of bi-dcpos and Galois morphisms (see \cref{d:Galois-morphism}) between them, and $\DBidcpo$ its full subcategory on distributive bi-dcpos.
\end{definition}

We suggest thinking of a Galois morphism $(\blacklozenge, \Box)$ as the pointfree analogue of a c-relation $R$:
\begin{enumerate}

	\item
	$\blacklozenge$ is the \emph{forward image}, which maps $K$ to $R[K]$: the forward image of a compact set is compact;
	
	\item
	$\Box$ is the so-called \emph{universal preimage}, which maps $U$ to $R^\leftarrow[U] = (R^{-1}[U^c])^c$: the universal preimage of an open set is open.
	
\end{enumerate}

Our usage of the symbols $\blacklozenge$ and $\Box$ in the definition of Galois morphisms is suggested by the literature on modal logic and on the Vietoris hyperspaces and powerlocales.\footnote{First, this connects to Kripke semantics for modal logic and in its duality-theoretic version called ``J\'onsson-Tarski duality for modal algebras'' \cite{JonssonTarski1951,Esakia1974,Goldblatt1976,SambinVaccaro1988}. In these semantics, $\Box$ is interpreted as the universal preimage, as we do here. While the white diamond $\Diamond$ is usually interpreted as the preimage, in temporal logic there is also the black diamond $\blacklozenge$, which is interpreted as the forward image and is adjoint to the universal preimage $\Box$, as we do here. A standard reference for temporal logic is \cite{Goldblatt1987}.
Second, in the Vietoris powerlocale (\cite{Johnstone1985}, and also \cite[Sec.~II.4]{Johnstone1982}), $\Box$ preserves all finite meets and directed joins, and $\Diamond$ preserves all joins.
Something similar happens in Galois morphisms between bi-dcpos.
In particular, $\Box$ preserves all directed joins (as we prove in \cref{l:morphism-preserves-dir-codir}); moreover, since $\Box$ extends to a right adjoint function between the associated embedded bi-dcpos, $\Box$ preserves all the meets that are ``truly meets'', meaning those that are meets also in the embedded bi-dcpo. In particular, by \cref{l:preserve}, the function $\Box$ in a Galois morphism $(\blacklozenge \colon \K_1 \to \K_2, \Box \colon \O_2 \to \O_1)$ preserves all existing finite meets if the set of o-elements above any k-element in $\K_2$ is codirected, i.e.\ if each element of $\K_2$ can be identified with a (necessarily Scott-open) filter of $\O_2$. Dually, $\blacklozenge$ preserves codirected meets and all existing joins that are preserved by the inclusion of $\K_1$ in the concept lattice of $(\K_1, \O_1, \ko_1)$.}

We prove that the morphisms preserve the whole relevant structure. The proof is essentially the same as that of \cref{l:Esakia}.

\begin{lemma} [A version of Esakia's lemma, pointfree]\label{l:morphism-preserves-dir-codir}
	Let $(\blacklozenge \colon \K_1 \to \K_2, \Box \colon \O_2 \to \O_1)$ be a Galois morphism from a bi-dcpo $(\K_1, \O_1, \ko_1)$ to a bi-dcpo $(\K_2, \O_2, \ko_2)$. The functions $\blacklozenge$ and $\Box$ preserve codirected meets and directed joins, respectively.
\end{lemma}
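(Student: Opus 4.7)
The plan is to prove the statement for $\Box$ and invoke the symmetry of bi-dcpos (swapping $(\K, \O, \ko)$ with $(\O, \K, \ko^\partial)$) to deduce the statement for $\blacklozenge$. So fix a directed subset $I \subseteq \O_2$. First I would check that both $\blacklozenge$ and $\Box$ are order-preserving: this follows from the Galois adjunction together with the weakening property of $\ko_i$ (if $k \leq k'$ and $\blacklozenge(k') \ko_2 u$, then $k' \ko_1 \Box(u)$, hence $k \ko_1 \Box(u)$ by weakening, hence $\blacklozenge(k) \ko_2 u$, so $\blacklozenge(k) \leq \blacklozenge(k')$). Consequently $\{\Box(u) \mid u \in I\}$ is directed in $\O_1$, so $\bigvee_{u \in I} \Box(u)$ exists, and order-preservation gives the easy inequality $\bigvee_{u \in I} \Box(u) \leq \Box(\bigvee I)$.

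For the reverse inequality, the plan is to use extensionality of the bi-dcpo $(\K_1, \O_1, \ko_1)$ (namely $\dk a \subseteq \dk b \Rightarrow a \leq b$), reducing the task to showing that every $k \in \K_1$ with $k \ko_1 \Box(\bigvee I)$ satisfies $k \ko_1 \bigvee_{u \in I} \Box(u)$. So fix such a $k$. The Galois adjunction translates this to $\blacklozenge(k) \ko_2 \bigvee I$. Now I would invoke compactness of k-elements in $(\K_2, \O_2, \ko_2)$ applied to the directed set $I$: this yields some $u_0 \in I$ with $\blacklozenge(k) \ko_2 u_0$. Translating back through the adjunction gives $k \ko_1 \Box(u_0)$, and since $\Box(u_0) \leq \bigvee_{u \in I} \Box(u)$, weakening delivers $k \ko_1 \bigvee_{u \in I} \Box(u)$, as required.

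The argument for $\blacklozenge$ preserving codirected meets is obtained by running exactly the dual argument, using cocompactness of o-elements in $(\K_1, \O_1, \ko_1)$ in place of compactness of k-elements, and extensionality in $(\K_2, \O_2, \ko_2)$. There is no genuine obstacle here: the only nontrivial ingredient is the double compactness axiom, which is precisely the pointfree analogue of the ``Esakia lemma''-style fact used in \cref{l:Esakia}.
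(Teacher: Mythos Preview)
Your proof is correct and follows essentially the same approach as the paper's: use extensionality (purifiedness) to reduce the hard inequality to a statement about k-elements, pass through the Galois adjunction, invoke double compactness, and pass back. The only difference is that you prove the claim for $\Box$ first while the paper proves it for $\blacklozenge$ first, but the two arguments are perfect duals of each other; you also make the order-preservation of $\blacklozenge$ and $\Box$ explicit, which the paper takes for granted when calling the easy inequality ``immediate''.
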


\begin{proof}
	We prove that $\blacklozenge$ preserves codirected meets.
	Let $\F$ be a codirected subset of $\K_1$.
	We prove $\blacklozenge(\bigwedge \F) = \bigwedge_{k \in \F} \blacklozenge(k)$.
	The inequality $\blacklozenge(\bigwedge \F) \leq \bigwedge_{k \in \F} \blacklozenge(k)$ is immediate.
	Let us prove the converse inequality, i.e.\ $\bigwedge_{k \in \F} \blacklozenge(k) \leq \blacklozenge(\bigwedge \F)$.
	By definition of the order on $\K_2$, we shall prove that, for every $u \in \O_2$ with $\blacklozenge(\bigwedge \F) \ko_2 u$, we have $\bigwedge_{k \in \F} \blacklozenge(k) \ko_2 u$.
	Let $u \in \O_2$ be such that $\blacklozenge(\bigwedge \F) \ko_2 u$.
	Then, by adjointness, $\bigwedge \F \ko_1 \Box(u)$.
	Then, by double compactness, there is $l \in \F$ such that $l \ko_1 \Box(u)$.
	Then, by adjointness, $\blacklozenge(l) \ko_2 u$.
	Then, $\bigwedge_{k \in \F} \blacklozenge(k) \leq \blacklozenge(l) \ko_2 u$, and so $\bigwedge_{k \in \F} \blacklozenge(k) \ko_2 u$, as desired.
	
	Dually, $\Box$ preserves directed joins.
\end{proof}

\begin{remark}[Lawson self-duality on bi-dcpos]
	If $(\blacklozenge \colon \K_1 \to \K_2, \Box \colon \O_2 \to \O_1)$ is a Galois morphism from a bi-dcpo $(\K_1, \O_1, \ko_1)$ to a bi-dcpo $(\K_2, \O_2, \ko_2)$, then $(\Box \colon \O_2 \to \O_1, \blacklozenge \colon \K_1 \to \K_2)$ is a Galois morphism from $(\O_2, \K_2, \vartriangleright_2)$ to $(\O_1, \K_1, \vartriangleright_1)$.
	Therefore, the category $\Bidcpo$ is self-dual.
\end{remark}

\begin{remark} \label{r:identify-morphisms}
	Recall from \cref{r:identify} that a bi-dcpo is the same thing as a dcpo $\O$ equipped with an order-separating set $\K$ of Scott-open subsets that is closed under directed unions.
	Then, a Galois morphism from $(\K_1, \O_1, \ko_1)$ to $(\K_2, \O_2, \ko_2)$ is precisely a function $\Box \colon \O_2 \to \O_1$ such that the preimage under $\Box$ of an element of $\K_1$ belongs to $\K_2$.
\end{remark}

\cref{r:identify} has the following consequence.

\begin{lemma}[Scott-continuous $\stackrel{1:1}{\leftrightarrow}$ Galois morphisms] \label{l:scott-continuous}
		The set of Scott-continuous functions from a dcpo $D_2$ to a dcpo $D_1$ is in bijection with the set of Galois morphisms from the bi-dcpo $(\ScottOp(D_1), D_1, \ni)$ to the bi-dcpo $(\ScottOp(D_2), D_2, \ni)$.
\end{lemma}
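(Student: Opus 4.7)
The proof reduces to two observations, both essentially bookkeeping.

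First, by Remark~\ref{r:identify-morphisms}, for any bi-dcpo of the form $(\ScottOp(D), D, \ni)$ an element $U \in \K = \ScottOp(D)$ is identified (under Remark~\ref{r:identify}) with the Scott-open subset $U$ of $\O = D$ itself. A Galois morphism from $(\ScottOp(D_1), D_1, \ni)$ to $(\ScottOp(D_2), D_2, \ni)$ is thus determined by a single function $\Box \colon D_2 \to D_1$ with the property that $\Box^{-1}(U) \in \ScottOp(D_2)$ for every $U \in \ScottOp(D_1)$; the companion map $\blacklozenge$ is then forced by the adjointness condition $y \in \blacklozenge(U) \iff \Box(y) \in U$ to be $U \mapsto \Box^{-1}(U)$.

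Second, the condition ``$\Box^{-1}$ sends Scott-opens to Scott-opens'' is, by definition, topological continuity with respect to the Scott topologies, which is a well-known equivalent of order-theoretic Scott-continuity, i.e.\ preservation of directed joins (see e.g.\ \cite[Prop.~II-2.1]{GierzHofmannEtAl2003}).

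Putting these two together, I would define the bijection explicitly as follows. Given a Scott-continuous $f \colon D_2 \to D_1$, set $\Box \coloneqq f$ and $\blacklozenge(U) \coloneqq f^{-1}(U)$ for $U \in \ScottOp(D_1)$; Scott-continuity of $f$ guarantees that $\blacklozenge$ takes values in $\ScottOp(D_2)$, and the adjointness equivalence holds by construction. Conversely, from a Galois morphism $(\blacklozenge, \Box)$ extract the function $\Box \colon D_2 \to D_1$; it is Scott-continuous because $\Box^{-1}(U) = \blacklozenge(U) \in \ScottOp(D_2)$ for every $U \in \ScottOp(D_1)$. That the two assignments are mutually inverse is immediate, since in each direction the only piece of data is the function $\Box = f$ itself. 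No genuine obstacle is expected: the lemma amounts to unpacking Remarks~\ref{r:identify} and~\ref{r:identify-morphisms} and citing the textbook equivalence between the topological and order-theoretic meanings of ``Scott-continuous''.
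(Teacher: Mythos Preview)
Your proposal is correct and follows essentially the same route as the paper: the paper derives the lemma directly from Remark~\ref{r:identify-morphisms} and then spells out the mutually inverse assignments $f \mapsto (f^{-1}[-], f)$ and $(\blacklozenge, \Box) \mapsto \Box$, which is exactly what you do. Your added citation of \cite[Prop.~II-2.1]{GierzHofmannEtAl2003} for the equivalence between topological and order-theoretic Scott-continuity makes explicit the one step the paper leaves implicit.
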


The assignments witnessing the bijection in \cref{l:scott-continuous} are:
\begin{itemize}

	\item
	to a Scott-continuous function $f \colon D_2 \to D_1$ we associate the Galois morphism
	\[
	(f^{-1}[-]\colon \ScottOp(D_1) \to \ScottOp(D_2), f \colon D_2 \to D_1);
	\]
	
	\item
	to a Galois morphism $(\blacklozenge \colon \ScottOp(D_1) \to \ScottOp(D_2), \Box \colon D_2 \to D_1)$ we associate the function $\Box$.
\end{itemize}

Therefore, the category of dcpos and Scott-continuous functions is a full subcategory of the category $\Bidcpo$ of bi-dcpos and Galois morphisms.

An \emph{open filter morphism} \cite[Def.~IV-2.2, p.~281]{GierzHofmannEtAl2003} between open-filter-determined dcpos is a function $f \colon D_1 \to D_2$ such that the preimage under $f$ of a Scott-open filter is a Scott-open filter.
Every open filter morphism between open-filter-determined dcpos is Scott-continuous.\footnote{The proof is essentially the same as the ones of \cref{l:Esakia,l:morphism-preserves-dir-codir}. First of all, we prove that $f$ is order-preserving. Let $u, v \in D_1$ with $u \leq v$, and let us prove $f(u) \leq f(v)$. Since $D_2$ is open-filter-determined, it is enough to prove that, for every Scott-open filter $F$ with $f(u) \in F$, we have $f(v) \in F$. Let $F$ be a Scott-open filter with $f(u) \in F$. Then $u \in f^{-1}[F]$. Since $f^{-1}[F]$ is a Scott-open filter, it is an upset, and hence $v \in f^{-1}[F]$, which implies $f(v) \in F$, as desired. This proves that $f$ is order-preserving. Let $I$ be a directed subset of $D_1$, and let us prove that $\bigvee f[I] = f(\bigvee I)$. The inequality $\leq$ is clear since $f$ is order-preserving. Let us prove the converse inequality, i.e.\ $f(\bigvee I) \leq \bigvee f[I]$. Since $D_2$ is open-filter-determined, it is enough to prove that $\bigvee f[I]$ belongs to every Scott-open filter to which $f(\bigvee I)$ belongs.
Let $F$ be a Scott-open filter such that $f(\bigvee I) \in F$.
Then, $\bigvee I \in f^{-1}[F]$.
Since $f^{-1}[F]$ is a Scott-open filter, there is $u \in I$ such that $u \in f^{-1}[F]$.
Then, $f(u) \in F$, and thus also $\bigvee f[I]$ (which is above $f(u)$) belongs to $F$.
This proves that $f$ preserves directed joins.}
The open filter morphisms between continuous semilattices (= continuous domains which are also semilattices\footnote{Semilattice = poset with all finite meets.}, see e.g.\ \cite[Def.~I-1.6.(iv)]{GierzHofmannEtAl2003}) are precisely the Scott-continuous semilattice homomorphisms (\cite[Prop.~IV-2.1]{GierzHofmannEtAl2003}).
In particular, the open filter morphisms between locally compact frames are precisely the preframe homomorphisms.

\cref{r:identify} has also the following consequence.

\begin{lemma} [Open filter morphisms $\stackrel{1:1}{\leftrightarrow}$ Galois morphisms]\label{l:Scott-open-filter}
	Let $D_1$ and $D_2$ be open-filter-determined dcpos (such as continuous domains or spatial frames).
	The set of open filter morphisms from $D_2$ to $D_1$ is in bijection with the set of Galois morphisms from $(\ScottOpFilt(D_1), D_1, \ni)$ to $(\ScottOpFilt(D_2), D_2, \ni)$.
\end{lemma}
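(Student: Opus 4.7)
The plan is to reduce directly to \cref{r:identify-morphisms}, in the same way that \cref{l:scott-continuous} was established in the Scott-open-sets case. First, I would note that by \cref{ex:open-filter-determined}, the open-filter-determinedness of $D_1$ and $D_2$ is precisely what ensures that $(\ScottOpFilt(D_i), D_i, \ni)$ is a bi-dcpo for $i = 1, 2$. Moreover, under the identification of \cref{r:identify}, a k-element $F \in \ScottOpFilt(D_i)$ corresponds to the subset $\uo F = \{u \in D_i \mid F \ni u\} = F$ of $D_i$, i.e.\ the identification is the identity on Scott-open filters.

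Next, I would invoke \cref{r:identify-morphisms}: a Galois morphism from $(\ScottOpFilt(D_1), D_1, \ni)$ to $(\ScottOpFilt(D_2), D_2, \ni)$ is precisely a function $f \colon D_2 \to D_1$ such that $f^{-1}[F]$ is a Scott-open filter of $D_2$ whenever $F$ is a Scott-open filter of $D_1$. Comparing this with the definition of open filter morphism recalled just before the lemma, this is exactly the condition that $f$ is an open filter morphism from $D_2$ to $D_1$.

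For concreteness, I would exhibit the bijection explicitly: to an open filter morphism $f \colon D_2 \to D_1$ one assigns the Galois morphism $(\blacklozenge, \Box)$ with $\blacklozenge(F) \coloneqq f^{-1}[F]$ and $\Box \coloneqq f$. The adjunction $\blacklozenge(F) \ni_2 u \Leftrightarrow F \ni_1 \Box(u)$ then unfolds to the tautological equivalence $u \in f^{-1}[F] \Leftrightarrow f(u) \in F$, so the first component $\blacklozenge$ is forced by $\Box$ and automatically lands in $\ScottOpFilt(D_2)$ by hypothesis. The inverse assignment sends a Galois morphism $(\blacklozenge, \Box)$ to $\Box$.

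The only mildly delicate point—and hence the main obstacle—is to verify that the identifications of \cref{r:identify,r:identify-morphisms} specialize transparently in this setting, given that the relation $\ko$ is literally the membership $\ni$ and the k-elements literally are the Scott-open filters of $D_i$; once this is spelled out, the lemma follows immediately from the two cited reformulations, and the argument parallels that of \cref{l:scott-continuous} verbatim, with $\ScottOp(D_i)$ replaced by $\ScottOpFilt(D_i)$ and Scott-continuous functions replaced by open filter morphisms.
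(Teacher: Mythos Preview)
Your proposal is correct and follows exactly the paper's approach: the paper introduces this lemma as a direct consequence of \cref{r:identify} (and implicitly \cref{r:identify-morphisms}), and you have simply unfolded that justification, making explicit the identification $\uo F = F$ and the resulting characterization of Galois morphisms as functions whose preimage preserves Scott-open filters. Your explicit description of the bijection also matches the one the paper gives after the analogous \cref{l:scott-continuous} and in the proof of \cref{t:continuous-domains-and-bicontinuous-bi-dcpos}.
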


Therefore, also the category of open-filter-determined dcpos and open filter morphisms is a full subcategory of the category $\Bidcpo$ of bi-dcpos and Galois morphisms.
Note that the self-duality of $\Bidcpo$, restricted to the full subcategory of continuous domains, gives Lawson self-duality for continuous domains.

We recall that the open filter morphisms between continuous semilattices are precisely the Scott-continuous maps preserving finite meets \cite[Cor.~IV-2.3]{GierzHofmannEtAl2003}. Therefore, \cref{l:Scott-open-filter} has the following specialization.

\begin{lemma}[Continuous semilattice homomorphisms $\stackrel{1:1}{\leftrightarrow}$ Galois morphisms] \label{l:cont-semilattices}
	Let $\O_1$ and $\O_2$ be continuous semilattices (such as locally compact frames). The set of continuous semilattice homomorphisms from $\O_2$ to $\O_1$ (a.k.a.\ preframe homomorphisms, for locally compact frames) is in bijection with the set of Galois morphisms from $(\ScottOpFilt(\O_1), \O_1, \ni)$ to $(\ScottOpFilt(\O_2), \O_2, \ni)$.
\end{lemma}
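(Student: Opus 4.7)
The plan is essentially to specialize the preceding \cref{l:Scott-open-filter} along the identification of open filter morphisms with Scott-continuous finite-meet-preserving maps, an identification that is available precisely because the dcpos involved are continuous semilattices. More precisely, any locally compact frame is in particular a continuous semilattice (it is a continuous dcpo with all finite meets), so the statement's hypothesis covers both classes mentioned, and in both classes the dcpos are automatically open-filter-determined. Hence \cref{l:Scott-open-filter} applies and furnishes a bijection between Galois morphisms from $(\ScottOpFilt(\O_1), \O_1, \ni)$ to $(\ScottOpFilt(\O_2), \O_2, \ni)$ and open filter morphisms from $\O_2$ to $\O_1$.

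Next, I would invoke the cited result \cite[Cor.~IV-2.3]{GierzHofmannEtAl2003} (recalled explicitly in the paragraph just above the lemma), which asserts that between continuous semilattices the open filter morphisms coincide with the Scott-continuous maps preserving finite meets, that is, with the continuous semilattice homomorphisms. For locally compact frames this coincidence specialises further to preframe homomorphisms, matching the parenthetical remark in the statement.

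The proof then consists in composing the two bijections: a continuous semilattice homomorphism $f \colon \O_2 \to \O_1$ is sent to the Galois morphism $(f^{-1}[-] \colon \ScottOpFilt(\O_1) \to \ScottOpFilt(\O_2),\, f \colon \O_2 \to \O_1)$ of \cref{l:Scott-open-filter}, and the inverse reads off the second component $\Box$ of a Galois morphism $(\blacklozenge, \Box)$ and observes that it is a continuous semilattice homomorphism. No further verification is needed beyond noting that continuous semilattices are open-filter-determined, which is standard (see e.g.\ \cite[Thm.~I-3.4 and Prop.~IV-2.8]{GierzHofmannEtAl2003}).

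The only mildly delicate point will be to make explicit that the two naming conventions in the statement -- ``continuous semilattice homomorphism'' in general and ``preframe homomorphism'' for locally compact frames -- really refer to the same class of morphisms that \cite[Cor.~IV-2.3]{GierzHofmannEtAl2003} identifies with open filter morphisms; this is a matter of unpacking definitions and citing the relevant places in \cite{GierzHofmannEtAl2003}, so it is not a technical obstacle but a bookkeeping step. Once that is dispatched, the lemma is an immediate corollary of \cref{l:Scott-open-filter}.
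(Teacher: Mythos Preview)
Your proposal is correct and follows exactly the paper's approach: the paper does not give an explicit proof but simply recalls \cite[Cor.~IV-2.3]{GierzHofmannEtAl2003} (open filter morphisms between continuous semilattices coincide with Scott-continuous finite-meet-preserving maps) and then states the lemma as a specialization of \cref{l:Scott-open-filter}. Your write-up just makes explicit the bookkeeping that the paper leaves implicit.
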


Therefore, the category of continuous semilattices and continuous semilattice homomorphisms is a full subcategory of the category $\Bidcpo$ of bi-dcpos and Galois morphisms.
Note that the self-duality of $\Bidcpo$, restricted to the full subcategory of continuous semilattices, gives Lawson self-duality for continuous semilattices.

\subsubsection{Morphisms of embedded bi-dcpos}

Given a c-relation $R \colon X \rel Y$ between ko-spaces, the function $R[-] \colon \Up(X) \to \Up(Y)$ is left adjoint to the function $R^\leftarrow[-] \colon \Up(Y) \to \Up(X)$.
The pair of functions $(R[-], R^\leftarrow[-])$ will be the prototypical morphism from the embedded bi-dcpo $\Up(X)$ to the embedded bi-dcpo $\Up(Y)$. 

\begin{definition}[$\EBidcpo$, $\DEBidcpo$]
	We let $\EBidcpo$ denote the category of embedded bi-dcpos and embedded Galois morphisms between them (see \cref{l:double-base-morphism}). Moreover, we let  $\DEBidcpo$ denote the full subcategory of $\EBidcpo$ on distributive embedded bi-dcpos.
\end{definition}

\subsubsection{On morphisms: bi-dcpos $\stackrel{1:1}{\leftrightarrow}$ embedded bi-dcpos}

The following is straightforward.

\begin{theorem}[Categorical equivalence between bi-dcpos and embedded bi-dcpos] \label{t:equiv-protolocale}
	The equivalence in \cref{t:equiv-formal-concept-analysis} restricts to an equivalence between the full subcategories $\Bidcpo$ of bi-dcpos and $\EBidcpo$ of embedded bi-dcpos.
\end{theorem}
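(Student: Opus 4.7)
The plan is to observe that this reduces to a purely formal matter once the pieces from earlier in the paper are in place. By definition, $\Bidcpo$ is a full subcategory of the category of purified polarities and Galois morphisms, and $\EBidcpo$ is a full subcategory of the category of double base lattices and embedded Galois morphisms. The equivalence of Theorem \ref{t:equiv-formal-concept-analysis} is already in place between these two ambient categories, so what remains is to verify that the two functors (as recalled on objects in the passage after Theorem \ref{t:equiv-formal-concept-analysis}) restrict well to the subcategories $\Bidcpo$ and $\EBidcpo$.

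Concretely, I would check the following two closure statements, both of which have been done on objects already in Theorem \ref{t:one-to-one} (and would only need to be cited here). First, if $(\K, \O, \ko)$ is a bi-dcpo, then its associated double base lattice $(\mathfrak{B}(\K,\O,\ko),\mathrm{im}(\iota_\K),\mathrm{im}(\iota_\O))$ is an embedded bi-dcpo: this is exactly the content of one direction of Theorem \ref{t:one-to-one}, relying on Lemma \ref{l:preservation-of-meets-and-joins} to ensure that the basic functions preserve codirected meets and directed joins. Second, if $(L,\K,\O)$ is an embedded bi-dcpo, then its associated polarity $(\K,\O,\leq)$ is a bi-dcpo, which is the other direction of Theorem \ref{t:one-to-one} and is immediate from the definition.

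Once these two closure properties are in hand, the restriction of the equivalence of Theorem \ref{t:equiv-formal-concept-analysis} to $\Bidcpo$ and $\EBidcpo$ is automatic: since $\Bidcpo \hookrightarrow \{\text{purified polarities}\}$ and $\EBidcpo \hookrightarrow \{\text{double base lattices}\}$ are full subcategory inclusions, and the equivalence sends objects of one subcategory into the other in both directions, the restricted functors inherit full faithfulness and essential surjectivity from the ambient equivalence. The unit and counit of the ambient equivalence, evaluated at objects of $\Bidcpo$ or $\EBidcpo$, automatically have components in the subcategory and are still isomorphisms there.

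I expect no serious obstacle: the entire argument is bookkeeping on top of Theorems \ref{t:equiv-formal-concept-analysis} and \ref{t:one-to-one}. The only thing worth being slightly careful about is making explicit that the morphism assignments described after Theorem \ref{t:equiv-formal-concept-analysis} really do land in Galois morphisms of bi-dcpos (resp.\ embedded Galois morphisms of embedded bi-dcpos) — but since the morphism notions are defined identically in the subcategory and the supercategory, no new verification is needed beyond the object-level one. This is why the statement is labelled ``straightforward''.
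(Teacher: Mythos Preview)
Your proposal is correct and matches the paper's approach: the paper simply labels the result ``straightforward'' without giving any argument, and your explanation spells out precisely why it is so, namely that the object-level correspondence is already handled by Theorem~\ref{t:one-to-one} and the restriction of an equivalence to full subcategories mapped into each other is automatic.
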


\subsection{The (dual) equivalences}

\subsubsection{Raney duality and its extension to relations}

In this subsection we recall Raney duality for posets and order-preserving maps and its extension to weakening relations (in place of order-preserving maps).

The idea behind Raney duality is that a poset can be recovered by its lattice of upsets, as in \cref{t:char-Raney}.
Given an order-preserving function $f \colon X \to Y$, the preimage function $f^{-1} \colon \Up(Y) \to \Up(X)$ is a complete lattice homomorphism, and every complete lattice homomorphism $\Up(Y) \to \Up(X)$ is of this form.
This is the main content of Raney duality.

\begin{theorem}[Raney duality] \label{t:Raney}
	The category of Raney lattices and complete lattice homomorphisms is dually equivalent to the category of posets and order-preserving functions.
\end{theorem}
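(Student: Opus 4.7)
The plan is to exhibit contravariant functors $F \colon \cat{Pos} \to \cat{Raney}$ and $G \colon \cat{Raney} \to \cat{Pos}$ whose action on objects is as in \cref{t:char-Raney}, namely $F(X) \coloneqq \Up(X)$ and $G(L) \coloneqq \CP(L)$, and to use the natural identifications $X \cong \CP(\Up(X))$, $x \mapsto (\u x, X \setminus \d x)$, and $L \cong \Up(\CP(L))$, $a \mapsto \widehat{a} = \{(k,u) \in \CP(L) \mid k \leq a\}$, already established there. The remaining work is to define $F$ and $G$ on morphisms and verify that these identifications upgrade to natural isomorphisms.

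On morphisms, $F$ sends an order-preserving $f \colon X \to Y$ to its preimage map $f^{-1} \colon \Up(Y) \to \Up(X)$, which is a complete lattice homomorphism since preimages commute with arbitrary intersections and unions. For $G$, I use that every complete lattice homomorphism $h \colon L_2 \to L_1$ preserves arbitrary joins and meets, so admits both a left adjoint $\ell \colon L_1 \to L_2$ and a right adjoint $r \colon L_1 \to L_2$; I then set
\[
G(h) \colon \CP(L_1) \to \CP(L_2), \quad (k,u) \longmapsto (\ell(k), r(u)).
\]

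The main obstacle is to verify that $(\ell(k), r(u))$ actually lies in $\CP(L_2)$, i.e.\ that $\u \ell(k)$ and $\d r(u)$ partition $L_2$. For disjointness: $\ell(k) \leq a \leq r(u)$ would give, by the two adjunctions, $k \leq h(a) \leq u$, contradicting $(k,u) \in \CP(L_1)$. For coverage: given any $a \in L_2$, complete primeness of $(k,u)$ in $L_1$ yields either $k \leq h(a)$ (so $\ell(k) \leq a$) or $h(a) \leq u$ (so $a \leq r(u)$). Order preservation of $G(h)$, preservation of identities, and preservation of composition are then automatic from the standard calculus of adjoints.

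Finally, I would check naturality of the two iso's on morphisms. For $G \circ F \cong \mathrm{id}$: the left adjoint of $f^{-1}$ sends $\u x$ to the smallest upset containing $f[\u x]$, which is $\u f(x)$ by monotonicity of $f$, and the right adjoint sends $X \setminus \d x$ to the largest upset disjoint from $f[\d x]$, which is $Y \setminus \d f(x)$ (again using monotonicity, so $\d f[\d x] = \d f(x)$). Hence $G(F(f))(\u x, X \setminus \d x) = (\u f(x), Y \setminus \d f(x))$, matching $f(x)$ under the iso. For $F \circ G \cong \mathrm{id}$: given $h \colon L_2 \to L_1$ and $a \in L_2$, a direct computation yields
\[
(G(h))^{-1}(\widehat{a}) = \{(k,u) \in \CP(L_1) \mid \ell(k) \leq a\} = \{(k,u) \mid k \leq h(a)\} = \widehat{h(a)},
\]
so $F(G(h))$ corresponds to $h$ under the isomorphisms $L_i \cong \Up(\CP(L_i))$. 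Together these give the desired dual equivalence.
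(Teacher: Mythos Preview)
Your proof is correct. The construction of $G$ on morphisms via the left and right adjoints of a complete lattice homomorphism is the standard one, and your verification that $(\ell(k),r(u))$ is again completely prime, as well as the two naturality checks, are accurate (the computation $\d f[\d x] = \d f(x)$ uses monotonicity of $f$ exactly as you say).

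There is nothing to compare against: the paper does not supply its own proof of this theorem. It states Raney duality as a known result, implicitly relying on the references given after \cref{t:char-Raney} (Raney, Balachandran, Bruns, and \cite{BezhanishviliHarding2020} for the full categorical statement), and proceeds directly to the relational generalization in \cref{t:Raney-for-relations}. Your argument is therefore a self-contained proof of a result the paper only cites, building on the object-level bijection of \cref{t:char-Raney} in the natural way.
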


Given Raney lattices $L_1$ and $L_2$, we call an \emph{adjoint pair} from $L_1$ to $L_2$ a pair $(\blacklozenge,\Box)$ where $\blacklozenge \colon L_1 \to L_2$ and $\Box \colon L_2 \to L_1$ are functions such that $\blacklozenge$ is left adjoint to $\Box$, i.e.: for all $x \in L_1$ and $y \in L_2$, $\blacklozenge(x) \leq y$ if and only if $x \leq \Box(y)$.

Raney duality (\cref{t:Raney}) admits the following generalization.

\begin{theorem}[Raney duality for posets and weakening relations] \label{t:Raney-for-relations}
	The category of Raney lattices and adjoint pairs between them is equivalent (and thus also dually equivalent) to the category of posets and weakening relations between them.
\end{theorem}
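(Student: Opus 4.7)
The plan is to combine the object-level Raney correspondence \cref{t:char-Raney} with the standard bijection of \cite{GalatosJipsen2020} between weakening relations and adjoint pairs on upset lattices, and then to deduce the dual equivalence from the self-duality of both categories.

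On objects I would keep the assignments $X \mapsto \Up(X)$ and $L \mapsto \CP(L)$ used in the proof of \cref{t:char-Raney}. For the hom-set bijection, to a weakening relation $R \colon X \rel Y$ I would assign the pair $(R_\ast, R^\ast)$ defined by $R_\ast(A) \coloneqq R[A]$ and $R^\ast(B) \coloneqq R^{\leftarrow}[B]$; the weakening property of $R$ yields that $R_\ast$ and $R^\ast$ preserve upsets (on the $Y$-side and the $X$-side respectively), and the elementary equivalence $R[A] \subseteq B \Leftrightarrow A \subseteq R^{\leftarrow}[B]$ gives the adjunction $R_\ast \dashv R^\ast$. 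In the reverse direction, to an adjoint pair $(\blacklozenge, \Box) \colon \Up(X) \to \Up(Y)$ I would assign the relation $R$ defined by $x \r{R} y$ iff $y \in \blacklozenge(\u x)$; monotonicity of $\blacklozenge$ and the fact that each $\blacklozenge(\u x)$ is an upset of $Y$ make $R$ a weakening relation.

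I would then check the two assignments are mutually inverse. Since $\blacklozenge$ preserves arbitrary joins (being a left adjoint) and every upset $A$ equals $\bigcup_{x \in A} \u x$, one gets $\blacklozenge(A) = \bigcup_{x \in A} \blacklozenge(\u x) = R_\ast(A)$, so $\blacklozenge = R_\ast$ (and then $\Box = R^\ast$ by uniqueness of right adjoints); conversely, $y \in R_\ast(\u x) = R[\u x]$ iff some $x' \geq x$ satisfies $x' \r{R} y$, which by weakening is equivalent to $x \r{R} y$, so $R$ is recovered. Functoriality is then routine: the identity weakening relation on $X$ is $\leq_X$, and $\leq_X[A] = A$ for every $A \in \Up(X)$, yielding the identity adjoint pair; composition matches via $(S \circ R)[A] = S[R[A]]$.

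Finally, the dual equivalence would follow at once from the self-duality of both categories. On the poset side, sending $X \mapsto X^\op$ and $R \mapsto R^\op$ (with $y \r{R^\op} x$ iff $x \r{R} y$) is an involution, once one notes that the weakening condition ``$x' \leq x \r{R} y \leq y'$ implies $x' \r{R} y'$'' translates verbatim into the weakening condition for $R^\op$ on $(Y^\op, X^\op)$; analogously, $L \mapsto L^\op$ and $(\blacklozenge, \Box) \mapsto (\Box, \blacklozenge)$ is an involution on the Raney-lattice side. I expect no serious obstacle here: the nontrivial object-level content is already packaged in \cref{t:char-Raney}, the morphism-level bijection is the content of \cite{GalatosJipsen2020}, and the remaining work is routine functorial bookkeeping.
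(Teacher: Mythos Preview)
Your proposal is correct and follows essentially the same approach as the paper: the paper's proof consists of two citations, pointing to \cref{t:char-Raney} for the object level and to \cite[Lem.~6.6]{GalatosJipsen2020} for the hom-set bijection, and you have simply unpacked the content of that second citation (including the explicit inverse assignments, which the paper records immediately after the theorem). Your treatment of the dual equivalence via the self-duality of both categories is a reasonable way to justify the parenthetical clause in the statement, which the paper leaves implicit.
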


\begin{proof}
	The object part is the same as in the classical Raney duality (see \cref{t:char-Raney}).
	The bijection between homsets is \cite[Lem.~6.6]{GalatosJipsen2020}.
\end{proof}

The covariant bijective correspondence between the homset between two posets and the homset between their upset lattices is as follows.
A weakening relation $R \colon X \rel Y$ corresponds to the pair 
\[
	(R[-] \colon \Up(X) \to \Up(Y), R^\leftarrow[-] \colon \Up(Y) \to \Up(X)),
\]
and an adjoint pair
\[
(\blacklozenge \colon \Up(X) \to \Up(Y), \Box \colon \Up(Y) \to \Up(X))
\]
corresponds to the relation $R \colon X \rel Y$ defined by $x \r{R} y$ if and only if $\blacklozenge(\u x) \nleq Y \setminus \d y$ if and only if $\u x \nleq \Box(Y \setminus \d y)$.

\subsubsection{On morphisms: ko-spaces $\stackrel{1:1}{\leftrightarrow}$ distributive (embedded) bi-dcpos}

We arrive at our main result.

\begin{theorem}[Categorical equivalence between ko-spaces and distributive (embedded) bi-dcpos] \label{t:MAIN}
	
	The following categories are equivalent (and thus, being self-dual, also dually equivalent):
	\begin{enumerate}
	
		\item
		the category $\KO$ of ko-spaces and c-relations;
		
		\item
		the category $\DBidcpo$ of distributive bi-dcpos and Galois morphisms;
		
		\item
		the category $\DEBidcpo$ of distributive embedded bi-dcpos and embedded Galois morphisms.
	
	\end{enumerate}
\end{theorem}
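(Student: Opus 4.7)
The plan is to assemble the equivalence by combining three ingredients already available in the paper: the object-level bijection \cref{t:bij-corr}, the equivalence between polarities and double base lattices at the morphism level \cref{t:equiv-formal-concept-analysis}, and Raney duality for weakening relations \cref{t:Raney-for-relations}. The equivalence $\DBidcpo \simeq \DEBidcpo$ comes for free: \cref{t:equiv-protolocale} is already an equivalence on the non-distributive level, and distributivity is a property of objects (preserved on both sides by \cref{t:corr-bi-dcpos-embedded-bi-dcpos}), so restricting to the corresponding full subcategories gives the result immediately.

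The heart of the argument is the equivalence $\KO \simeq \DEBidcpo$. First I would define the candidate functor $F \colon \KO \to \DEBidcpo$ on objects by $F(X,\K,\O) = (\Up(X),\K,\O)$ (well-defined by \cref{ex:d-proto-gives-embedded-bi-dcpo} and \cref{c:spatial-iff-distributive}), and on morphisms by sending a c-relation $R \colon X \rel Y$ to the pair $(R[-], R^\leftarrow[-]) \colon \Up(X) \to \Up(Y)$. By \cref{t:Raney-for-relations} this pair is an adjoint pair between Raney lattices, and the defining conditions of a c-relation (\cref{d:c-relation}) say exactly that $R[-]$ restricts to a map $\K_X \to \K_Y$ and $R^\leftarrow[-]$ restricts to a map $\O_Y \to \O_X$; that is precisely the second clause of the definition of an embedded Galois morphism (\cref{l:double-base-morphism}). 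So $F$ lands in $\DEBidcpo$. Functoriality (preservation of identities and composition) is inherited from Raney duality for weakening relations, since the identity c-relation on $(X,\K,\O)$ is the specialization order on $X$, whose image under Raney duality is the identity adjoint pair, and composition of weakening relations matches composition of adjoint pairs.

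For fullness and faithfulness of $F$ on a fixed pair of ko-spaces $(X,\K_X,\O_X)$ and $(Y,\K_Y,\O_Y)$, \cref{t:Raney-for-relations} already provides a bijection between weakening relations $X \rel Y$ and adjoint pairs $\Up(X) \rightleftarrows \Up(Y)$. I only need to check that, under this bijection, c-relations correspond exactly to embedded Galois morphisms. One direction is contained in the previous paragraph. For the converse, given an embedded Galois morphism $(\blacklozenge,\Box)$, let $R$ be the weakening relation it corresponds to under Raney duality, so that $\blacklozenge = R[-]$ and $\Box = R^\leftarrow[-]$ on all of $\Up(X)$ and $\Up(Y)$; then the restriction conditions on $\blacklozenge$ and $\Box$ precisely say that $R[K] \in \K_Y$ for every $K \in \K_X$ and $R^\leftarrow[U] \in \O_X$ for every $U \in \O_Y$. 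This is the c-relation condition, so $R$ is a morphism in $\KO$ with $F(R) = (\blacklozenge,\Box)$. Essential surjectivity of $F$ is \cref{t:bij-corr}. Composing $F$ with the equivalence $\DEBidcpo \simeq \DBidcpo$ from \cref{t:equiv-protolocale} then yields the equivalence $\KO \simeq \DBidcpo$.

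The self-duality of each category (which, combined with the equivalences, yields the dual equivalences) has already been recorded in the remarks on de Groot self-duality and Lawson self-duality, so no further work is needed there. The step I expect to be the most delicate is the translation between the c-relation conditions and the embedded Galois morphism conditions: specifically, one must be careful that the adjoint pair $(R[-], R^\leftarrow[-])$ uniquely determines and is uniquely determined by its restrictions to $(\K_X,\O_X) \to (\K_Y,\O_Y)$. Uniqueness on one side is given by density in the double base lattices $(\Up(X),\K_X,\O_X)$ and $(\Up(Y),\K_Y,\O_Y)$, using that left adjoints preserve arbitrary joins and right adjoints arbitrary meets; this matches the formulas $\bar{\blacklozenge}(a) = \bigvee_{k \in \d a \cap \K} \blacklozenge(k)$ and $\bar{\Box}(b) = \bigwedge_{u \in \u b \cap \O} \Box(u)$ already used in \cref{t:equiv-formal-concept-analysis}, so this potential obstacle is handled by invoking that result rather than redoing the computation.
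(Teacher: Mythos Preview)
Your proposal is correct and follows essentially the same route as the paper: the paper's proof invokes \cref{t:bij-corr} for the object level, \cref{t:Raney-for-relations} for the morphism bijection between $\KO$ and $\DEBidcpo$, and \cref{t:equiv-formal-concept-analysis} (equivalently its restriction \cref{t:equiv-protolocale}) for $\DEBidcpo \simeq \DBidcpo$. Your write-up simply unpacks the step the paper leaves implicit---that under the Raney bijection between weakening relations and adjoint pairs, the c-relation conditions on $R$ match exactly the two restriction clauses defining an embedded Galois morphism---so the approaches coincide.
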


\begin{proof}
	In light of the one-to-one correspondence between ko-spaces and distributive embedded bi-dcpos established in \cref{t:bij-corr}, the bijective correspondence on morphisms between $\KO$ and $\DEBidcpo$ follows from the correspondence between adjoint pairs and weakening relations in \cref{t:Raney-for-relations}.
	
	In light of the one-to-one correspondence between distributive embedded bi-dcpos and distributive bi-dcpos, the bijective correspondence on morphisms between $\DEBidcpo$ and $\DBidcpo$ follows from the correspondence between embedded Galois morphisms and Galois morphisms in \cref{t:equiv-formal-concept-analysis}.
\end{proof}

The equivalence between ko-spaces and distributive bi-dcpos generalizes the equivalence between sober spaces and spatial frames.
So, via \cref{t:MAIN}, we can embed the duality between sober spaces and spatial frames into a self-symmetric context.
This context includes also all well-filtered $T_0$ spaces.


\section{Local compactness} \label{s:local-compactness}

In the previous section we arrived at our main result: the equivalences (and dual equivalences) between ko-spaces, distributive bi-dcpos and distributive embedded bi-dcpos.
This section contains some additional material, in which we focus on local compactness. To be precise, we should say that we focus on \emph{bicontinuity}, a strengthening of local compactness that collapses to the latter in the topological setting.

The message is that bicontinuity makes the two-sorted approach equally expressive to the one-sorted approach.
This is embodied in our presentation of continuous domains as a special class of bi-dcpos (\cref{t:continuous-domains-and-bicontinuous-bi-dcpos}), which may be seen as an explanation for the existence of Lawson duality for continuous domains.
Moreover, we prove that, under bicontinuity, the distributivity of a bi-dcpo can be checked just by looking at the distributivity of either the dcpo or the co-dcpo of which the bi-dcpo is made, if the given dcpo or co-dcpo is a lattice (\cref{t:distributivity-from-O-K}).
In particular, this gives another perspective on why local compactness of a frame implies spatiality: local compactness allows one to transfer the distributivity of the underlying lattice of a frame to the distributivity (and hence spatiality) of the concept lattice of the bi-dcpo associated to it.

We recall that a topological space $X$ is \emph{locally compact} if for every $x \in X$ and every open set $U$ with $x \in U$ there are an open set $U'$ and a compact set $K'$ such that $x \in U' \subseteq K' \subseteq U$ (see \cite[p.~211]{HofmannMislove1981}; for textbook references, see \cite[O-5.9]{GierzHofmannEtAl2003} or \cite[Def.~4.8.1]{GoubaultLarrecq2013}).

\begin{center}
	\begin{tikzpicture}
	
	  \draw[thick, NavyBlue] (0,0) circle (1.9);
	  \node[NavyBlue] at (1.6,1.6) {$U$};
	
	  \draw[thick, gray, dashed] (-1.1,-1.1) rectangle (1.1,1.1);
	  \node[gray] at (1.35,0.9) {$K'$};
	
	  \draw[thick, dashed, NavyBlue!50, dashed] (0,0) circle (0.8);
	  \node[NavyBlue!50] at (0.76,0.76) {$U'$};

	  \filldraw[black] (0,0) circle (0.04) node[above right] {$x$};
	
	\end{tikzpicture}
\end{center}
For Hausdorff spaces (but not in general \cite[p.~93]{GoubaultLarrecq2013}), this condition is equivalent to the condition that every point has a compact neighborhood \cite[Prop.~4.8.7]{GoubaultLarrecq2013}. For any topological space, local compactness can be expressed equivalently as\footnote{For the equivalence, see e.g.\ \cite[Prop.~4.8.14]{GoubaultLarrecq2013} or \cite[Lem.~7.3]{Erne2007}.}:
\begin{quotation}
	for every compact saturated set $K$ and every open set $U$ with $K \subseteq U$, there are an open set $U'$ and a compact saturated set $K'$ such that $K \subseteq U' \subseteq K' \subseteq U$.
\end{quotation}
\begin{center}
	\begin{tikzpicture}
	
	  \draw[thick, NavyBlue] (0,0) circle (1.9);
	  \node[NavyBlue] at (1.6,1.6) {$U$};
	
	  \draw[thick, gray, dashed] (-1.1,-1.1) rectangle (1.1,1.1);
	  \node[gray] at (1.35,0.9) {$K'$};
	
	  \draw[thick, dashed, NavyBlue!50, dashed] (0,0) circle (0.8);
	  \node[NavyBlue!50] at (0.76,0.76) {$U'$};
	
	  \draw[thick] (-0.2,-0.2) rectangle (0.3,0.3);
	  \node at (0.5,0.2) {$K$};
	
	\end{tikzpicture}
\end{center}
Because of its symmetry between open sets and compact saturated sets, this is the condition we adapt to our context.
We now introduce some notation that captures the inclusion of an open set into a compact set in the setting of bi-dcpos.

\begin{notation}[$\ok$] \label{n:con}
	Whenever the white triangle $\ko$ denotes the relation involved in a bi-dcpo $(\K, \O, \ko)$, the black triangle $\ok$ denotes the relation $\ok \colon \O \rel \K$ defined by: 
	\[
	\text{$u \ok k$ $\iff$ for all $l \in \dk u$ and $v \in \uo k$ we have $l \ko v$.}
	\]
	Equivalently, $\uo k \subseteq \uo u$, or, still equivalently, $\dk u \subseteq \dk k$. 
\end{notation}

\begin{remark} \label{r:ko}
	The relation $\ok$ is the restriction to $\O \times \K$ of the hierarchical order on the concept lattice of $\ko$.
	This follows from the facts that $\K$ and $\O$ are join- and meet- dense.
\end{remark}

\begin{definition}[Locally compact]\hfill
	\begin{enumerate}
	
		\item
		A ko-space $(X, \K, \O)$ is \emph{locally compact} if for all $K \in \K$ and $U \in \O$ with $K \subseteq U$ there are $U' \in \O$ and $K' \in \K$ such that $K \subseteq U' \subseteq K' \subseteq U$.
		
		\item
		An embedded bi-dcpo $(L, \K, \O)$ is \emph{locally compact} if for all $k \in \K$ and $u \in \O$ with $k \leq u$ there are $u' \in \O$ and $k' \in \K$ such that $k \leq u' \leq k' \leq u$.
		
		\item
		A bi-dcpo $(\K, \O, \ko)$ is \emph{locally compact} if for all $k \in \K$ and $u \in \O$ with $k \ko u$ there are $u' \in \O$ and $k' \in \K$ such that $k \ko u' \ok k' \ko u$.
	
	\end{enumerate}
\end{definition}

For local compactness to express its power, a further condition is helpful, which in the topological setting is invisible; we call this strengthening of local compactness ``bicontinuity''. The terminology is inspired by the notion of continuity for a dcpo \cite[Definition I-1.6.(ii)]{GierzHofmannEtAl2003}.

\begin{definition}[Bicontinuous]\hfill
	\begin{enumerate}
		
		\item
		A ko-space $(X, \K, \O)$ is \emph{bicontinuous} if it is locally compact, for every $U \in \O$ the set $\{K \in \K \mid K \subseteq U\}$ is directed, and for every $K \in \K$ the set $\{U \in \O \mid K \subseteq U\}$ is codirected.
	
		\item
		A bi-dcpo $(\K, \O, \ko)$ is \emph{bicontinuous} if it is locally compact, for every $u \in \O$ the set $\dk u$ is directed, and for every $k \in \K$ the set $\uo k$ is codirected.
		
		\item
		An embedded bi-dcpo $(L, \K, \O)$ is \emph{bicontinuous} if it is locally compact, for every $u \in \O$ the set $\dk u$ is directed, and for every $k \in \K$ the set $\uo k$ is codirected.
		
	\end{enumerate}
\end{definition}

In the topological context, i.e.\ for triples of the form $(X, \KSat(X), \Op(X))$ for a well-filtered $T_0$ topological space $X$, bicontinuity is equivalent to local compactness; this is because open sets are closed under finite intersections and compact saturated sets under finite unions.

Roughly speaking, the reason why bicontinuity works well is that it allows building a k-set out of a Scott-open filter of o-sets (\cref{t:HofMis} below). 

\subsection{A two-sorted presentation of continuous domains}

In this subsection, we show that continuous domains are in one-to-one correspondence with bicontinuous bi-dcpos (under the axiom of dependent choice).
We start by proving that a continuous domain yields a bicontinuous bi-dcpo.

\begin{lemma} \label{l:cont-dom-is-bicont-bi-dcpo}
	If $D$ is a continuous domain, then $(\ScottOpFilt(D), D, \ni)$ is a bicontinuous bi-dcpo.
\end{lemma}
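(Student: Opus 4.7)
The plan is to verify that $(\ScottOpFilt(D), D, \ni)$ is a bi-dcpo and additionally satisfies the three bicontinuity conditions: codirectedness of $\uo k$ for each $k$, directedness of $\dk u$ for each $u$, and local compactness.

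The bi-dcpo structure is immediate from Example~\ref{ex:open-filter-determined}, since continuous domains are open-filter-determined. Likewise, the codirectedness of $\uo K = K$ for every Scott-open filter $K$ is immediate from the filter axiom.

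The substantive work lies in verifying local compactness and the directedness of $\dk u$. Both reduce to a common combinatorial task: given $u \in D$ lying in one or several Scott-open filters $K_1, \ldots, K_n$, find a Scott-open filter $K' \ni u$ with $K' \subseteq \bigcap_i K_i$ and, for local compactness, additionally $K' \subseteq \u u'$ for some $u' \in K$ with $u' \ll u$. The central tools are the continuity expression $u = \bigvee^{\uparrow}\{v : v \ll u\}$ (with the indicated set directed) and the interpolation property ($v \ll u$ yields $w$ with $v \ll w \ll u$). Combining Scott-openness of each $K_i$ with continuity, one picks way-below approximants $u_i \ll u$ with $u_i \in K_i$, then uses directedness of $\{v : v \ll u\}$ to find a common $u' \ll u$ above all the $u_i$; since each $K_i$ is an upset, $u' \in \bigcap_i K_i$. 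Iterated interpolation, invoking the axiom of dependent choice, then produces an ascending chain $u' = w_0 \ll w_1 \ll w_2 \ll \cdots \ll u$, from which the desired Scott-open filter $K'$ is built.

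The main obstacle is to verify that the constructed set is indeed codirected. In a continuous semilattice (with binary meets), the set $\{y : u' \ll y\}$ is itself a Scott-open filter---binary meets yield the required common lower bounds---and one may simply take $K'$ to be this set. For a general continuous dcpo without binary meets, $\{y : u' \ll y\}$ may fail to be codirected, and one must exploit the full chain $\{w_n\}$. A natural candidate is $K' = \bigcup_n \{y : w_n \ll y\}$, which is Scott-open (union of Scott-open sets, each being an open cone of the continuous dcpo), contains $u$ (since $w_n \ll u$ for every $n$), and is contained in $\u u'$ (since $w_n \geq u'$ throughout the chain). Its codirectedness relies on the chain structure: for $y_1, y_2 \in K'$ with $w_{n_1} \ll y_1$ and $w_{n_2} \ll y_2$, an appropriate earlier chain element $w_m$ with $m < \min(n_1, n_2)$ provides a common lower bound lying in $K'$. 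Some care is required at the base of the chain to reconcile Scott-openness with the containment $K' \subseteq \u u'$, possibly by slightly extending the chain downward through further use of continuity, or by adjusting the definition of $K'$ to exclude the boundary.
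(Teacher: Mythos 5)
Your overall strategy is the natural ``unpack the citations'' version of the paper's proof: the paper gets the bi-dcpo structure from \cref{ex:open-filter-determined}, codirectedness of $\uo K=K$ from the filter axiom, local compactness from \cite[Cor.~3]{Erne2016}, and directedness of $\dk u$ from \cite[Lem.~IV-2.9(ii)]{GierzHofmannEtAl2003}. You correctly reduce the last two items to a single task --- given $u'\ll u$ (chosen inside the relevant Scott-open filters), produce a Scott-open filter $K'$ with $u\in K'\subseteq \u u'$ --- and correctly identify interpolation plus dependent choice as the tools. So the plan is sound.

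The execution of the central construction, however, is broken: your interpolation chain runs in the wrong direction. With an ascending chain $u'=w_0\ll w_1\ll\cdots\ll u$ you have $w_m\leq w_n$ for $m\leq n$, hence $\{y: w_n\ll y\}\subseteq\{y: w_m\ll y\}$, so the union $K'=\bigcup_n\{y: w_n\ll y\}$ collapses to $\{y: u'\ll y\}$ --- exactly the set you had already flagged as possibly failing to be codirected. Your codirectedness argument also fails at the base: the candidate common lower bound $w_m$ with $m\le\min(n_1,n_2)$ lies in $K'$ only if some $w_k\ll w_m$, which is unavailable when $m=0$; and $m=0$ cannot be avoided, since $w_0\ll y$ does not imply $w_1\ll y$, so an element of $K'$ may admit $n=0$ as its only witness. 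The standard fix is to interpolate \emph{downward} from $u$: using dependent choice choose $x_0=u$ and $u'\ll x_{n+1}\ll x_n$ for all $n$, and set $K'=\bigcup_n\{y: x_n\ll y\}$. Now the union is increasing (from $x_{n+1}\leq x_n$), it contains $u=x_0$ (as $x_1\ll x_0$), it is contained in $\u u'$ (as $u'\leq x_n\ll y$ gives $u'\ll y$), it is Scott-open, and it is codirected: for $y_i$ with $x_{n_i}\ll y_i$ take $m=\max(n_1,n_2)$; then $x_m\ll y_1,y_2$, so $x_m$ is a common lower bound, and $x_m\in K'$ because $x_{m+1}\ll x_m$. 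With this reversal your argument goes through and reproduces the two cited facts; as written, the key step does not.
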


\begin{proof}
	Every continuous domain is open-filter-determined \cite[Prop.~I-3.3.(ii)]{GierzHofmannEtAl2003}, and so the bi-dcpo $(\ScottOpFilt(D),D, \ko)$ with $\ko$ the reverse membership $\ni$ is a bi-dcpo (see \cref{ex:open-filter-determined}).
	
	For every $u \in D$ and every Scott-open filter $K$, we have
	\begin{align*}
		u \ok K & \iff \dk u \subseteq \dk K\\
		& \iff \text{for every Scott-open filter $L$ with $u \in L$ we have $K \subseteq L$}\\
		& \iff K \subseteq \u u &&\text{(since $D$ is open-filter-determined).}
	\end{align*}
	
	We show that $(\ScottOpFilt(D), D, \ko)$ is locally compact.
	Let $K \in \ScottOpFilt(D)$ and $u \in D$ be such that $K \ko u$, i.e.\ $u \in K$.
	We shall prove that there are $v \in D$ and $L \in \ScottOpFilt(D)$ such that $K \ko v \ok L \ko u$, i.e.\ $v \in K$, $L \subseteq \u v$ and $u \in L$.
	This is \cite[Cor.~3]{Erne2016} (and requires the axiom of dependent choice).
	Therefore, $(\ScottOpFilt(D), D, \ko)$ is locally compact.
	
	For every $K \in \ScottOpFilt(D)$, the set ${\uparrow_D}K = K$ is a codirected subset of $D$.
	Moreover, for every $u \in D$, the set $\downarrow_{\ScottOpFilt(D)} u =  \{K \in \ScottOpFilt(D) \mid u \in K\}$ is directed with respect to the reverse inclusion order \cite[Lem.~IV-2.9(ii)]{GierzHofmannEtAl2003}.
	Therefore, $(\ScottOpFilt(D), D, \ni)$ is bicontinuous.
\end{proof}

The following is reminiscent of the fact that, in a locally compact space, an open set is way below another one if and only if there is a compact set between the first one and the second one \cite[Prop.~I-1.4]{GierzHofmannEtAl2003}.

\begin{lemma}\label{l:directed-with-join}
	Let $(\K, \O, \ko)$ be a locally compact bi-dcpo, and let $u \in \O$ be such that $\dk u$ is directed.
	\begin{enumerate}
	
		\item \label{i:directed-with-join}
		The set $\{v \in \O \mid \exists k \in \K : v \ok k \ko u\}$ is directed and $u$ is its join  (both in $\O$ and in the associated embedded bi-dcpo).
		
		\item\label{i:way-below-equiv}
		For all $v \in \O$, $v$ is way below $u$ in the dcpo $\O$ if and only if there is $k \in \K$ such that $v \ok k \ko u$.
		
		\item \label{i:is-dir-join}
		In the dcpo $\O$, $u$ is the directed join of the elements way below it.
	
	\end{enumerate}
	 
\end{lemma}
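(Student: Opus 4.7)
The plan is to prove the three parts in sequence, exploiting the fact (Remark \ref{r:ko}) that both $\ko$ and $\ok$ are restrictions of the hierarchical order on the concept lattice of $\ko$, so composites like $v \ok k \ko u$ can be read simply as $v \leq k \leq u$ in the embedded bi-dcpo $L$. Throughout, write $S \coloneqq \{v \in \O \mid \exists k \in \K \colon v \ok k \ko u\}$.

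For (\ref{i:directed-with-join}), I would first note that every $v \in S$ satisfies $v \leq u$ in $L$ (and hence in $\O$), so $u$ is an upper bound. To show $S$ is directed, I would take $v_1, v_2 \in S$ with witnesses $v_i \ok k_i \ko u$. Since $k_1, k_2 \in \dk u$ and $\dk u$ is directed, pick $k_3 \in \dk u$ with $k_1, k_2 \leq k_3$; then apply local compactness to $k_3 \ko u$ to produce $v_3 \in \O$ and $k_4 \in \K$ with $k_3 \ko v_3 \ok k_4 \ko u$. Chasing through the hierarchical order, $v_i \leq k_i \leq k_3 \leq v_3$, so $v_3 \in S$ is a common upper bound. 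For the least upper bound property, suppose $w \in \O$ is an upper bound of $S$ but $u \not\leq w$. By extensionality, there is $k \in \K$ with $k \ko u$ and $k \not\ko w$; local compactness applied to $k \ko u$ yields $v, k'$ with $k \ko v \ok k' \ko u$, so $v \in S$, hence $v \leq w$, and then $k \ko v \leq w$ forces $k \ko w$, a contradiction. The fact that this join agrees with the one computed in the embedded bi-dcpo follows from \cref{l:preservation-of-meets-and-joins}.

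For (\ref{i:way-below-equiv}), the ``if'' direction is the compactness argument: if $v \ok k \ko u$ and $I \subseteq \O$ is directed with $u \leq \bigvee I$, then $k \ko \bigvee I$, so by compactness of the k-element $k$ there is $w \in I$ with $k \ko w$, and then $v \leq k \leq w$. The ``only if'' direction is immediate from (\ref{i:directed-with-join}): if $v$ is way below $u$, apply the way-below relation to the directed join $u = \bigvee S$ to obtain some $v' \in S$ with $v \leq v'$; since $v' \ok k \ko u$ for some $k$ and $\ok$ is a weakening relation (being a restriction of the hierarchical order), $v \leq v' \ok k$ gives $v \ok k \ko u$. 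Part (\ref{i:is-dir-join}) is then simply the combination of (\ref{i:directed-with-join}) and (\ref{i:way-below-equiv}), since $S$ is the set of elements way below $u$ and is directed with supremum $u$.

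The main obstacle is not computational but conceptual: keeping straight which relation ($\ko$, $\ok$, the order on $\O$, the order on $\K$, or the hierarchical order on $L$) is being used at each step, and invoking the right weakening/extensionality principle. Once the reader accepts Remark \ref{r:ko}, all the chains of triangles collapse to transitivity in $L$ and the argument becomes routine.
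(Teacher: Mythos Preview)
Your proposal is correct and follows essentially the same approach as the paper. The paper phrases part~(\ref{i:directed-with-join}) slightly more compactly by observing that $\dk u$ and your set $S$ are mutually cofinal in the concept lattice (each element of one has an upper bound in the other), so directedness and the value of the join transfer from $\dk u$ to $S$; your explicit argument via extensionality unpacks exactly this. Parts~(\ref{i:way-below-equiv}) and~(\ref{i:is-dir-join}) match the paper verbatim.
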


\begin{proof}
	\eqref{i:directed-with-join}.
	Set $I \coloneqq \{k \in \K \mid k \ko u\}$ and $J \coloneqq \{v \in \O \mid \exists k \in \K : v \ok k \ko u\}$.
	Every element of $J$ has an upper bound in $I$ (in the concept lattice of $(\K, \O, \ko)$) by definition of $J$ and $I$, and every element of $I$ has an upper bound in $J$ by local compactness.
	Thus, since $I$ is directed and its join is $u$, the same holds for $J$.
		
	\eqref{i:way-below-equiv}.
	Let $v \in \O$.
	If there is $k \in \K$ such that $v \ok k \ko u$, then, by double compactness of the bi-dcpo $(\K, \O, \ko)$, $v$ is way below $u$, settling the right-to-left implication in the statement.
	To prove the left-to-right implication, we suppose that $v$ is way below $u$.
	By \eqref{i:directed-with-join}, $u$ is the directed join of $\{w \in \O \mid \exists k \in \K : w \ok k \ko u\}$; since $v$ is way below $u$, it follows that $v$ is below some element of $\{w \in \O \mid \exists k \in \K : w \ok k \ko u\}$; this leads to the desired conclusion.
	
	\eqref{i:is-dir-join}.
	This follows immediately from \eqref{i:directed-with-join} and \eqref{i:way-below-equiv}.
\end{proof}

From \cref{l:directed-with-join} and its dual we obtain the following result.

\begin{corollary} \label{c:way-char-dir-codir}
	If $(\K, \O, \ko)$ is a bicontinuous bi-dcpo, then $\O$ and $\K^\op$ are continuous domains.
\end{corollary}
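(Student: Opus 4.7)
The plan is to read this off essentially immediately from \cref{l:directed-with-join} and its dual, using the two halves of the definition of bicontinuity. First, since $(\K, \O, \ko)$ is a bi-dcpo, axiom (L1) already tells us that $\O$ has all directed joins, so $\O$ is a dcpo; dually $\K$ has all codirected meets, so $\K^\op$ is a dcpo.

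Next, I would verify continuity of $\O$. Fix $u \in \O$. Because $(\K, \O, \ko)$ is bicontinuous, $\dk u$ is directed and $(\K, \O, \ko)$ is locally compact, so the hypotheses of \cref{l:directed-with-join} are satisfied for this $u$. Part \eqref{i:way-below-equiv} of that lemma identifies the set of elements way below $u$ with $\{v \in \O \mid \exists k \in \K : v \ok k \ko u\}$; part \eqref{i:directed-with-join} says this set is directed; and part \eqref{i:is-dir-join} says that $u$ is its join in $\O$. Since $u \in \O$ was arbitrary, $\O$ is a continuous domain.

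For $\K^\op$ I would appeal to the symmetry of the notions involved. The triple $(\O, \K, \ko^\partial)$, where $u \ko^\partial k \Leftrightarrow k \ko u$, is again a bi-dcpo (as remarked after \cref{d:bi-dcpo}); moreover, local compactness and bicontinuity are preserved under this involution, since the conditions in both definitions are symmetric in $\K$ and $\O$ (with $\ok$ becoming the analogue of $\ko$, and the roles of directed/codirected and of $\dk{(-)}$/$\uo{(-)}$ interchanged). Applying the argument of the previous paragraph to $(\O, \K, \ko^\partial)$ shows that its ``$\O$-component'', which is $\K$ viewed as a poset whose directed joins are the codirected meets of $\K$, is a continuous domain; in other words, $\K^\op$ is a continuous domain.

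There is no real obstacle: the only thing to double-check is that bicontinuity really is self-dual so that the single-sided \cref{l:directed-with-join} applies to both components without any extra work. Once that symmetry is noted, the corollary follows immediately and no separate computation for $\K^\op$ is needed.
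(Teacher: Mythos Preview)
Your proposal is correct and matches the paper's own proof, which simply reads ``From \cref{l:directed-with-join} and its dual we obtain the following result.'' You have just unpacked what that one line means: apply \cref{l:directed-with-join}\eqref{i:is-dir-join} to each $u \in \O$ using the bicontinuity hypotheses, and then invoke the self-duality of bicontinuous bi-dcpos to obtain the statement for $\K^\op$.
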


\begin{theorem} \label{t:HofMis}
	Let $(\K, \O, \ko)$ be a bicontinuous bi-dcpo.
	\begin{enumerate}
	
		\item \label{i:HM-1}
		The assignment $k \mapsto \uo k$ is a bijection from $\K$ to the set of Scott-open filters of $\O$.
		
		\item \label{i:HM-2}
		The assignment $u \mapsto \dk u$ is a bijection from $\O$ to the set of Scott-open filters of $\K^\op$.
	
	\end{enumerate}
	In particular, $\K^\op$ and $\O$ are Lawson dual continuous domains.
\end{theorem}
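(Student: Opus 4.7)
The two statements are dual under the symmetry $(\K,\O,\ko) \leftrightarrow (\O,\K,\ko^\partial)$ of bi-dcpos, so it suffices to establish \eqref{i:HM-1}; the concluding clause about Lawson duality will then follow from \cref{c:way-char-dir-codir} together with the two bijections identifying $\K^\op$ with $\ScottOpFilt(\O)$ (ordered by reverse inclusion) and $\O$ with $\ScottOpFilt(\K^\op)$. First I would verify that $\uo k \in \ScottOpFilt(\O)$ for every $k \in \K$: it is an upset by weakening, nonempty and codirected by bicontinuity, and Scott-open because if $k \ko \bigvee I$ for directed $I \subseteq \O$, compactness of k-elements (\cref{d:bi-dcpo}) delivers some $u \in I$ with $k \ko u$. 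Injectivity of $k \mapsto \uo k$ is then immediate from purification of the bi-dcpo.

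For surjectivity, let $F \in \ScottOpFilt(\O)$. The heart of the argument is the following approximation lemma: \emph{for every $u \in F$ there exists $l \in \K$ with $l \ko u$ and $\uo l \subseteq F$}. To see this, by \cref{c:way-char-dir-codir} the dcpo $\O$ is continuous, so $u$ is the directed join of $\{v \in \O : v \ll u\}$; since $F$ is Scott-open, some $v \in F$ satisfies $v \ll u$; \cref{l:directed-with-join}\eqref{i:way-below-equiv} then yields $l \in \K$ with $v \ok l \ko u$, whence $\uo l \subseteq \uo v \subseteq F$ because $v \in F$ and $F$ is an upset.

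I would then set $\F := \{l \in \K : \uo l \subseteq F\}$ and, once its codirectedness is established, define $k := \bigwedge_\K \F$, which exists by codirected completeness of $\K$. Granting codirectedness, verifying $\uo k = F$ is routine: the inclusion $\uo k \subseteq F$ uses cocompactness of o-elements to rewrite $k \ko u$ as $l \ko u$ for some $l \in \F$, giving $u \in \uo l \subseteq F$; the reverse inclusion $F \subseteq \uo k$ is immediate from the approximation lemma (pick $l \in \F$ with $l \ko u$; since $k \leq l$ in $\K$, weakening gives $k \ko u$).

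\textbf{The main obstacle.} The delicate step is codirectedness of $\F$: given $l_1, l_2 \in \F$, one must exhibit $l_3 \in \F$ with $l_3 \leq l_1, l_2$ in $\K$. The naive attempt — pick $u_i \in \uo{l_i} \subseteq F$, use filteredness of $F$ to find $u_0 \in F$ below both $u_i$, and apply the approximation lemma to $u_0$ — produces $l_0 \in \F$ with $l_0 \ko u_i$, but this only places $l_0$ below each $u_i$ in the concept lattice, not below $l_1$ and $l_2$ in $\K$. I expect the real argument to iterate this approximation, plausibly via Zorn's lemma on codirected subfamilies of $\F$, using the interpolation $k \ko u' \ok k' \ko u$ afforded by local compactness to convert ``below each o-element of $\uo{l_1} \cup \uo{l_2}$'' into ``below $l_1$ and $l_2$ in $\K$'' at the limit. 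Equivalently, one can work in the concept lattice $L$ and show that $\bigwedge_L F$ already lies in $\K$, presenting it as a codirected meet of k-elements by combining bicontinuity (to guarantee enough approximants from below) with double compactness. This interplay of bicontinuity, local compactness, and cocompactness is the technical crux, and — once it is settled — the rest of the proof is formal.
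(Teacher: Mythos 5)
Your proposal correctly handles well-definedness, injectivity, and the approximation lemma (for every $u \in F$ there are $v \in F$ and $l \in \K$ with $v \ok l \ko u$), and your final verification of $\uo k = F$ is exactly the paper's. But the step you flag as ``the main obstacle''---codirectedness of $\F = \{l \in \K \mid \uo l \subseteq F\}$---is a genuine gap, and your guesses for closing it (iterating the approximation, Zorn's lemma on codirected subfamilies) are not what is needed. The paper's resolution is a change of family rather than a harder argument: it sets $G \coloneqq \{k \in \K \mid \exists u \in F : u \ok k\}$, i.e.\ it requires each member of the family to sit \emph{above some element of $F$ in the concept lattice}, not merely to satisfy $\uo k \subseteq F$. (Note $G \subseteq \F$, since $u \ok k$ with $u \in F$ gives $\uo k \subseteq \uo u \subseteq F$.) With this definition codirectedness is a two-line consequence of $F$ being a filter: given $k_1, k_2 \in G$ with witnesses $u_1 \ok k_1$, $u_2 \ok k_2$, $u_i \in F$, pick $u_0 \in F$ below $u_1$ and $u_2$, interpolate $v \ok k \ko u_0$ with $v \in F$ by the approximation lemma, and read off $k \leq u_0 \leq u_i \leq k_i$ in the concept lattice, whence $k \leq k_i$ in $\K$. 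Your verification then runs verbatim with $\bigwedge G$ in place of $\bigwedge \F$: the inclusion $\uo{\bigwedge G} \subseteq F$ uses cocompactness to produce $l \in G$ with $l \ko u$ and then the witness $v \ok l$ to get $v \leq u$, hence $u \in F$; the inclusion $F \subseteq \uo{\bigwedge G}$ is the approximation lemma again.

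The reason your $\F$ is awkward to attack directly is instructive: once the theorem is proved, $\bigwedge G$ is seen to be the \emph{minimum} of $\F$ (since $\uo{\bigwedge G} = F$ forces $\bigwedge G \leq l$ for every $l$ with $\uo l \subseteq F$), so $\F$ is codirected a posteriori---but any direct proof of this essentially amounts to the theorem itself, and an element $l \in \F$ need not admit any $u \in F$ with $u \ok l$ (the codirected set $\uo l$ need not have a minimum), so $\F$ can be strictly larger than $G$ and carries no usable witnesses. In short: the missing idea is not an iteration or a choice-principle argument but the right choice of generating family, after which the technical crux you identified dissolves.
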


\begin{proof}
	We only prove \eqref{i:HM-1}, since \eqref{i:HM-2} is dual.
	
	For every $k \in \K$, the set $\uo k$ is a filter by bicontinuity and is Scott-open by double compactness; thus, the map in the statement is well-defined.
	Moreover, it is injective since $(\K, \O, \ko)$ is purified.
	To prove surjectivity, let $F$ be a Scott-open filter of $\O$.
	Set
	\[
		G \coloneqq \{k \in \K \mid \exists u \in F : u \ok k\}.
	\]
	By \cref{l:directed-with-join} and by Scott-openness of $F$, for every $u \in F$ there are $v \in F$ and $k \in \K$ such that $v \ok k \ko u$; i.e., for every $u \in F$ there is $k \in G$ such that $k \ko u$.
	This, together with the definition of $G$ and the fact that $F$ is a filter, implies that $G$ is codirected.
	We claim that $F = \uo {\bigwedge G}$.
	The left-to-right inclusion $F \subseteq \uo {\bigwedge G}$ follows from the already established fact that for every $u \in F$ there is $k \in G$ such that $k \ko u$.
	Let us prove the converse inclusion, i.e.\ $\uo {\bigwedge G} \subseteq F$.
	Let $u \in \uo  {\bigwedge G}$.
	Since $G$ is codirected, by double compactness there is $l \in G$ such that $l \ko u$.
	Then, by definition of $G$, there is $v \in F$ such that $v \ok l$.
	Therefore, $v \ok l \ko u$, and thus $v \leq u$, and hence, since $F$ is upwards closed, $u \in F$.
\end{proof}

We arrive at our presentation of continuous domains as special bi-dcpos.

\begin{theorem} \label{t:continuous-domains-and-bicontinuous-bi-dcpos}
	The following categories are equivalent.
	\begin{enumerate}
	
		\item \label{i:contdom}
		The category of continuous domains and open filter morphisms.
		
		\item \label{i:bicont}
		The category of bicontinuous bi-dcpos and Galois morphisms between them.
		
		\item \label{i:bicont-emb}
		The category of bicontinuous embedded bi-dcpos and embedded Galois morphisms between them.
	
	\end{enumerate}
\end{theorem}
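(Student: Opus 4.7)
The plan is to establish the two equivalences (1)$\,\simeq\,$(2) and (2)$\,\simeq\,$(3) separately. The equivalence (2)$\,\simeq\,$(3) follows essentially for free from \cref{t:equiv-protolocale}, while (1)$\,\simeq\,$(2) is the substantive part and assembles \cref{l:cont-dom-is-bicont-bi-dcpo,c:way-char-dir-codir,t:HofMis,l:Scott-open-filter}.

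For (2)$\,\simeq\,$(3), I would restrict the equivalence of \cref{t:equiv-protolocale} between $\Bidcpo$ and $\EBidcpo$ to the respective full subcategories of bicontinuous objects. Since the morphism correspondence requires no adjustment, the only task is to check that the object correspondence preserves and reflects bicontinuity. This reduces to transferring three conditions between $(\K, \O, \ko)$ and its associated embedded bi-dcpo $(\mathfrak{B}(\K, \O, \ko), \iota_\K[\K], \iota_\O[\O])$: local compactness, directedness of $\dk u$ for each $u \in \O$, and codirectedness of $\uo k$ for each $k \in \K$. Because the basic functions $\iota_\K, \iota_\O$ are order-embeddings that preserve codirected meets and directed joins respectively (\cref{l:preservation-of-meets-and-joins}), and because by \cref{r:ko} the relation $\ok$ agrees with the restriction of the hierarchical order to $\O \times \K$, each of these conditions is visibly equivalent on the two sides.

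For (1)$\,\simeq\,$(2), I would take the object assignment $F(D) \coloneqq (\ScottOpFilt(D), D, \ni)$, which is a bicontinuous bi-dcpo by \cref{l:cont-dom-is-bicont-bi-dcpo}, together with its object-level quasi-inverse $G(\K, \O, \ko) \coloneqq \O$, which is a continuous domain by \cref{c:way-char-dir-codir}. On morphisms, \cref{l:Scott-open-filter} provides the needed bijection, though it is naturally contravariant (open filter morphisms $D_2 \to D_1$ match Galois morphisms $F(D_1) \to F(D_2)$); composing with the Lawson self-duality of $\Bidcpo$ converts this into the required covariant equivalence. The composite $G \circ F$ is then the identity on $\text{ContDom}$, both on objects and on morphisms.

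The substantive half, $F \circ G \cong \mathrm{id}$, is delivered by part~(1) of \cref{t:HofMis}: for any bicontinuous bi-dcpo $(\K, \O, \ko)$, the map $k \mapsto \uo k$ is a bijection $\K \to \ScottOpFilt(\O)$, and paired with the identity on $\O$ it yields an isomorphism of bi-dcpos $(\K, \O, \ko) \cong (\ScottOpFilt(\O), \O, \ni) = F(G(\K, \O, \ko))$ (using that $k \ko u$ iff $u \in \uo k$). The main obstacle I expect is the bookkeeping for the contravariance reversal in the morphism correspondence together with the verification that the object-level isomorphism above is natural in Galois morphisms; both are routine unwindings of \cref{l:Scott-open-filter} and \cref{t:HofMis}, so no genuinely new input should be needed.
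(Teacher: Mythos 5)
Your proposal is correct and takes essentially the same route as the paper: the equivalence \eqref{i:contdom}$\leftrightarrow$\eqref{i:bicont} via the assignments $D \mapsto (\ScottOpFilt(D), D, \ni)$ and $(\K, \O, \ko) \mapsto \O$, justified by \cref{l:cont-dom-is-bicont-bi-dcpo}, \cref{c:way-char-dir-codir}, \cref{t:HofMis} and \cref{l:Scott-open-filter}, and \eqref{i:bicont}$\leftrightarrow$\eqref{i:bicont-emb} by restricting \cref{t:equiv-protolocale}. Your explicit attention to the variance in the morphism correspondence of \cref{l:Scott-open-filter} (absorbed via the self-duality) is a bookkeeping point the paper's proof passes over silently, but it does not change the substance of the argument.
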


\begin{proof}
	\eqref{i:contdom}$\leftrightarrow$\eqref{i:bicont}.
	On objects, the assignment \eqref{i:contdom}$\rightarrow$\eqref{i:bicont} maps a continuous domain $D$ to the bicontinuous bi-dcpo $(\ScottOpFilt(D), D, \ni)$.
	This is a well-defined assignment by \cref{l:cont-dom-is-bicont-bi-dcpo}.
	On objects, the assignment \eqref{i:bicont}$\rightarrow$\eqref{i:contdom} maps a bicontinuous bi-dcpo $(\K, \O, \ko)$ to the continuous domain $\O$.
	This is a well-defined assignment by \cref{c:way-char-dir-codir}.
	These assignments are mutually inverse by \cref{t:HofMis}.
	
	On morphisms, to an open filter morphism $f \colon D_1 \to D_2$ we assign the pair 
	\[
	(f^{-1}[-] \colon \ScottOpFilt(D_2) \to \ScottOpFilt(D_1), f \colon D_1 \to D_2).
	\]
	Conversely, to a Galois morphism $(\blacklozenge,\Box)$ from $(\K_1, \O_1, \ko_1)$ to $(\K_2, \O_2, \ko_2)$ we assign the function $\Box \colon \O_2 \to \O_1$.
	The bijective correspondence on morphisms is the content of \cref{l:Scott-open-filter}.
	
	\eqref{i:bicont}$\leftrightarrow$\eqref{i:bicont-emb}.
	This is the restriction of the equivalence in \cref{t:equiv-protolocale}.
\end{proof}

\begin{remark} \label{r:Lawson-duality}
	Under the equivalence in \cref{t:continuous-domains-and-bicontinuous-bi-dcpos}, Lawson self-duality for continuous domains (proved by Lawson in \cite{Lawson1979}; see also \cite[Thm.~IV-2.14]{GierzHofmannEtAl2003} for details) corresponds to the self-duality for bicontinuous bi-dcpos that, on objects, maps $(\K, \O, \ko)$ to $(\O, \K, \ko^\partial)$, i.e.\ that swaps k-elements and o-elements.
	While Lawson self-duality makes use of the axiom of dependent choice (to construct Scott-open filters), the duality for bicontinuous bi-dcpos does not.
	This is similar to \cite{Erne2016}, where the category of $\delta$-domains is proved to be self-dual under no choice principles, and equivalent to the category of continuous domains under dependent choice.
\end{remark}

\subsection{Interdefinability of opens and compacts}

In passing, we draw the following corollaries of \cref{t:HofMis}, which show that $\K$ and $\O$ are interdefinable under bicontinuity.
These are reminiscent of what is called ``Hofmann--Mislove Theorem III'' in \cite[Thm.~IV-2.18]{GierzHofmannEtAl2003}, which asserts that, for a locally compact sober space $X$, the topology of $X$ and the dcpo of compact saturated subsets of $X$ ordered by reverse inclusion are Lawson dual continuous domains.

\begin{corollary}[of \cref{t:HofMis}] \label{c:HM-III}
	Let $(X, \K, \O)$ be a bicontinuous ko-space. 
		\begin{enumerate}
	
		\item \label{i:space-Lawson}(Lawson duals) 
		$\O$ and $\K^\op$ are Lawson dual continuous domains:
		
		\begin{enumerate}
	
			\item \label{i:HM-1-pointset}
			the assignment $K \mapsto \{ U \in \O \mid K \subseteq U \}$ is a bijection from $\K$ to the set of Scott-open filters of $\O$;
			
			\item \label{i:HM-2-pointset}
			the assignment $U \mapsto \{ K \in \K \mid K \subseteq U \}$ is a bijection from $\O$ to the set of Scott-open filters of $\K^\op$.
		
		\end{enumerate}
		
		\item\label{i:space-deGroot} (De Groot duals) 
		For every subset $S$ of $X$,
		
		\begin{enumerate}
	
			\item \label{i:K-is-compact-saturated}
			$S \in \K$ if and only if $\{U \in \O \mid S \subseteq U\}$ is a Scott-open filter of $\O$ with intersection $S$, i.e.
			\begin{enumerate}
			
				\item (compact)
				for every directed subset $\I$ of $\O$ with $S \subseteq \bigcup \I$ there is $U \in \I$ with $S \subseteq U$,
				
				\item \label{i:codir-intersection} (saturated)
				$S$ is an upset and $\{U \in \O \mid S \subseteq U\}$ is codirected;
			
			\end{enumerate}
	
			\item  \label{i:O-is-cocompact-saturated}
			$S \in \O$ if and only if $\{K \in \K \mid K \subseteq S\}$ is a Scott-open filter of $\K^\op$ with union $S$, i.e.
			\begin{enumerate}
			
				\item (co-compact)
				for every codirected subset $\F$ of $\K$ with $\bigcap \F \subseteq S$ there is $K \in \F$ with $K \subseteq S$,
				
				\item \label{i:dir-union} (co-saturated) 
				$S$ is an upset and $\{K \in \K \mid K \subseteq S\}$ is directed.
		
			\end{enumerate}
	
		\end{enumerate}
	
	\end{enumerate}
\end{corollary}

\begin{proof}
	\eqref{i:space-Lawson} follows from \cref{t:HofMis} since $(\K, \O, \subseteq)$ is a bicontinuous bi-dcpo, and \eqref{i:space-deGroot} follows from \eqref{i:space-Lawson}.
\end{proof}

The ``Hofmann--Mislove Theorem III'' in \cite[Thm.~IV-2.18]{GierzHofmannEtAl2003} follows from \cref{c:lc-d-prespace} because, if $X$ is a locally compact sober space, then $(X, \KSat(X), \Op(X))$ is a locally compact ko-space with $\Op(X)$ closed under finite intersections and $\KSat(X)$ under finite unions (more details on this implication are in the proof of \cref{t:lcf-lcss}).

\begin{corollary} \label{c:lc-d-prespace}
	Let $(X, \K, \O)$ be a locally compact ko-space with $\O$ closed under finite intersections and $\K$ under finite unions.
	\begin{enumerate}
	
		\item
		$\K$ is the set of upsets $S$ of $X$ such that for every directed subset $\I$ of $\O$ with $S \subseteq \bigcup \I$ there is $U \in \I$ with $S \subseteq U$.
	
		\item
		$\O$ is the set of upsets $S$ of $X$ such that for every codirected subset $\F$ of $\K$ with $\bigcap \F \subseteq S$ there is $K \in \F$ with $K \subseteq S$.
	
	\end{enumerate}
\end{corollary}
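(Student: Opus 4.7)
The plan is to reduce the statement to Corollary \ref{c:HM-III} by first showing that under the given hypotheses the ko-space $(X, \K, \O)$ is bicontinuous, so that both parts of that corollary become applicable. Local compactness is assumed outright; only the two directedness conditions of bicontinuity need to be checked. Under the standard reading of ``finite'' (including the empty operation), the closure hypotheses give $X \in \O$ and $\varnothing \in \K$. Together with closure under binary intersections and binary unions, this makes $\{U \in \O \mid K \subseteq U\}$ nonempty and closed under binary intersections, hence codirected, and dually makes $\{K \in \K \mid K \subseteq U\}$ nonempty and closed under binary unions, hence directed. Bicontinuity then follows.

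Once bicontinuity is in place, item~(1) drops out of Corollary \ref{c:HM-III}\eqref{i:space-deGroot}\eqref{i:K-is-compact-saturated}, which characterises the k-sets as the subsets $S$ for which $\{U \in \O \mid S \subseteq U\}$ is a Scott-open filter of $\O$ with intersection $S$. The Scott-openness part is exactly the compactness condition in the statement; the filter (codirectedness) part is automatic from $\O$ being closed under finite intersections; and the ``intersection is $S$'' part is automatic whenever $S$ is an upset, because every upset satisfies $S = \bigcap_{x \notin S} (X \setminus \d x)$ and each $X \setminus \d x$ lies in $\O$ by the definition of a ko-space. Hence $S \in \K$ is equivalent to $S$ being an upset plus the compactness clause stated.

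Item~(2) is formally dual, via Corollary \ref{c:HM-III}\eqref{i:space-deGroot}\eqref{i:O-is-cocompact-saturated}. The Scott-openness in $\K^\op$ of $\{K \in \K \mid K \subseteq S\}$ is precisely the co-compactness assertion in the statement; directedness of this set is automatic because $\K$ is closed under finite unions; and the ``union is $S$'' clause is automatic for an upset $S$, since each $x \in S$ is witnessed by the principal upset $\u x \in \K$ with $\u x \subseteq S$. This leaves the co-compactness clause as the only nontrivial requirement, matching item~(2).

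The main (indeed only) subtle point will be the convention on the word \emph{finite}: the argument genuinely needs the nullary cases $X \in \O$ and $\varnothing \in \K$ in order to get nonemptiness of the two families involved in bicontinuity; without them one would have to impose directedness of $\{K \in \K \mid K \subseteq U\}$ and codirectedness of $\{U \in \O \mid K \subseteq U\}$ separately. Beyond this bookkeeping, no real obstacle remains: the corollary is a direct specialisation of the characterisation of the de Groot duals of a bicontinuous ko-space already proved in Corollary \ref{c:HM-III}.
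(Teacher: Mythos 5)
Your proof is correct and follows exactly the route the paper intends: the closure hypotheses upgrade local compactness to bicontinuity (the paper makes this same point just before \cref{t:HofMis}), and then \cref{c:HM-III}\eqref{i:space-deGroot} applies, with the saturation/co-saturation clauses discharged automatically for upsets. The remark about the nullary cases ($X\in\O$, $\varnothing\in\K$) is a fair and correctly handled bookkeeping point, consistent with the paper's usage in \cref{t:lcf-lcss}.
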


We obtain also the following pointfree version of \cref{c:HM-III}.

\begin{corollary}[of \cref{t:HofMis}] \label{c:double-duals-pointfree}
	Let $(L, \K, \O)$ be a bicontinuous embedded bi-dcpo. 
	\begin{enumerate}
	
		\item (Lawson duals)
		$\O$ and $\K^\op$ are Lawson dual continuous domains:
		
		\begin{enumerate}
	
			\item
			the assignment $k \mapsto \uo k$ is a bijection from $\K$ to the set of Scott-open filters of $\O$;
			
			\item
			the assignment $u \mapsto \dk u$ is a bijection from $\O$ to the set of Scott-open filters of $\K^\op$.
		
		\end{enumerate}
		
		\item (De Groot duals)
		For every $a \in L$,
		
		\begin{enumerate}
	
			\item \label{i:K-is-compact-saturated-pointfree}
			$a \in \K$ if and only if $\{u \in \O \mid a \leq u\}$ is a Scott-open filter of $\O$, i.e.
			\begin{enumerate}
			
				\item 
				for every directed subset $I$ of $\O$ with $a \leq \bigvee I$ there is $u \in I$ with $a \leq u$, and
				
				\item\label{i:dG-codir}
				$\{u \in \O \mid a \leq u\}$ is codirected;
			
			\end{enumerate}
	
			\item \label{i:O-is-cocompact-saturated-pointfree}	
			$a \in \O$ if and only if $\{k \in \K \mid k \leq a\}$ is a Scott-open filter of $\K^\op$, i.e.
			\begin{enumerate}
			
				\item
				for every codirected subset $F$ of $\K$ with $\bigwedge F \leq a$ there is $k \in F$ with $k \leq a$, and
				
				\item\label{i:dG-dir}
				$\{k \in \K \mid k \leq a\}$ is directed.
		
			\end{enumerate}
	
		\end{enumerate}
	
	\end{enumerate}
\end{corollary}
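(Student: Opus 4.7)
The plan is to transfer the statement to the bi-dcpo side via the bijective correspondence of \cref{t:one-to-one}. The bi-dcpo associated to $(L, \K, \O)$ is $(\K, \O, \leq)$, where $\leq \colon \K \rel \O$ denotes the restriction of the order on $L$. I first verify that this bi-dcpo is bicontinuous whenever the embedded bi-dcpo is: local compactness transfers because, by \cref{r:ko}, the black triangle $\ok$ of $(\K, \O, \leq)$ is the restriction to $\O \times \K$ of the order on $L$; and the (co)directedness of $\dk u$ and $\uo k$ are verbatim the same condition on both sides.

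Once bicontinuity is in place, part \eqref{i:space-Lawson} is exactly \cref{t:HofMis}, since the assignments $k \mapsto \uo k$ and $u \mapsto \dk u$ are defined identically in both pictures.

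For part \eqref{i:space-deGroot}, I treat \eqref{i:K-is-compact-saturated-pointfree} (the other is dual). The forward direction is immediate from \cref{t:HofMis}\eqref{i:HM-1}. For the backward direction, suppose that $\uo a = \{u \in \O \mid a \leq u\}$ is a Scott-open filter of $\O$. By the surjectivity in \cref{t:HofMis}\eqref{i:HM-1} there is $k \in \K$ with $\uo k = \uo a$. Meet-density of $\O$ in $L$ forces $a = \bigwedge \uo a$ (since $a$ is a lower bound of $\uo a$, while by density $a = \bigwedge S$ for some $S \subseteq \uo a$, giving the reverse inequality) and similarly $k = \bigwedge \uo k$; hence $a = k \in \K$. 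Part \eqref{i:O-is-cocompact-saturated-pointfree} is dual, using join-density of $\K$ in $L$ instead.

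The only substantive issue is the bicontinuity transfer; this is essentially bookkeeping, resting on \cref{r:ko} to identify $\ok$ with the restriction of the ambient order and on the fact that the basic functions are order-embeddings preserving the relevant (co)directed meets and joins (\cref{l:preservation-of-meets-and-joins}). Beyond this, nothing is needed except \cref{t:HofMis} and the density axiom of double base lattices.
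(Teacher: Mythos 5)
Your proposal is correct and follows exactly the route the paper intends: the corollary is stated as an immediate consequence of \cref{t:HofMis} applied to the associated bi-dcpo $(\K, \O, \leq)$ (mirroring the proof given for \cref{c:HM-III}), and your elaboration of the two points the paper leaves implicit---that bicontinuity transfers across the correspondence of \cref{t:one-to-one} via \cref{r:ko}, and that density yields $a = \bigwedge \uo a$ (resp.\ $a = \bigvee \dk a$) so that the backward directions of part (2) follow from the surjectivity in part (1)---is accurate.
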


\begin{remark}\label{r:necessary-dir-codir}
	The ``directed/codirected'' conditions \eqref{i:codir-intersection} and \eqref{i:dir-union} in \cref{c:HM-III} and \eqref{i:dG-codir} and \eqref{i:dG-dir} in \cref{c:double-duals-pointfree} are necessary, as witnessed by the bicontinuous ko-space $(\varnothing, \varnothing, \varnothing)$ and by its associated bicontinuous embedded bi-dcpo $(\{*\}, \varnothing, \varnothing)$.
\end{remark}

\subsection{A two-sorted presentation of continuous semilattices}

The Lawson self-duality for continuous domains restricts to a self-duality for the full subcategory of continuous semilattices \cite[Thm.~IV-2.16]{GierzHofmannEtAl2003}.
This can be viewed in the setting of bicontinuous bi-dcpos with the following.

\begin{theorem} \label{t:meets-and-joins}
	Let $(\K, \O, \ko)$ be a bicontinuous bi-dcpo.
	$\O$ has finite meets if and only if $\K$ has finite joins.\footnote{The analogous statement where both occurrences of ``finite'' are replaced by ``binary'' is false. A counterexample is the bi-dcpo whose associated embedded bi-dcpo is the four-element diamond-shaped lattice $\bot < a, b < \top$ in which $\O = \K = \{\bot, a, b\}$. This corresponds to the fact that the Lawson dual of a continuous domain with binary meets may fail to possess binary meets, with $\{ \bot, a, b\}$ as the corresponding counterexample.}
\end{theorem}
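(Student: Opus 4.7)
The plan is to transfer the statement to the concept lattice $L = \B(\K, \O, \ko)$ via the associated embedded bi-dcpo $(L, \K, \O)$ and then exploit the Hofmann--Mislove-style bijection in Theorem~\ref{t:HofMis}. By Corollary~\ref{c:closed-under-bicont}, under bicontinuity ``$\O$ has all finite meets as a poset'' is equivalent to ``$\O$ is closed under all finite meets in $L$'', i.e.\ $\top_L \in \O$ together with $\O$ being closed under binary meets in $L$; similarly for $\K$ and finite joins. Noting that the dual bi-dcpo $(\O, \K, \ko^\partial)$ is again bicontinuous (directly from the symmetric form of the definitions of local compactness and of the (co)directedness conditions on $\dk u$ and $\uo k$), it suffices to prove ``$\O$ has all finite meets $\Rightarrow \K$ has all finite joins''.

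So assume $\O$ has a top $\top_\O$ and binary meets. For binary joins in $\K$, fix $k, l \in \K$. By Theorem~\ref{t:HofMis}, $\uo k$ and $\uo l$ are Scott-open filters of the dcpo $\O$, and their intersection $\uo k \cap \uo l$ is then Scott-open (finite intersections of Scott-opens are Scott-open), nonempty (it contains $\top_\O$), an upset, and filtered (closed under binary meets, which exist in $\O$). Hence $\uo k \cap \uo l$ is itself a Scott-open filter and, by Theorem~\ref{t:HofMis}, equals $\uo n$ for a unique $n \in \K$. Using the concept-lattice join formula $(A_1, B_1) \vee_L (A_2, B_2) = (\lb_\K(B_1 \cap B_2),\, B_1 \cap B_2)$ together with $\lb_\K(\uo n) = \dk n$, one obtains $\iota_\K(n) = \iota_\K(k) \vee_L \iota_\K(l)$; since $\iota_\K$ is an order-embedding this yields $n = k \vee l$ in $\K$. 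For the bottom, the whole set $\O$ is trivially Scott-open in the dcpo $\O$, nonempty (contains $\top_\O$) and filtered (binary meets in $\O$), hence a Scott-open filter of $\O$; by Theorem~\ref{t:HofMis} it equals $\uo m$ for a unique $m \in \K$, and this $m$ is the minimum of $\K$ (and corresponds to $\bot_L$).

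Combining these two observations, $\K$ has all binary joins and a bottom, hence all finite joins. The converse implication follows by applying the same argument to the dual bicontinuous bi-dcpo $(\O, \K, \ko^\partial)$. I expect the one subtlety requiring genuine care to be the verification that $\uo k \cap \uo l$ is a filter: this is exactly where the \emph{full} finite-meet hypothesis on $\O$ is decisive---$\top_\O$ is used for nonemptiness, binary meets for filteredness---which is also what explains the footnote's ``binary-only'' counterexample, in which $\uo k \cap \uo l$ is simply empty.
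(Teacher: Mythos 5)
Your proof is correct and takes essentially the same route as the paper: reduce to one implication by the self-duality of bicontinuous bi-dcpos, use the bijection of \cref{t:HofMis} between $\K$ and $\ScottOpFilt(\O)$, and observe that a finite intersection of Scott-open filters of a dcpo with finite meets is again a Scott-open filter. The paper merely packages this more compactly, handling all finite joins at once via \cref{c:double-duals-pointfree} instead of treating the binary and empty cases separately and verifying the concept-lattice join formula by hand.
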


\begin{proof}
	We only prove the left-to-right implication; the right-to-left one is dual.
	Suppose that $\O$ has finite meets. Let $k_1,\dots,k_n\in\K$.
	For each $i$, by \cref{t:HofMis}\eqref{i:HM-1}, the set $\uo k_i$
	is a Scott-open filter of $\O$. Since $\O$ has finite meets, the
	intersection $F \coloneqq \bigcap_{i=1}^n \uo k_i$
	is again a Scott-open filter of $\O$. Hence, again by
	\cref{t:HofMis}\eqref{i:HM-1}, there is $k\in\K$ such that $F=\uo k$.
	We claim that $k$ is the join of $k_1,\dots,k_n$ in $\K$.
	
	Indeed, for every $u\in\O$,
	\[
	k \ko u
	\quad\Longleftrightarrow\quad
	u\in F
	\quad\Longleftrightarrow\quad
	\forall i,\; k_i \ko u.
	\]
	By the definition of the specialization order on $\K$, this says
	precisely that $k$ is the least upper bound of $k_1,\dots,k_n$ in
	$\K$. Thus, $\K$ has finite joins.
\end{proof}

\black

\begin{remark}
	Lawson self-duality for continuous semilattices \cite[Thm.~IV-2.16]{GierzHofmannEtAl2003} corresponds to the self-duality for bicontinuous bi-dcpos $(\K, \O, \ko)$ such that $\O$ has finite meets or, equivalently, $\K$ has finite joins.
	The latter, on objects, maps $(\K, \O, \ko)$ to $(\O, \K, \ko^\partial)$, i.e.\ swaps k-elements with o-elements.
\end{remark}

We conclude this subsection by deducing from \cref{t:meets-and-joins} its spatial version (\cref{c:unions-iff-intersections} below). For this, we use the following lemma.

\begin{lemma} \label{l:closed-under-bicont}
	Let $(L, \K, \O)$ be a bicontinuous embedded bi-dcpo.
	\begin{enumerate}
		
		\item
		$\K$ is closed under finite joins in $L$ if and only if the poset $\K$ has finite joins.

		\item
		$\O$ is closed under finite meets in $L$ if and only if the poset $\O$ has finite meets.
	
	\end{enumerate}
\end{lemma}

\begin{proof}
	This follows from \cref{l:closed-under}.
\end{proof}

\begin{corollary}[of \cref{t:meets-and-joins}] \label{c:unions-iff-intersections}
	Let $(X, \K, \O)$ be a bicontinuous ko-space.
	$\K$ is closed under finite unions if and only if $\O$ is closed under finite intersections.
\end{corollary}

\begin{proof}
	This follows from \cref{t:meets-and-joins} and \cref{l:closed-under-bicont}.
\end{proof}

\subsection{Local compactness and spatiality}

We now show that, under bicontinuity, the distributivity of a bi-dcpo can be checked just by looking at the distributivity of either the dcpo or the co-dcpo of which the bi-dcpo is made, if any of these happens to be a lattice.
In particular, this gives another perspective on why local compactness of a frame implies spatiality: local compactness allows one to transfer the distributivity of the underlying lattice of a frame to the distributivity (and hence spatiality) of the concept lattice of the bi-dcpo associated to it.

\begin{theorem} \label{t:distributivity-from-O-K}
	Let $(\K, \O, \ko)$ be a bicontinuous bi-dcpo.
	\begin{enumerate}
	
		\item \label{i:O-is-lattice}
		Suppose that $\O$ has binary meets and binary joins.\footnote{By \cref{l:preservation-of-meets-and-joins,r:preserve-binary}, these hypotheses are equivalent to $\O$ being closed under binary meets and binary joins in the associated embedded bi-dcpo.} Then the bi-dcpo $(\K, \O, \ko)$ is distributive if and only if the lattice $\O$ is distributive.
		
		\item \label{i:K-is-lattice}
		Suppose that $\K$ has binary joins and binary meets.
		Then the bi-dcpo $(\K, \O, \ko)$ is distributive if and only if the lattice $\K$ is distributive.
		
	\end{enumerate}
\end{theorem}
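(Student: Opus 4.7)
My plan is to reduce both parts to the distributivity of the concept lattice $L$ of $(\K, \O, \ko)$: by \cref{t:corr-bi-dcpos-embedded-bi-dcpos}, the bi-dcpo is distributive iff $L$ is. Moreover, (2) will follow from (1) applied to the Lawson-dual bi-dcpo $(\O, \K, \ko^\partial)$, since distributivity of $L$ is self-dual and the hypotheses of (2) translate under the swap to those of (1). So I focus on (1).

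One direction is immediate: if $L$ is distributive, then by \cref{l:preservation-of-meets-and-joins} (for joins) and \cref{r:preserve-binary} (for meets, via the $\uo k$ hypothesis), $\iota_\O : \O \to L$ preserves binary joins and binary meets. Hence $\O$ is a sublattice of the distributive $L$ and so is itself distributive.

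For the converse, I will verify the distributive bi-dcpo condition (\cref{d:distributive}). Suppose $k, l \in \K$ and $u, v \in \O$ satisfy $k \leq u \lor l$, $l \leq v$, and $k \land v \leq u$ in $L$. The first two combine to give $k \leq u \lor v$ in $L$; since $u \lor v \in \O$, this reads $k \ko u \lor v$. The whole statement then reduces to the following key claim: for $k \in \K$ and $u, c \in \O$ with $k \ko u \lor c$ and $k \land c \leq u$ in $L$, one has $k \ko u$. Via \cref{r:identify}, each $k \in \K$ is identified with the Scott-open filter $K := \uo k$ of $\O$ (indeed a filter, thanks to the $\uo k$ hypothesis), and the claim reads: if $u \lor c \in K$ and for every $k' \in \K$ with $K \subseteq \uo{k'}$ and $c \in \uo{k'}$ one has $u \in \uo{k'}$, then $u \in K$.

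The strategy for the key claim mirrors \cref{ex:distributive}: the set $H := \{w \in \O : w \lor u \in K\}$ is a Scott-open filter of $\O$ (closure under binary meets uses distributivity of $\O$) containing $K$ and $c$, so applying the hypothesis with $k' = H$ would give $u \in H$ and hence $u \in K$. The main obstacle is that $\K$ is generally a proper subfamily of $\ScottOpFilt(\O)$, so $H$ need not lie in $\K$. To circumvent this I plan to use local compactness to approximate $H$ from within $\K$: starting from an interpolation $k \ko u_1 \ok k_1 \ko u \lor c$, decompose $u_1 = (u_1 \land u) \lor (u_1 \land c)$ via $\O$-distributivity and iterate, exploiting the closure of $\K$ under codirected meets (equivalently, directed filter unions) to produce in $\K$ a $k'$ realizing enough of $H$ to conclude. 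This convergence argument is the technical core of the proof.
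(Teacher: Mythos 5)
The reduction of part (2) to part (1) via the swap $(\K,\O,\ko)\mapsto(\O,\K,\ko^\partial)$, the forward direction via \cref{l:preservation-of-meets-and-joins} and \cref{r:preserve-binary}, and the reformulation of \cref{d:distributive} as a single-cut rule ($k\le u\lor c$ and $c\land k\le u$ imply $k\le u$, with $c=v$) are all fine. The problem is that you stop exactly where the theorem's content begins: the passage from ``$u\in H$ for the Scott-open filter $H=\{w\in\O\mid w\lor u\in K\}$'' to ``$u\in K$'' requires a witness \emph{in $\K$}, and your plan to ``approximate $H$ from within $\K$'' by an iterated decomposition $u_1=(u_1\land u)\lor(u_1\land c)$ is only announced, not carried out. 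It is also doubtful that it can be carried out as stated: under the hypotheses of part (1) the bi-dcpo need not be bicontinuous (nothing forces $\dk u$ to be directed, nor $\uo k$ to be nonempty for every $k$), so \cref{t:HofMis} is unavailable and there is no reason why $H$, or any Scott-open filter squeezed between $K\cup\{c\}$ and $H$, should be of the form $\uo{k'}$ for $k'\in\K$; the decomposition step you describe replaces $(u,c)$ by smaller $(u_1\land u,\,u_1\land c)$ without any visible progress towards $k\ko u$.

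The paper's proof goes the other way around: instead of pushing the filter $H$ down into $\K$, it lifts $k$ up into $\O$. Using local compactness it interpolates $l\le v'\le l'\le v$ with $v'\in\O$, $l'\in\K$, so that $k\le u\lor v'$ and $l'\land k\le u$; then, since $\uo k$ is nonempty and closed under binary lower bounds (hence codirected), it finds $k_1,k_2\in\uuk k$ with $k_1\le u\lor v'$ and $l'\land k_2\le u$ (the latter via \cref{l:Wilker}), takes a common lower bound $k'\in\uuk k$, and picks $w\in\O$ with $k\le w\le k'$. Then $w\le u\lor v'$ and $v'\land w\le u$ hold \emph{inside the lattice $\O$}, whose binary meets and joins agree with those of $L$, so the cut rule for the distributive lattice $\O$ (\cref{r:distributivity-and-cut-rule}) yields $w\le u$ and hence $k\le u$. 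You would need either to reproduce an argument of this kind or to genuinely complete your convergence construction; as it stands, the proof has a gap at its central step.
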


\begin{proof}
	\eqref{i:O-is-lattice}.
	We work in the associated embedded bi-dcpo $(L, \K, \O)$.
	First of all, we note that all binary joins in $\O$ are computed as in $L$ by \cref{l:preservation-of-meets-and-joins}, and all binary meets in $\O$ are computed as in $L$ by \cref{r:preserve-binary}.
	This already proves the implication ``$\Rightarrow$''.
	We are left to prove the implication ``$\Leftarrow$''.
	
	Let $k,l\in\mathcal K$ and $u,v\in\mathcal O$ be such that $k \leq u \lor l$, $l \leq v$ and $v \land k \leq u$.
	We must prove $k \leq u$.
	
	From $k \leq u \lor l$ and $l\leq v$, we deduce $k \leq u\vee v$.
	
	Define
	\[
	H \coloneqq  \{w\in\mathcal O\mid k \leq w\vee u\}.
	\]
	We claim that $H$ is a Scott-open filter of $\mathcal O$.
	It is upward closed. It is nonempty, since $v\in H$. If $w_1,w_2\in H$, then
	$k \leq w_1\vee u$ and $k \leq w_2\vee u$.
	Then, using the fact that binary joins and binary meets in $\O$ are computed as in $L$, and that $\O$ is distributive, we have
	\[
	k \leq (w_1 \vee u) \land (w_2 \vee u) = (w_1 \land w_2) \vee u.
	\]
	Therefore, $w_1 \land w_2 \in H$.
	Finally, if $D\subseteq\mathcal O$ is directed and
	$\bigvee D\in H$, then
	\[
	k \leq \Bigl(\bigvee D\Bigr) \lor u = \bigvee_{d\in D}(d\vee u)
	\]
	Since $(\K,\O, \ko)$ is bicontinuous, by \cref{t:HofMis} the set $\uo k$ is Scott-open.
	Therefore, there is $d\in D$ with $d\vee u\in \uo k$, i.e., $d\in H$. Thus, $H$ is Scott-open.
	
	Therefore, $H$ is a Scott-open filter of $\O$.
	Thus, by \cref{t:HofMis}, there is $h \in \K$ such that $H = \uo h$.
	Since $\uo k\subseteq H \subseteq \uo h$, we have $h\leq k$; and since
	$v\in H=\uo h$, we have $h \leq v$.
	Therefore, $h \leq v \land k \leq u$.
	Thus, $u\in \uo h=H$.
	Therefore $k \leq u\vee u$, and so $k \leq u$, as required.
	
	\eqref{i:K-is-lattice}. This is dual to \eqref{i:O-is-lattice}.
\end{proof}

In brief,
\[
\text{bicontinuity + lattice distributivity of $\O$ or $\K$ $\Rightarrow$ spatiality}.
\]

We recall that a frame is \emph{locally compact} if it is continuous as a dcpo (see e.g.\ \cite[p.~310]{Johnstone1982}). A corollary of \cref{t:distributivity-from-O-K} is the following well-known fact (see e.g.\ \cite[Thm.~4.3, p.~311]{Johnstone1982}).

\begin{corollary} \label{c:lc-is-spatial}
	Every locally compact frame is spatial.
\end{corollary}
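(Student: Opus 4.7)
The plan is to realize $\O$ as the o-part of a locally compact bi-dcpo whose dcpo component is a distributive lattice, and then invoke Theorem \ref{t:distributivity-from-O-K} together with Corollary \ref{c:spatial-iff-distributive}.

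First, since $\O$ is a locally compact frame, it is continuous as a dcpo, hence a continuous domain. By Lemma \ref{l:cont-dom-is-bicont-bi-dcpo}, the triple $(\ScottOpFilt(\O), \O, \ni)$ is then a bicontinuous (in particular, locally compact) bi-dcpo. Since $\O$ is a frame, it is a distributive lattice with all binary meets and binary joins. Moreover, for each Scott-open filter $K \in \ScottOpFilt(\O)$ the set $\uo K = K$ is a filter of $\O$, and hence it has a (greatest) lower bound for every pair of its elements. Thus all hypotheses of Theorem \ref{t:distributivity-from-O-K}\eqref{i:O-is-lattice} are satisfied.

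Applying Theorem \ref{t:distributivity-from-O-K}\eqref{i:O-is-lattice}, we conclude that the bi-dcpo $(\ScottOpFilt(\O), \O, \ni)$ is distributive, and then by Corollary \ref{c:spatial-iff-distributive} it is spatial. It remains to translate the spatiality of the bi-dcpo into spatiality of the frame $\O$ in the usual sense. This is essentially automatic from Remark \ref{r:point=cp-pair}, which identifies the completely prime pairs of $(\ScottOpFilt(\O), \O, \ni)$ with the usual points of the frame $\O$: spatiality of the bi-dcpo means there are enough completely prime pairs to separate distinct elements of $\O$, and via this remark this is exactly the statement that the points of $\O$ separate its elements, i.e., $\O$ is spatial as a frame.

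The argument is essentially a one-line application of earlier results. The only point that requires a moment's care is the last translation step from ``the bi-dcpo is spatial'' to ``the frame is spatial''; this is not an obstacle, but rather the precise content of Remark \ref{r:point=cp-pair} specialized to frames (where meet-primeness and meet-irreducibility coincide by distributivity).
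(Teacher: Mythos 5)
Your proof is correct and follows the same main line as the paper: Lemma \ref{l:cont-dom-is-bicont-bi-dcpo} to get a bicontinuous bi-dcpo, Theorem \ref{t:distributivity-from-O-K}\eqref{i:O-is-lattice} (whose hypotheses you rightly check, noting that $\uo K = K$ is a filter) to get distributivity, and Corollary \ref{c:spatial-iff-distributive} to get spatiality. The only divergence is the final translation from ``the bi-dcpo is spatial'' to ``the frame is spatial'': the paper's proof passes to the associated ko-space $(X,\K',\O')$ and checks, via Lemmas \ref{l:preservation-of-meets-and-joins} and \ref{l:preserve}, that $\O'$ is an actual topology on $X$, whereas you argue via Remark \ref{r:point=cp-pair} that completely prime pairs are exactly frame-theoretic points, so having enough of them is frame spatiality; this is precisely one of the alternative endings the paper records in the footnote to its own proof, and it is equally valid.
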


\begin{proof}
	By \cref{l:cont-dom-is-bicont-bi-dcpo}, $(\ScottOpFilt(\O), \O, \ni)$ is a bicontinuous bi-dcpo.
	By \cref{t:distributivity-from-O-K}\eqref{i:O-is-lattice}, and since any frame is a distributive lattice, the bi-dcpo $(\ScottOpFilt(\O), \O, \ni)$ is distributive.
	By \cref{c:spatial-iff-distributive}, $(\ScottOpFilt(\O), \O, \ni)$ is spatial as a bi-dcpo. Let $(X, \K', \O')$ be the corresponding ko-space.
	The inclusion $\O' \subseteq \Up(X)$ preserves directed joins.
	By \cref{l:preservation-of-meets-and-joins}, it preserves also finite joins, and by \cref{l:preserve} it preserves also finite meets.
	Therefore, $\O'$ is a topology.\footnote{Another way to prove the statement is by observing that every locally compact frame has enough Scott-open filters, and that therefore it is distributive by \cref{ex:distributive}. Yet another way is to observe that distributivity implies having enough completely prime pairs (\cref{t:distr-spatial}) and that these bijectively correspond to points of $\O$ in the sense of frame theory (\cref{r:point=cp-pair}).}
\end{proof}

We now strengthen \cref{c:lc-is-spatial} to recover the duality between locally compact frames and locally compact sober spaces.

\begin{theorem} \label{t:lcf-lcss}
	There is a one-to-one correspondence between
	\begin{enumerate}
	
		\item \label{i:lcf}
		locally compact frames;
		
		\item \label{i:lcf-pl}
		bicontinuous bi-dcpos $(\K, \O, \ko)$ with $\O$ a bounded distributive lattice;
		
		\item \label{i:lcf-epl}
		locally compact distributive embedded bi-dcpos $(L, \K, \O)$ with $\O$ closed in $L$ under finite meets and finite joins and $\K$ under finite joins;
		
		\item \label{i:loccomp-sob-dps}
		locally compact ko-spaces $(X, \K, \O)$ with $\O$ closed under finite intersections and finite unions and $\K$ under finite unions;
		
		\item \label{i:locally-compact-sober-space}
		locally compact sober spaces.
	\end{enumerate}
\end{theorem}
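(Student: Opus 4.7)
The plan is to chain together four equivalences $(1)\leftrightarrow(2)\leftrightarrow(3)\leftrightarrow(4)\leftrightarrow(5)$, each of which can be assembled from results already proved in the paper.

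For $(1)\leftrightarrow(2)$, I would send a locally compact frame $\O$ to the bi-dcpo $(\ScottOpFilt(\O),\O,\ni)$, which is bicontinuous by \cref{l:cont-dom-is-bicont-bi-dcpo}, with $\O$ a bounded distributive lattice because it is a frame. In the reverse direction, from a bicontinuous bi-dcpo $(\K,\O,\ko)$ in which $\O$ is a bounded distributive lattice, I would extract $\O$ and verify that it is a locally compact frame: by \cref{c:way-char-dir-codir} it is a continuous domain, and since it has all finite joins it is a complete continuous lattice, hence meet-continuous (binary meets preserve directed joins); meet-continuity combined with finite distributivity promotes to arbitrary distributivity of finite meets over joins, so $\O$ is a frame, locally compact because continuous. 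The round trip reproduces the bi-dcpo up to isomorphism via the bijection $k\mapsto\uo k$ of \cref{t:HofMis}.

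For $(2)\leftrightarrow(3)$, I would restrict the bijection of \cref{t:one-to-one} and match the two compound hypotheses. Starting from (2), the associated embedded bi-dcpo $(L,\K,\O)$ inherits local compactness; distributivity of $L$ follows from \cref{t:distributivity-from-O-K}\eqref{i:O-is-lattice} (applicable because bicontinuity makes $\uo k$ codirected) combined with \cref{t:corr-bi-dcpos-embedded-bi-dcpos}; closure of $\O$ in $L$ under finite meets comes from \cref{l:closed-under}, under finite joins from \cref{l:the-simple-closure}, and $\K$ has finite joins by \cref{t:meets-and-joins}, computed in $L$ via \cref{r:preserve-binary} (with $\bot_L\in\K$ forced by join-density of $\K$). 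Conversely, from (3) the closure hypotheses make $\dk u$ directed and $\uo k$ codirected, so the associated bi-dcpo is bicontinuous with $\O$ a bounded distributive lattice whose operations are inherited from $L$. For $(3)\leftrightarrow(4)$, I would restrict the bijection of \cref{t:bij-corr}: under $X\leftrightarrow(\Up(X),\K,\O)$, closure of $\O$ and $\K$ under finite unions and intersections in $X$ translates to closure under finite joins and meets in $\Up(X)$, and local compactness is defined identically on both sides.

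For $(4)\leftrightarrow(5)$, a locally compact sober space $X$ yields the locally compact ko-space $(X,\KSat(X),\Op(X))$ by \cref{l:well-filtered-spaces}, with $\Op(X)$ closed under arbitrary unions and finite intersections and $\KSat(X)$ closed under finite unions. Going the other way, from a locally compact ko-space with the stated closures, $\O$ is a topology on $X$ (directed plus finite unions yield arbitrary unions, including $\varnothing$ as the empty finite union; finite intersections include $X$); the resulting space is $T_0$, well-filtered by axiom \eqref{i:cocompactness-ps}, and locally compact, hence locally compact sober. The closure hypotheses force the ko-space to be bicontinuous, so \cref{c:HM-III} identifies $\K$ with $\KSat(X,\O)$, closing the round trip. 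The main obstacle I anticipate is the bookkeeping in $(2)\leftrightarrow(3)$: one has to thread \cref{l:closed-under}, \cref{l:the-simple-closure}, \cref{t:meets-and-joins}, and \cref{t:distributivity-from-O-K} together to verify that the compound condition ``bicontinuous with $\O$ a bounded distributive lattice'' corresponds exactly, under the passage to the concept lattice, to the compound condition ``locally compact distributive embedded bi-dcpo with $\O$ closed under finite meets and joins and $\K$ closed under finite joins''; the other links are essentially book-keeping on top of results already established.
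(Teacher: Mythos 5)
Your proposal follows essentially the same route as the paper's proof: the same chain of five classes linked by restrictions of the bijections in \cref{t:bij-corr}, supported by the same lemmas (\cref{l:cont-dom-is-bicont-bi-dcpo}, \cref{t:HofMis}, \cref{t:distributivity-from-O-K}, \cref{l:closed-under}, \cref{l:the-simple-closure}, \cref{t:meets-and-joins}, \cref{c:HM-III}/\cref{c:lc-d-prespace}, \cref{l:well-filtered-spaces}). Two small remarks. First, in the reverse direction of $(1)\leftrightarrow(2)$ you derive the frame law from the external (standard) fact that continuous lattices are meet-continuous, whereas the paper proves the directed distributive law directly from double compactness by testing against k-elements; both are valid, the paper's being self-contained. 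Second, the parenthetical ``$\bot_L\in\K$ forced by join-density of $\K$'' is not a correct justification: join-density only says $\bot_L=\bigvee\{k\in\K\mid k\le\bot_L\}$, which holds vacuously when that set is empty (e.g.\ $L=\{0,1\}$ with $\K=\{1\}$ is join-dense yet misses $\bot_L$). What actually yields $\bot_L\in\K$ is bicontinuity, which makes $\dk u$ directed---hence nonempty---for every $u\in\O$, so that \cref{l:closed-under}\eqref{i:K-closed-under-joins} together with \cref{t:meets-and-joins} gives closure of $\K$ under all finite (including nullary) joins in $L$; with that repair, which uses only lemmas you already cite, the argument is complete.
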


\begin{proof}

	\eqref{i:lcf}$\leftrightarrow$\eqref{i:lcf-pl}.
	Given a locally compact frame $\O$, $(\ScottOpFilt(\O), \O, \ni)$ is a bicontinuous bi-dcpo by \cref{l:cont-dom-is-bicont-bi-dcpo}, and $\O$ is clearly a bounded distributive lattice (the specialization order on $\O$ is the same as the original order of the frame $\O$).
	Let now $(\K, \O, \ko)$ be a bicontinuous bi-dcpo such that $\O$ is a bounded distributive lattice.
	By \cref{t:HofMis}\eqref{i:HM-1}, the assignment $k \mapsto \uo k$ is a bijection from $\K$ to the set of Scott-open filters of $\O$.
	To prove that $\O$ is a locally compact frame, it remains to prove the infinite distributive law restricted to directed joins.\footnote{In the main body we give a direct proof for this restricted distributivity law. However, we note that this restricted distributive law is also a consequence of an interesting result in \cite[Thm.~5.1]{Erne2007}, which affirms that any dcpo with all finite meets and with enough Scott-open filters is a preframe.}
	Let $I$ be a directed subset of $\O$, and let $u \in \O$; we shall prove
	\[
		u \land \bigvee I = \bigvee_{v \in I} (u \land v).
	\]
	The inequality $\geq$ is immediate.
	Let us prove the converse inequality, i.e.\ $u \land \mleft(\bigvee I \mright) \leq \bigvee_{v \in I} (u \land v)$.
	It is enough to prove that every $k \in \K$ below $u \land \bigvee I$ is below $\bigvee_{v \in I} (u \land v)$.
	Let $k$ be such.
	Then, $k \leq u$ and $k \leq \bigvee I$.
	Therefore, by double compactness, there is $v_0 \in I$ such that $k \leq v_0$.
	Therefore, $k \leq u \land v_0 \leq \bigvee_{v \in I} (u \land v)$.
	This proves the infinite distributive law restricted to directed joins.
	Therefore, $\O$ is a locally compact frame.
	This proves the bijective correspondence \eqref{i:lcf}$\leftrightarrow$\eqref{i:lcf-pl}.

	\eqref{i:lcf-pl}$\leftrightarrow$\eqref{i:lcf-epl}.
	The claimed bijection is a restriction of the bijection between distributive bi-dcpos and distributive embedded bi-dcpos (\cref{t:bij-corr}).
	Let us prove that it is well-defined.
	The assignment \eqref{i:lcf-pl}$\rightarrow$\eqref{i:lcf-epl} is well-defined by \cref{t:distributivity-from-O-K}\eqref{i:O-is-lattice} (which guarantees the distributivity of the embedded bi-dcpo), \cref{l:closed-under} (which guarantees the closure of $\O$ under finite meets), \cref{l:the-simple-closure} (which guarantees the closure of $\O$ under finite joins) and \cref{t:meets-and-joins,l:closed-under-bicont} (which guarantee the closure of $\K$ under finite joins).
	It is straightforward that the assignment \eqref{i:lcf-epl}$\rightarrow$\eqref{i:lcf-pl} is well-defined.
	
	\eqref{i:lcf-epl}$\leftrightarrow$\eqref{i:loccomp-sob-dps}.
	The claimed bijection is a restriction of the bijection between distributive embedded bi-dcpos and ko-spaces (\cref{t:bij-corr}).
	The fact that this restriction is well-defined is immediate.

	\eqref{i:loccomp-sob-dps}$\leftrightarrow$\eqref{i:locally-compact-sober-space}.
	We start by recalling that a topological space is locally compact and sober if and only if it is locally compact, well-filtered and $T_0$.
	The assignment \eqref{i:locally-compact-sober-space}$\rightarrow$\eqref{i:loccomp-sob-dps} maps a locally compact well-filtered $T_0$ space $X$ to $(X, \KSat(X), \Op(X))$; the latter is a ko-space by \cref{l:well-filtered-spaces}, and all other required properties are clear.
	The assignment \eqref{i:loccomp-sob-dps}$\rightarrow$\eqref{i:locally-compact-sober-space} maps a ko-space $(X, \K, \O)$ with $\O$ closed under finite intersections and finite unions and $\K$ under finite unions to the set $X$ equipped with $\O$ as topology. By \cref{c:lc-d-prespace}, $\K$ is precisely the set of compact saturated subsets of the topological space $(X, \O)$. Therefore, $(X, \O)$ is well-filtered and $T_0$ by \cref{l:well-filtered-spaces}, $(X, \O)$ is locally compact because $(X, \K, \O)$ is locally compact, and the assignment \eqref{i:loccomp-sob-dps}$\rightarrow$\eqref{i:locally-compact-sober-space}$\rightarrow$\eqref{i:loccomp-sob-dps} is the identity. The fact that the assignment \eqref{i:locally-compact-sober-space}$\rightarrow$\eqref{i:loccomp-sob-dps}$\rightarrow$\eqref{i:locally-compact-sober-space} is the identity is immediate.
	This proves the one-to-one correspondence \eqref{i:loccomp-sob-dps}$\leftrightarrow$\eqref{i:locally-compact-sober-space}.
\end{proof}

For well-filtered $T_0$ spaces $X$ and $Y$, we use the expression ``c-relation from $X$ to $Y$'' for a c-relation from $(X, \KSat(X), \Op(X))$ to $(Y, \KSat(Y), \Op(Y))$ in the sense of \cref{d:c-relation}; i.e., for a weakening relation $R \colon X \rel Y$ (with respect to the specialization orders) such that the forward image of a compact saturated set is compact saturated and the preimage of a closed set is closed.
We now improve \cref{t:lcf-lcss} to include morphisms.
The equivalence established in our main result (\cref{t:MAIN}) restricts to the two full subcategories of locally compact frames and locally compact sober spaces:

\begin{theorem}
	The category of locally compact frames and preframe homomorphisms is equivalent (as well as dually equivalent) to the category of locally compact sober spaces and c-relations.
\end{theorem}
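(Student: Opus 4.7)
The plan is to restrict the equivalence $\KO \simeq \DBidcpo$ from \cref{t:MAIN} to the appropriate full subcategories on each side, and then identify these with $\mathsf{LCSob}$ and $\mathsf{LCFrm}$ respectively. For the space side, \cref{t:lcf-lcss} already identifies locally compact sober spaces with the locally compact ko-spaces $(X, \K, \O)$ in which $\O$ is closed under finite intersections and finite unions and $\K$ under finite unions. I claim that under the equivalence of \cref{t:MAIN}, these correspond, up to isomorphism of bi-dcpos, to the triples $(\ScottOpFilt(\O), \O, \ni)$ for $\O$ a locally compact frame. The point is that such a ko-space is bicontinuous: local compactness holds by assumption; the set $\{K \in \K \mid K \subseteq U\}$ is directed for every $U \in \O$ because $\K$ is closed under finite unions; and the set $\{U \in \O \mid K \subseteq U\}$ is codirected for every $K \in \K$ because $\O$ is closed under finite intersections. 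Then \cref{t:HofMis} supplies the desired bi-dcpo isomorphism $(\K, \O, \subseteq) \cong (\ScottOpFilt(\O), \O, \ni)$, and $\O$ is a locally compact frame by \cref{t:lcf-lcss}.

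For the morphism side, \cref{l:cont-semilattices}, specialized to locally compact frames (which are in particular continuous semilattices), yields a natural bijection between preframe homomorphisms $\O_2 \to \O_1$ and Galois morphisms $(\ScottOpFilt(\O_1), \O_1, \ni) \to (\ScottOpFilt(\O_2), \O_2, \ni)$. Chaining this with the bijection between c-relations and Galois morphisms supplied by \cref{t:MAIN}, we obtain a natural bijection between c-relations $R \colon X \rel Y$ in $\mathsf{LCSob}$ and preframe homomorphisms $\Op(Y) \to \Op(X)$ in $\mathsf{LCFrm}$; concretely, $R$ should correspond to $R^{\leftarrow}[-]$, matching the classical specialization whereby a continuous map $f \colon X \to Y$ gives the preframe homomorphism $f^{-1} \colon \Op(Y) \to \Op(X)$. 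The direction reversal gives the dual equivalence, and the (covariant) equivalence follows by reading the arrows of $\mathsf{LCFrm}$ in the opposite convention.

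The main obstacle, insofar as there is one, is verifying that the chain of bijections on hom-sets is compatible with composition, so that one obtains an actual equivalence of categories rather than just a bijection of hom-sets. This is essentially routine, however, because each constituent assignment (ko-space to bi-dcpo as in \cref{t:MAIN}, the bi-dcpo isomorphism from \cref{t:HofMis}, and the bijection of \cref{l:cont-semilattices}) is manifestly natural, so composites match up automatically.
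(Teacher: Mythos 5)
Your proof is correct and follows essentially the same route as the paper: the object-level identification is exactly \cref{t:lcf-lcss} (whose proof already contains your bicontinuity argument and the appeal to \cref{t:HofMis}), and the morphism-level identification via \cref{l:cont-semilattices} is precisely the one-line justification the paper gives. The only difference is that you spell out the chaining and naturality checks that the paper leaves implicit.
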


\begin{proof}
	For morphisms, it is enough to recall that, in this context, preframe homomorphisms correspond precisely to Galois morphisms (see \cref{l:cont-semilattices}).
\end{proof}

One can then further restrict to frame homomorphisms and continuous functions.

\subsection{Bicontinuous dirspaces}
\label{s:dirspaces}

We have dealt with ko-spaces, which are sets equipped with two families $\K$ and $\O$ of subsets. To what degree can we get rid of one of the two classes (say $\K$)?

Local compactness (more precisely, bicontinuity) guarantees that not only compact saturated sets are definable from open sets, but also open sets are definable from compact saturated sets.
In particular, we will now show that a bicontinuous ko-space $(X, \K, \O)$ is determined by $(X, \O)$, and characterize the structures of the form $(X, \O)$ arising in this way (\cref{t:lcf-lcss-weak}).
Because of the symmetry of bicontinuous ko-spaces, these structures will provide a solution to the problem mentioned in the introduction of obtaining the two Wilker's conditions at the price of one.

\begin{definition}[Dirspace]
	A \emph{dirspace} $(X, \O)$ is a set $X$ equipped with a set $\O$ of subsets closed under directed unions.
\end{definition}

The elements of $\O$ are said to be the \emph{open} subsets of $X$.

The prefix ``dir'' stands for ``directed'', which refers to the fact that the set of open sets is required to be closed under directed unions.

A special case of a dirspace is a \emph{prespace} \cite[Sec.~2]{Erne2007}, i.e., a set equipped with a set of subsets closed under directed unions and finite intersections.

\begin{definition}[Compact, saturated]
	Let $(X, \O)$ be a dirspace and $S$ a subset of $X$.
	\begin{enumerate}
		\item
		$S$ is \emph{compact} if for every directed subset $\I$ of $\O$ with $S \subseteq \bigcup \I$ there is $U \in \I$ with $S \subseteq U$.
		
		\item
		$S$ is \emph{saturated} if it is the codirected intersection of $\{U \in \O \mid S \subseteq U\}$.
	\end{enumerate}
	We denote by $\KSat(X, \O)$ the set of compact saturated subsets of $X$.
\end{definition}

\begin{remark}\label{r:Lawson-dual}
	A subset $S$ of a dirspace $(X, \O)$ is compact and saturated if and only if $\{U \in \O \mid S \subseteq U\}$ is a Scott-open filter of $\O$ and $S$ is its intersection.
\end{remark}

\begin{remark}[Saturated vs intersection of opens vs upset]
	The following implications hold for any subset of a dirspace:
	\begin{center}
		saturated $\Longrightarrow$ intersection of open sets $\Longrightarrow$ upset,
	\end{center}
	where ``upset'' refers to the specialization preorder, defined as follows: $x \leq y$ if and only if every open containing $x$ contains $y$.
	The converses of both implications fail in general.\footnote{A counterexample to the converse of the first implication is the empty dirspace with no open subsets: the empty set is the intersection of the empty family of open sets but is not saturated.
		A counterexample to the converse of the second implication is a singleton dirspace where the only open set is the whole set. (This is even a locally compact well-filtered $T_0$ prespace.) The empty set is an upset that is not an intersection of open sets.}

	For prespaces, we have
	\begin{center}
		saturated $\Longleftrightarrow$ intersection of open sets $\Longrightarrow$ upset.
	\end{center}	
	In fact ``intersection of open sets'' is the definition of ``saturated'' in \cite[Sec.~2]{Erne2007}.
	So, our definition is compatible with Ern\'e's one in the context of prespaces.
	The converse of the last implication fails in general.\footnote{We can use the same counterexample as before: a singleton where the only open set is the whole set.}
	
	Finally, for topological spaces, all conditions are equivalent:
	\begin{center}
		saturated $\Longleftrightarrow$ intersection of open sets $\Longleftrightarrow$ upset.
	\end{center}
	So our definition agrees also with the standard terminology for topological spaces.
	
	Our motivation for our definition of saturated set is that (besides agreeing with the already existing topological and pretopological terminology) this is the condition appearing in \cref{c:double-duals-pointfree,c:HM-III} above (while the other ones would not work, see \cref{r:necessary-dir-codir}).
\end{remark}

\begin{definition}[Well-filtered, locally compact, bicontinuous]
	A dirspace $(X, \O)$ is...
	\begin{enumerate}
	
		\item
		...\emph{well-filtered} if for every codirected set $\F$ of compact saturated sets and every $U \in \O$ with $\bigcap \F \subseteq U$ there is $K \in \F$ with $K \subseteq U$;
		\item
		...\emph{locally compact}\footnote{(Comparison with the definition of local compactness for prespaces in \cite{Erne2007}.)
		The definition of local compactness given in \cite{Erne2007} includes also the condition that every open set is a directed union of the compact saturated sets it contains; we include this important condition in the notion of ``bicontinuity''. Our definition and Erné's one are equivalent when restricted to the topological setting.} if, for every compact saturated set $K$ and open set $U$ with $K \subseteq U$, there are an open set $U'$ and a compact saturated set $K'$ with $K \subseteq U' \subseteq K' \subseteq U$.
		
		\item
		...\emph{bicontinuous} if it is well-filtered and locally compact, and every open set $U$ is a directed union of the compact saturated sets contained in $U$.
	
	\end{enumerate}

\end{definition}

\begin{definition}[$T_0$]
	A dirspace $(X, \O)$ is said to be $T_0$ if for all distinct $x,y \in X$ there is $U \in \O$ to which exactly one of $x$ and $y$ belongs.
\end{definition}

\begin{theorem} \label{t:lcf-lcss-weak}
	There is a one-to-one correspondence between
	\begin{enumerate}
		
		\item \label{i:loccomp-sob-dps-weak}
		bicontinuous ko-spaces;
		
		\item \label{i:locally-compact-sober-space-weak}
		$T_0$ bicontinuous dirspaces where
		\begin{enumerate}
			
			\item \label{i:el-open-codirected}
			for every element $x$ the set of open sets containing $x$ is codirected,
			
			\item \label{i:el-open-avoid}
			for every element $x$ there is a largest open set not containing $x$.
			
		\end{enumerate}
		
	\end{enumerate}
\end{theorem}

\begin{proof}
	The assignment \eqref{i:locally-compact-sober-space-weak}$\rightarrow$\eqref{i:loccomp-sob-dps-weak} maps $(X, \O)$ to $(X, \KSat(X, \O), \O)$, where $X$ is equipped with the specialization order.
	We show that this assignment is well-defined.
	Since $X$ is $T_0$, the specialization preorder is a partial order.
	All open sets and all compact saturated sets are intersections of open sets and hence are upsets.
	$\O$ is closed under directed unions by definition of dirspace.
	Since $X$ is well-filtered, $\KSat(X, \O)$ is closed under codirected intersections.
	The condition \eqref{i:compactness-ps} in \cref{d:ko-space} (compactness of k-sets) holds by definition of compact sets, and the condition \eqref{i:cocompactness-ps} in \cref{d:ko-space} holds since $X$ is well-filtered.
	For every $x \in X$, the set $\u x$ is clearly compact.
	We prove that it is saturated.
	The set $\{U \in \O \mid \u x \subseteq U\}$ equals $\{U \in \O \mid x \in U\}$ by definition of the specialization order.
	The set $\{U \in \O \mid x \in U\}$ is codirected by \eqref{i:el-open-codirected}; moreover, its intersection is $\u x$ by definition of the specialization order.
	Moreover, for every $x \in X$, by definition of the specialization order the set $X \setminus \d x$ is the union of all the open sets to which $x$ does not belong.
	Since by \eqref{i:el-open-avoid} there is a largest open set not containing $x$, it is $X \setminus \d x$.
	This proves that $(X, \KSat(X, \O), \O)$ is a ko-space.
	Let us prove that it is bicontinuous.
	The ko-space $(X, \KSat(X, \O), \O)$ is locally compact because $X$ is locally compact.
	For every compact saturated set, the set of open sets containing it is codirected by definition of ``saturated''.
	For every open set, the set of compact saturated sets contained in it is directed by definition of bicontinuous dirspace.
	This proves that the ko-space $(X, \KSat(X, \O), \O)$ is bicontinuous.
	
	The assignment \eqref{i:loccomp-sob-dps-weak}$\rightarrow$\eqref{i:locally-compact-sober-space-weak} maps a bicontinuous ko-space $(X, \K, \O)$ to $(X, \O)$. By \cref{c:HM-III}, $\K$ is precisely the set of compact saturated subsets of $(X, \O)$. Therefore, $(X, \O)$ is locally compact and well-filtered. Moreover, the order on the poset $X$ of the ko-space $(X, \K, \O)$ is the specialization order of $(X, \O)$ by the equivalence \eqref{eq:order} at p.~\pageref{eq:order}; therefore, $(X, \O)$ is $T_0$.
	Finally, for every $x \in X$ the set $X \setminus \d x$ is the largest element of $\O$ not containing $x$.
	Moreover, since $\u x \in \K$ and $(X, \K, \O)$ is bicontinuous, the set $\{U \in \O \mid \u x \subseteq U\}$, which equals $\{U \in \O \mid x \in U\}$, is codirected.
	Furthermore, since $(X, \K, \O)$ is bicontinuous, for every $U \in \O$ the set $\{K \in \K \mid K \subseteq U\}$ is directed, and, by \cref{c:HM-III}\eqref{i:O-is-cocompact-saturated}, $U=\bigcup\{K\in\mathcal K\mid K\subseteq U\}$.
	
	The fact that the assignment \eqref{i:locally-compact-sober-space-weak}$\rightarrow$\eqref{i:loccomp-sob-dps-weak}$\rightarrow$\eqref{i:locally-compact-sober-space-weak} is the identity is immediate.
	The assignment \eqref{i:loccomp-sob-dps-weak}$\rightarrow$\eqref{i:locally-compact-sober-space-weak}$\rightarrow$\eqref{i:loccomp-sob-dps-weak} is the identity by \cref{c:HM-III}.
\end{proof}

Because of the symmetry of bicontinuous ko-spaces, the class of dirspaces in \cref{t:lcf-lcss-weak}\eqref{i:locally-compact-sober-space-weak} enjoys a symmetry, which we still call de Groot duality.

\begin{definition}[De Groot dual]\hfill
	\begin{enumerate}
	
		\item
		The \emph{de Groot dual} $(X, \O)^\partial$ of a dirspace $(X, \O)$ is the pair
		\[
		(X, \{X \setminus K \mid K \text{ compact and saturated}\}).
		\]
		
		\item
		We say that a dirspace $(X, \O)$ \emph{has de Groot duality}\footnote{It remains unclear to us the exact relationship between this notion and a similar notion of ``having duality'' for topological spaces, which postulates that the posets of open sets and of compact saturated sets are Lawson dual, and which was investigated for example in \cite[IV-2.18]{GierzHofmannEtAl2003}, \cite{HofmannLawson1984} and \cite[Cor.~8.13]{EscardoLawsonEtAl2004}.} if its de Groot dual $(X, \O)^\partial$ is a dirspace (i.e., if compact saturated sets are closed under codirected intersections) and 
		\[
		(X, \O)^{\partial\partial} = (X, \O).
		\]
	\end{enumerate}
\end{definition}

\begin{remark}[Characterization of ``having de Groot duality'']
	A dirspace $(X, \O)$ has de Groot duality if and only if
	\begin{enumerate}
		
		\item
		it is well-filtered,
		
		\item
		every $U \in \O$ is the directed union of the compact saturated sets contained in it,
		
		\item
		if a subset $S$ of $X$ is the directed union of the compact saturated sets contained in it, and for every codirected subset $\F$ of $\KSat(X,\mathcal O)$ with $\bigcap \F \subseteq S$ there is $K \in \F$ with $K \subseteq S$, then $S$ is open.
		
	\end{enumerate}
\end{remark}

\begin{remark}[Hausdorff space $\not\Rightarrow$ with de Groot duality]
	While all well-filtered $T_0$ topological spaces can be treated via ko-spaces (see \cref{l:well-filtered-spaces}), not all of them are dirspaces with de Groot duality.
	In fact, even a Hausdorff space may fail to coincide with its double de Groot dual.\footnote{Examples are the Arens-Fort space \cite[Part II, 26]{SteenSeebach1978} and any Fortissimo space \cite[Part II, 25]{SteenSeebach1978}. These are non-discrete Hausdorff spaces with the property that the compact subsets are precisely the finite ones, which implies that their double de Groot dual is discrete.}
\end{remark}

\begin{remark}
	De Groot duality is a well-defined involution of the class of dirspaces in \cref{t:lcf-lcss-weak}\eqref{i:locally-compact-sober-space-weak}. This follows from the fact that de Groot duality for ko-spaces is a well-defined involution on the class of bicontinuous ko-spaces.
\end{remark}

De Groot duality is a well-defined involution also on an interesting larger class (which we do not identify with any class of ko-spaces):

\begin{theorem} \label{t:de-groot-is-involution}
	De Groot duality is a well-defined involution on the class of bicontinuous dirspaces.
\end{theorem}

\begin{proof}
	Let $(X, \O)$ be a bicontinuous dirspace.
	Set $\K \coloneqq \KSat(X, \O)$.
	Since $(X, \O)$ is well-filtered, $(X, \{X \setminus K \mid K \in \K\})$ is a dirspace.
	It is easy to see that the triple $( \K, \O, \subseteq)$ is a bicontinuous bi-dcpo.
	Therefore, by \cref{t:HofMis}\eqref{i:HM-2}, the assignment $U \mapsto \{ K \in  \K \mid K \subseteq U\}$ is a bijection from $\O$ to the set of Scott-open filters of $\K^\op$.
	Therefore, the assignment $U \mapsto \bigcap\{X\setminus K\mid K\in\K,\ K\subseteq U\} = X \setminus U$ is a bijection from $\O$ to the set of compact saturated subsets of $(X, \{X \setminus K \mid K \in  \K\})$.
	Therefore, $(X, \O)$ has de Groot duality.
	It is easily seen that also $(X, \{X \setminus K \mid K \in  \K\})$ is a locally compact well-filtered dirspace in which every open set is the directed union of the compact saturated sets contained in it.
\end{proof}

In the introduction we expressed the desire for a setting that is more general than locally compact Hausdorff spaces and that has a symmetry that allows one to prove only one of the two Wilker's conditions \eqref{i:W1} and \eqref{i:W2}. Both the class in \cref{t:lcf-lcss-weak} and the class in \cref{t:de-groot-is-involution} present one such setting.\footnote{One can get a solution already using prespaces. Indeed, de Groot duality is a well-defined involution on the dirspaces in \cref{t:lcf-lcss-weak}\eqref{i:locally-compact-sober-space-weak} that are prespaces. (For prespaces, \eqref{i:el-open-codirected} is superfluous.) Then \eqref{i:W1} (with ``compact saturated'' instead of ``compact'') holds in every such prespace in which moreover open sets are closed under binary unions, and \eqref{i:W2} holds in every such prespace in which moreover compact saturated sets are closed under binary intersections; the two statements are then interderivable and they both specialize to locally compact Hausdorff spaces.}
We first obtain the desired result in the pointfree setting, which we will use later and for which we need some notation.

\begin{notation} [$\ll$]\label{n:wb}
	Let $(L, \K, \O)$ be a locally compact embedded bi-dcpo.
	For $a,b \in L$, we write $a \ll b$ if there are $k \in \K$ and $u \in \O$ such that $a \leq k \leq u \leq b$.
\end{notation}

\begin{notation}[{${\doubledownarrow_\O}$, ${\doubleuparrow_\K}$}]
	Let $(\K, \O, \ko)$ be a locally compact bi-dcpo.
	\begin{enumerate}
		
		\item
		For $u \in \O$ such that $\dk u$ is directed, we set 
		\[
		\ddo u \coloneqq \{v \in \O \mid v \ll u\} = \{v \in \O \mid \exists k \in \K \text{ such that } v \ok k \ko u\},
		\]
		where the notation $\ll$ from \cref{n:wb} is used in virtue of the embedding of $\O$ in the concept lattice of $(\K, \O, \ko)$.
		By \cref{l:directed-with-join}\eqref{i:way-below-equiv}, $\ddo u$ is the set of elements way below $u$ in the dcpo $\O$.
		
		\item
		For $k \in \K$ such that $\uo k$ is codirected, we set 
		\[
		\uuk k \coloneqq \{l \in \K \mid k \ll l\} = \{l \in \K \mid \exists u \in \O \text{ such that } k \ko u \ok l\}.
		\]
	\end{enumerate}
	
\end{notation}

\begin{lemma}[Wilker's condition, pointfree] \label{l:Wilker}
	Let $(\K, \O, \ko)$ be a locally compact bi-dcpo.
	\begin{enumerate}
		
		\item \label{i:Wilker1}
		Suppose that $\O$ has binary joins, let $u,v \in \O$ and $k \in \K$ with $k \ko u \lor v$, and suppose that the set $\dk u$ is directed. Then there is $u' \in \ddo u$ such that $k \ko u' \lor v$. 
		
		\item \label{i:Wilker2}
		Suppose that $\K$ has binary meets, let $k,l \in \K$ and $u \in \O$ with $k \land l \ko u$, and suppose that the set $\uo k$ is codirected. Then there is $k' \in \uuk k$ such that $k' \land l \ko u$.
		
	\end{enumerate} 
	
\end{lemma}

\begin{proof}
	\eqref{i:Wilker1}.
	By \cref{l:directed-with-join}\eqref{i:directed-with-join}, the set $\{w \in \O \mid w \ll u\}$ is directed with join $u$.
	Therefore, in the associated embedded bi-dcpo, 
	\[
	k \leq u \lor v = \mleft(\bigvee_{u' \ll u} u'\mright) \lor v = \bigvee_{u' \ll u} (u' \lor v).
	\]
	Since this join is directed, by double compactness there is $u' \ll u$ with $k \leq u' \lor v$.
	
	\eqref{i:Wilker2}. This is the dual of \eqref{i:Wilker1}.
\end{proof}

\begin{theorem}[Wilker's conditions for bicontinuous dirspaces] \label{t:wilker-directed}
	Let $(X, \O)$ be a bicontinuous dirspace.
	\begin{enumerate}
		
		\item \label{i:dir-spaces-Wilker1}
		Suppose that open sets are closed under binary unions.
		Let $U_1, U_2$ be open sets and $K$ a compact saturated set with $K \subseteq U_1 \cup U_2$.
		There are compact saturated sets $L_1, L_2$ such that $L_1 \subseteq U_1$, $L_2 \subseteq U_2$ and $K \subseteq L_1 \cup L_2$.

		\item \label{i:dir-spaces-Wilker2}
		Suppose that compact saturated sets are closed under binary intersections.
		Let $K_1, K_2$ be compact saturated sets and $U$ an open set with $K_1 \cap K_2 \subseteq U$.
		There are open sets $V_1, V_2$ such that $K_1 \subseteq V_1$, $K_2 \subseteq V_2$ and $V_1 \cap V_2 \subseteq U$.
	\end{enumerate}
	
	\noindent\begin{minipage}{0.5\textwidth}
		\begin{center}
			
			\begin{tikzpicture}
				
				\draw[thick, black] (-0.8,0.1) circle (1.2);
				\node[black] at (-1.9,1.2) {$U_1$};
				
				\draw[thick, gray, dashed] (-1.3,-0.5) rectangle (0.09,0.7);
				\node[gray] at (-1.65,0.15) {$\exists L_1$};
				
				\draw[thick, black] (0.8,-0.1) circle (1.2);
				\node[black] at (1.9,-1.2) {$U_2$};
				
				\draw[thick, gray, dashed] (-0.09,-0.7) rectangle (1.3,0.5);
				\node[gray] at (1.65,-0.15) {$\exists L_2$};
				
				\draw[thick, NavyBlue] (-0.9,-0.4) rectangle (0.9,0.4);
				\node[NavyBlue] at (1.07,0.3) {$K$};

			\end{tikzpicture}
			
		\end{center}
	\end{minipage}%
	\begin{minipage}{0.5\textwidth}
		\begin{center}
			\begin{tikzpicture}
				
				\draw[thick, dashed, gray] (-0.8,0.1) circle (1.2);
				\node[gray] at (-1.9,1.2) {$\exists V_1$};
				
				\draw[thick, black] (-1.3,-0.5) rectangle (0.09,0.7);
				\node[black] at (-1.65,0.15) {$K_1$};

				\draw[thick, dashed, gray] (0.8,-0.1) circle (1.2);
				\node[gray] at (1.9,-1.2) {$\exists V_2$};
				
				\draw[thick, black] (-0.09,-0.7) rectangle (1.3,0.5);
				\node[black] at (1.65,-0.15) {$K_2$};
				
				\draw[thick, NavyBlue] (0,0) circle (1);
				\node[NavyBlue] at (0.2,1.3) {$U$};

			\end{tikzpicture}
		\end{center}
	\end{minipage}
	
\end{theorem}

\begin{proof}
	\eqref{i:dir-spaces-Wilker1} holds by \cref{l:Wilker}. \eqref{i:dir-spaces-Wilker2} holds by \eqref{i:dir-spaces-Wilker1} and \cref{t:de-groot-is-involution}.
\end{proof}

Note that in \cref{t:wilker-directed} we deduced the second statement from the first one in virtue of de Groot duality.
Specializing \cref{t:wilker-directed} to the topological $T_0$ setting gives the following, which when further specialized to the Hausdorff setting gives \eqref{i:W1} and \eqref{i:W2} as stated in the introduction.
We recall that a topological space is locally compact and sober if and only if it is locally compact, well-filtered and $T_0$; see \cite[Prop.~8.3.5]{GoubaultLarrecq2013} for the left-to-right implication and \cite[Prop.~8.3.8]{GoubaultLarrecq2013} for the right-to-left implication.

\begin{corollary}[Wilker's conditions for topological spaces] \hfill
	\begin{enumerate}
		
		\item
		Let $X$ be a locally compact sober space (as, for example, any locally compact Hausdorff space), $U_1, U_2$ open sets and $K$ a compact saturated set with $K \subseteq U_1 \cup U_2$.
		There are compact saturated sets $L_1, L_2$ such that $L_1 \subseteq U_1$, $L_2 \subseteq U_2$ and $K \subseteq L_1 \cup L_2$.

		\item
		Let $X$ be a stably locally compact space\footnote{A \emph{stably locally compact space} is a locally compact sober space in which the intersection of two compact saturated sets is compact (and necessarily saturated); see e.g.\ \cite[Def.~8.3.31]{GoubaultLarrecq2013}.} (as, for example, any locally compact Hausdorff space), $K_1, K_2$ compact saturated sets and $U$ an open set with $K_1 \cap K_2 \subseteq U$.
		There are open sets $V_1, V_2$ such that $K_1 \subseteq V_1$, $K_2 \subseteq V_2$ and $V_1 \cap V_2 \subseteq U$.
	\end{enumerate}
\end{corollary}

\appendix

\section{Meet- and join- preservation properties of basic functions}
In this section, we collect some elementary meet- and join- preservation properties of the basic functions of a polarity.

\subsection{Meets of k-elements, joins of o-elements}

\begin{lemma} \label{l:preservation-of-meets-and-joins}
	Let $(\K, \O, \ko)$ be a purified polarity.
	\begin{enumerate}
	
		\item \label{i:basic-K-pres-meets}
		The basic function $\iota_\K \colon \K \hookrightarrow \mathfrak{B}(\K, \O, \ko)$ preserves all existing meets.
		
		\item \label{i:basic-O-pres-joins}
		The basic function $\iota_\O \colon \O \hookrightarrow \mathfrak{B}(\K, \O, \ko)$ preserves all existing joins.
	
	\end{enumerate}
\end{lemma}

\begin{proof}
	\eqref{i:basic-K-pres-meets}.
	Let $F$ be a subset of $\K$ with a meet in $\K$, and let us prove $\bigwedge \iota_\K[F] = \iota_\K(\bigwedge F)$.
	The inequality $\bigwedge \iota_\K[F]  \geq \iota_\K(\bigwedge F)$ follows from the fact that $\iota_\K$ is order-preserving.
	Let us prove the converse inequality.
	The element $\bigwedge \iota_\K[F]$ is a join of elements in $\im(\iota_\K)$; let then $I$ be a subset of $\K$ such that $\bigwedge \iota_\K[F] = \bigvee \iota_\K[I]$.
	For every $x \in I$, $x$ is below every element of $F$ (since $\iota_\K$ is order-reflecting) and hence below $\bigwedge F$; therefore, since $\iota_\K$ is order-preserving, $\iota_\K(x) \leq \iota_\K(\bigwedge F)$.
	It follows that $\bigvee \iota_\K[I] \leq \iota_\K(\bigwedge F)$.
	Therefore, $\bigwedge \iota_\K[F] = \bigvee \iota_\K[I] \leq \iota_\K(\bigwedge F)$. 
	This proves that $\iota_\K$ preserves all existing meets.
	
	\eqref{i:basic-O-pres-joins}. This is dual to \eqref{i:basic-K-pres-meets}.
\end{proof}

In terms of double base lattices, \cref{l:preservation-of-meets-and-joins} entails the following.

\begin{lemma} \label{l:the-simple-closure}
	
	Let $(L, \K, \O)$ be a double base lattice.
	\begin{enumerate}
		
		\item \label{i:K-closed-under-meets}
		$\K$ is closed under finite meets in $L$ if and only if the poset $\K$ has finite meets.
	
		\item \label{i:O-closed-under-joins}
		$\O$ is closed under finite joins in $L$ if and only if the poset $\O$ has finite joins.
		
	\end{enumerate}
\end{lemma}

\begin{proof}
	
	\eqref{i:K-closed-under-meets}. The left-to-right implication is obvious, and the right-to-left one follows from the fact that the inclusion $\K \hookrightarrow L$ preserves all existing meets (\cref{l:preservation-of-meets-and-joins}).

	\eqref{i:O-closed-under-joins}.  This is dual to \eqref{i:K-closed-under-meets}.
\end{proof}

\subsection{Joins of k-elements, meets of o-elements}

\begin{lemma} \label{l:preserve}
	Let $(\K, \O, \ko)$ be a purified polarity.
	\begin{enumerate}
	
		\item \label{i:preserve-finite-joins}
		If for every $u \in \O$ the set $\dk u$ is directed, then the basic function $\iota_\K \colon \K \hookrightarrow \B(\K, \O, \ko)$ preserves all existing finite joins.
	
		\item \label{i:preserve-finite-meets}
		If for every $k \in \K$ the set $\uo k$ is codirected, then the basic function $\iota_\O \colon \O \hookrightarrow \B(\K, \O, \ko)$ preserves all existing finite meets.

	\end{enumerate}
	
\end{lemma}

\begin{proof}
	\eqref{i:preserve-finite-joins}.
	Let $A$ be a finite subset of $\K$ admitting a join.
	The inequality $\bigvee \iota_\K[A] \leq \iota_\K(\bigvee A)$ is clear; it remains to prove $\iota_\K(\bigvee A) \leq \bigvee \iota_\K[A]$.
	The element $\bigvee \iota_\K[A]$ is the meet of all the upper bounds of $\iota_\K[A]$ belonging to $\im(\iota_\O)$, so it is enough to prove that $\iota_\K(\bigvee A)$ is below any such element.
	Let $u \in \O$ be such that for every $l \in A$ we have $l \ko u$. 
	Since $\dk u$ is directed and $A$ is finite, there is an upper bound $j \in \K$ of $A$ such that $j \ko u$.
	Then, $ \bigvee A \leq j$.
	Therefore, $\bigvee A \ko u$, and hence 
	$\iota_\K(\bigvee A) \leq \iota_\O(u)$.
	
	\eqref{i:preserve-finite-meets}.
	This is dual to \eqref{i:preserve-finite-joins}.
\end{proof}

In terms of double base lattices, \cref{l:preserve} amounts to the following.

\begin{lemma}\label{l:closed-under}
	Let $(L, \K, \O)$ be a double base lattice.
	\begin{enumerate}
	
		\item \label{i:K-closed-under-joins}
		$\K$ is closed under finite joins in $L$ if and only if the poset $\K$ has finite joins and, for every $u \in \O$, the set $\dk u$ is directed.

		\item \label{i:O-closed-under-meets}
		$\O$ is closed under finite meets in $L$ if and only if the poset $\O$ has finite meets and, for every $k \in \K$, the set $\uo k$ is codirected.
			
	\end{enumerate}
\end{lemma}
\begin{proof}
	 \eqref{i:K-closed-under-joins}. The left-to-right implication is obvious, and the right-to-left one follows from \cref{l:preserve}.
	
	\eqref{i:O-closed-under-meets}. This is dual to \eqref{i:K-closed-under-joins}.
\end{proof}

\begin{remark} \label{r:preserve-binary}
	A slight adjustment of the proof of \cref{l:preserve} shows that (i) if for all $k \in \K$ the set $\uo k$ has a lower bound for each pair of its elements, then the basic function $\iota_\O \colon \O \to\B(\K, \O, \ko)$ preserves all existing binary meets, and, dually, (ii) if for all $u \in \O$ the set $\dk u$ has an upper bound for each pair of its elements, then the basic function $\iota_\K \colon \K \to \B(\K, \O, \ko)$ preserves all existing binary joins.
\end{remark}

\makeatletter
\begingroup
\let\addcontentsline\@gobblethree
\section*{Acknowledgments}
The first author would like to thank Drew Moshier for a helpful discussion in the early stages of this project.
We are grateful also to Guram Bezhanishvili, Mai Gehrke and Alexander Kurz for their help on parts of this work. Finally, we thank the referee for their suggestions.

\textit{Funding.}
This work was funded by UK Research and Innovation (UKRI) under the UK government’s Horizon Europe funding guarantee (grant number EP/Y015029/1, Project ``DCPOS''). The ``Horizon Europe guarantee'' scheme provides funding to researchers and innovators who were unable to receive their Horizon Europe funding (in this case, for a Marie Skłodowska-Curie Actions (MSCA) grant) while the UK was in the process of associating.
\endgroup
\makeatother

\bibliography{Biblio}
\bibliographystyle{alpha}
\end{document}